\documentclass[aop,preprint]{imsart}
\usepackage[width=6.4in,height=10in,center]{crop}
\makeatletter
   \def\journal@name{}
   \def\journal@url{}
\makeatother

\RequirePackage[OT1]{fontenc}
\usepackage{mathtools}
\RequirePackage{amsthm,mathtools}
\usepackage{microtype}
\usepackage[pdftitle={The Conformal Loop Ensemble Nesting Field},
            pdfauthor={Jason Miller, Samuel S. Watson, and David B. Wilson},
            bookmarks=true,bookmarksopen=true,bookmarksopenlevel=3,unicode,colorlinks=false]{hyperref}

\numberwithin{equation}{section}
\numberwithin{figure}{section}

\usepackage{etoolbox}
\usepackage{comment}
\usepackage{appendix}
\usepackage{graphicx}
\usepackage{subfigure}
\usepackage{enumerate}
\usepackage{comment}
\usepackage{colonequals}
\usepackage{color}
\usepackage[all]{hypcap}
\usepackage{soul}
\usepackage{bm}
\usepackage{mathpazo_modified}\usepackage{amsfonts}
\usepackage{amssymb}
\usepackage{yhmath}
\usepackage{accents}

\marginparwidth=1.4in

\startlocaldefs

\newtheorem{theorem}{Theorem}[section]
\newtheorem{lemma}[theorem]{Lemma}
\newtheorem{proposition}[theorem]{Proposition}
\newtheorem{corollary}[theorem]{Corollary}

\newcommand{\area}{\operatorname{area}}
\newcommand{\const}{\textrm{const}}
\newcommand{\eps}{\varepsilon}
\newcommand{\rc}[1]{\accentset{\circ}{#1}}
\renewcommand{\P}{\mathbb{P}}

\renewcommand{\C}{\mathbb{C}}
\newcommand{\D}{\mathbb{D}}
\newcommand{\E}{\mathbb{E}}

\newcommand{\N}{\mathbb{N}}
\newcommand{\Z}{\mathbb{Z}}

\newcommand{\R}{\mathbb{R}}
\newcommand{\bound}{a}
\newcommand{\T}{\mathbb{T}}

\newcommand{\Sc}{\mathcal{S}}

\newcommand{\Hloc}{H_{\operatorname{loc}}}

\newcommand{\lp}{\nu} \newcommand{\lptyp}{\lp_{\mathrm{typical}}} \newcommand{\pp}{\lambda}                        \newcommand{\mgfparam}{\lambda}  \newcommand{\gffparam}{\sigma}

\newcommand{\CL}{\mathcal{L}}

\DeclareMathOperator{\dist}{dist}

\DeclareMathOperator{\interior}{int}

\DeclareMathOperator{\cov}{Cov}
\DeclareMathOperator{\Cov}{Cov}
\DeclareMathOperator{\Var}{Var}

\DeclareMathOperator{\supp}{supp}

\DeclareMathOperator{\SLE}{SLE}
\DeclareMathOperator{\CLE}{CLE}
\DeclareMathOperator{\confrad}{CR}

\DeclareMathOperator{\inrad}{inrad}

\newcommand{\Loop}{\mathcal{L}}
\newcommand{\Loopcount}{\mathcal{N}}
\newcommand{\SLoopcount}{\mathcal{S}}

\newcommand{\TLoopsum}{\widetilde{\mathcal{S}}}

\newcommand{\given}{\,|\,}

\newcommand{\one}{{\bf 1}}

\newcommand{\ol}{\overline}

\newcommand{\wh}{\widehat}
\newcommand{\wt}{\widetilde}
\newcommand{\ang}[1]{\langle #1 \rangle}

\newcommand{\arXiv}[1]{\href{http://arxiv.org/abs/#1}{arXiv:#1}}
\renewcommand{\arxiv}[1]{\href{http://arxiv.org/abs/#1}{#1}}

\definecolor{dgreen}{rgb}{0.0,0.5,0.0}

\definecolor{dorange}{rgb}{1.0,0.5,0.0}

\newtoggle{BW}
\togglefalse{BW}

\iftoggle{BW}{

}{

}
\numberwithin{equation}{section}

\usepackage[pagewise,mathlines]{lineno}

\newcommand*\patchAmsMathEnvironmentForLineno[1]{  \expandafter\let\csname old#1\expandafter\endcsname\csname #1\endcsname
  \expandafter\let\csname oldend#1\expandafter\endcsname\csname end#1\endcsname
  \renewenvironment{#1}     {\linenomath\csname old#1\endcsname}     {\csname oldend#1\endcsname\endlinenomath}}\newcommand*\patchBothAmsMathEnvironmentsForLineno[1]{  \patchAmsMathEnvironmentForLineno{#1}  \patchAmsMathEnvironmentForLineno{#1*}}\AtBeginDocument{\patchBothAmsMathEnvironmentsForLineno{equation}\patchBothAmsMathEnvironmentsForLineno{align}\patchBothAmsMathEnvironmentsForLineno{flalign}\patchBothAmsMathEnvironmentsForLineno{alignat}\patchBothAmsMathEnvironmentsForLineno{gather}\patchBothAmsMathEnvironmentsForLineno{multline}}

\parindent 0 pt

\setlength{\parskip}{0.25cm plus1mm minus1mm}

\usepackage{xr}
\usepackage{textgreek}

\externaldocument[I-]{CLE-Extremes}

\thispagestyle{empty}
\begin{document}

\begin{frontmatter}
\vspace*{-64pt}
\title{The~conformal~loop~ensemble nesting field}
  \runtitle{The CLE nesting field}
\begin{aug}
    \runauthor{Jason Miller, Samuel S.~\!Watson, and David B.~\!Wilson}
    \author{Jason Miller, Samuel S.~\!Watson, and David B.~\!Wilson}
\affiliation{Microsoft Research and Massachusetts Institute of Technology}
\end{aug}
\date{}

\makeatletter{}\begin{abstract}
  The conformal loop ensemble $\CLE_\kappa$ with parameter $8/3 < \kappa <
  8$ is the canonical conformally invariant measure on countably infinite
  collections of non-crossing loops in a simply connected domain.  We show
  that the number of loops surrounding an $\eps$-ball (a random function of
  $z$ and $\eps$) minus its expectation converges almost surely as $\eps\to
  0$ to a random conformally invariant limit in the space of distributions,
  which we call the nesting field. We generalize this result by assigning
  i.i.d.\ weights to the loops, and we treat an alternate notion of
  convergence to the nesting field in the case where the weight
  distribution has mean zero. We also establish estimates for moments of
  the number of CLE loops surrounding two given points.
\end{abstract}

\setattribute{keyword}{AMS}{AMS 2010 subject classifications:}
\begin{keyword}[class=AMS]
\kwd[Primary ]{60J67} \kwd{60F10} \kwd[, secondary ]{60D05} \kwd{37A25} \end{keyword}
\begin{keyword}
\kwd{SLE, CLE, conformal loop ensemble, Gaussian free field.}
\end{keyword}

\end{frontmatter}

\maketitle

\thispagestyle{empty}
{\renewcommand{\contentsname}{}
\vspace*{-42pt}
\small \tableofcontents}

\section{Introduction}
\label{sec::introduction}

\enlargethispage{36pt}
The conformal loop ensemble $\CLE_\kappa$ for $\kappa \in (8/3,8)$ is the
canonical conformally invariant measure on countably infinite collections
of non-crossing loops in a simply connected domain \label{not::domD} $D
\subsetneq \C$ \cite{SHE_CLE,SW_CLE}.  It is the loop analogue of
$\SLE_\kappa$, the canonical conformally invariant measure on non-crossing
paths.  Just as $\SLE_\kappa$ arises as the scaling limit of a single
interface in many two-dimensional discrete models, $\CLE_\kappa$ is a
limiting law for the joint distribution of all of the interfaces.
Figures~\ref{fig::On} and~\ref{fig::FK} show two discrete loop models
believed or known to have $\CLE_\kappa$ as a scaling limit.
Figure~\ref{fig::CLE_examples} illustrates these scaling limits for several
values of $\kappa$.

\begin{figure}[b!]
\begin{center}
\subfigure[Site percolation.]{\includegraphics[width=.32\textwidth]{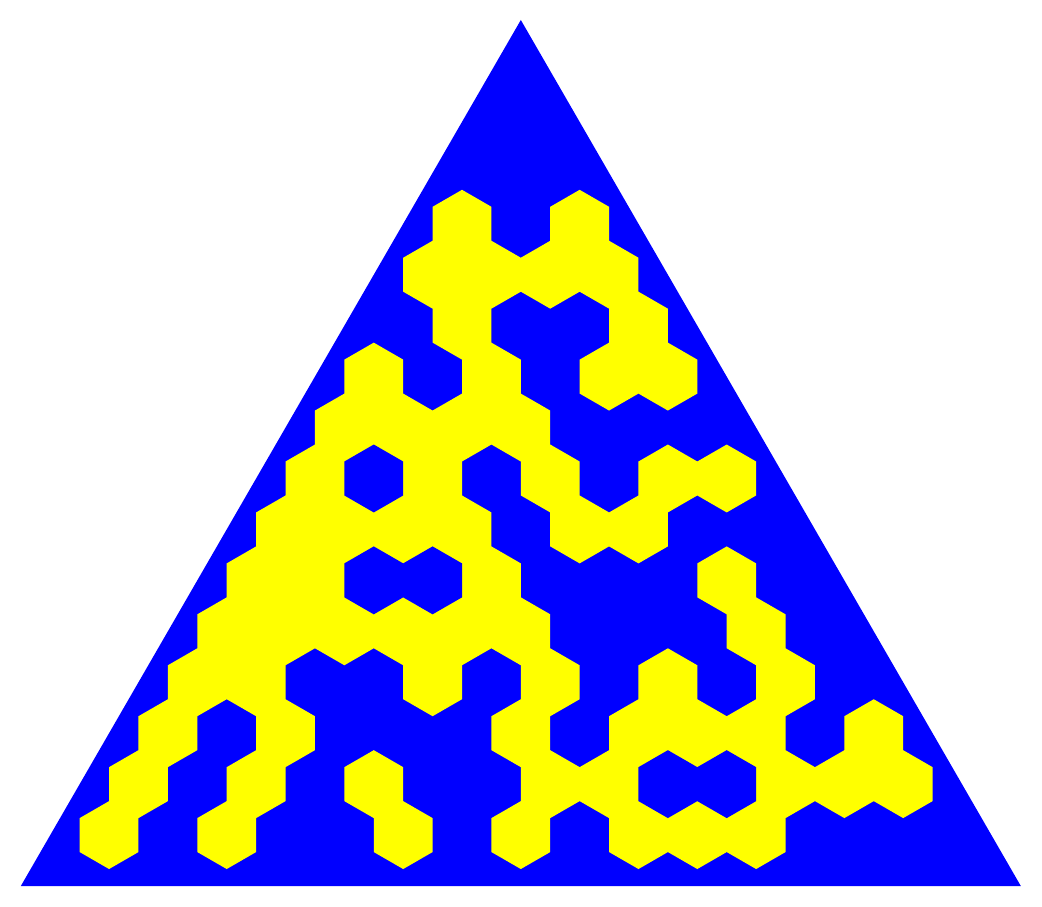}}
\hfill
\subfigure[$O(n)$ loop model.  Percolation corresponds to $n=1$ and $x=1$, which
is in the dense phase.]{\includegraphics[width=.32\textwidth]{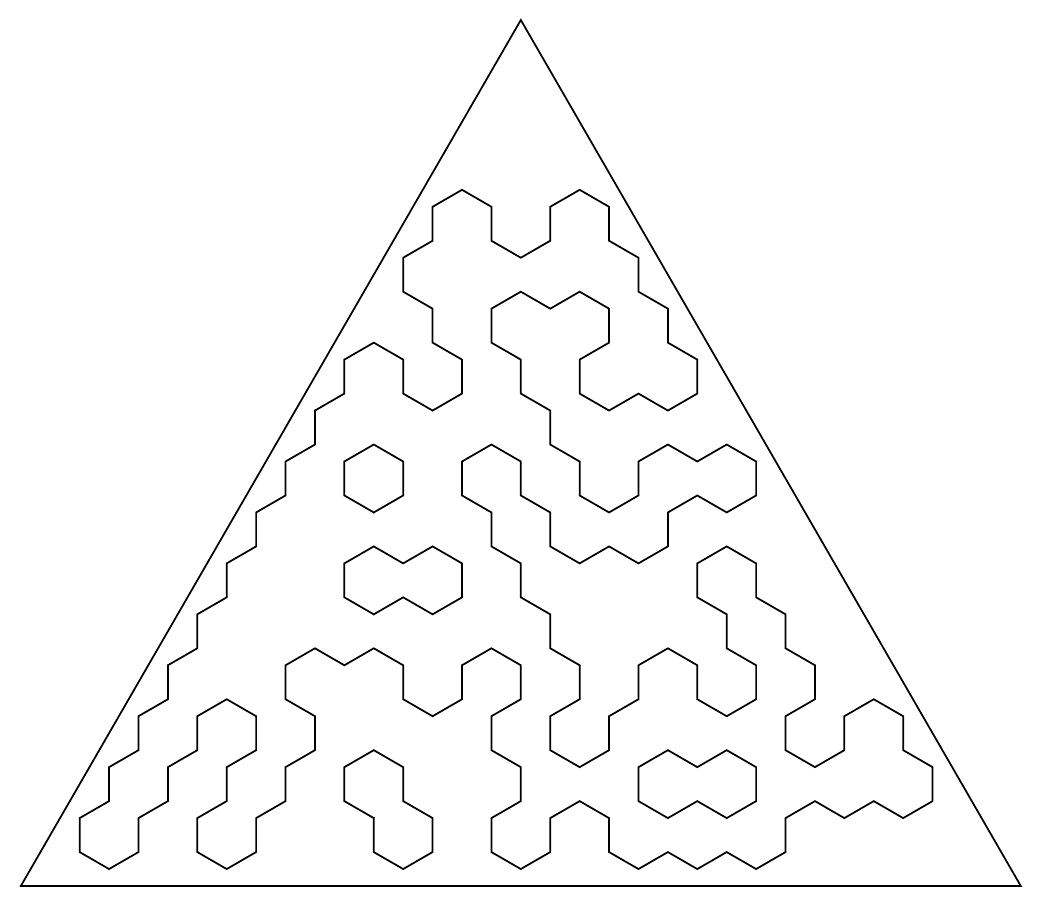}}
\hfill
\subfigure[Area shaded by nesting of loops.]{\includegraphics[width=.32\textwidth]{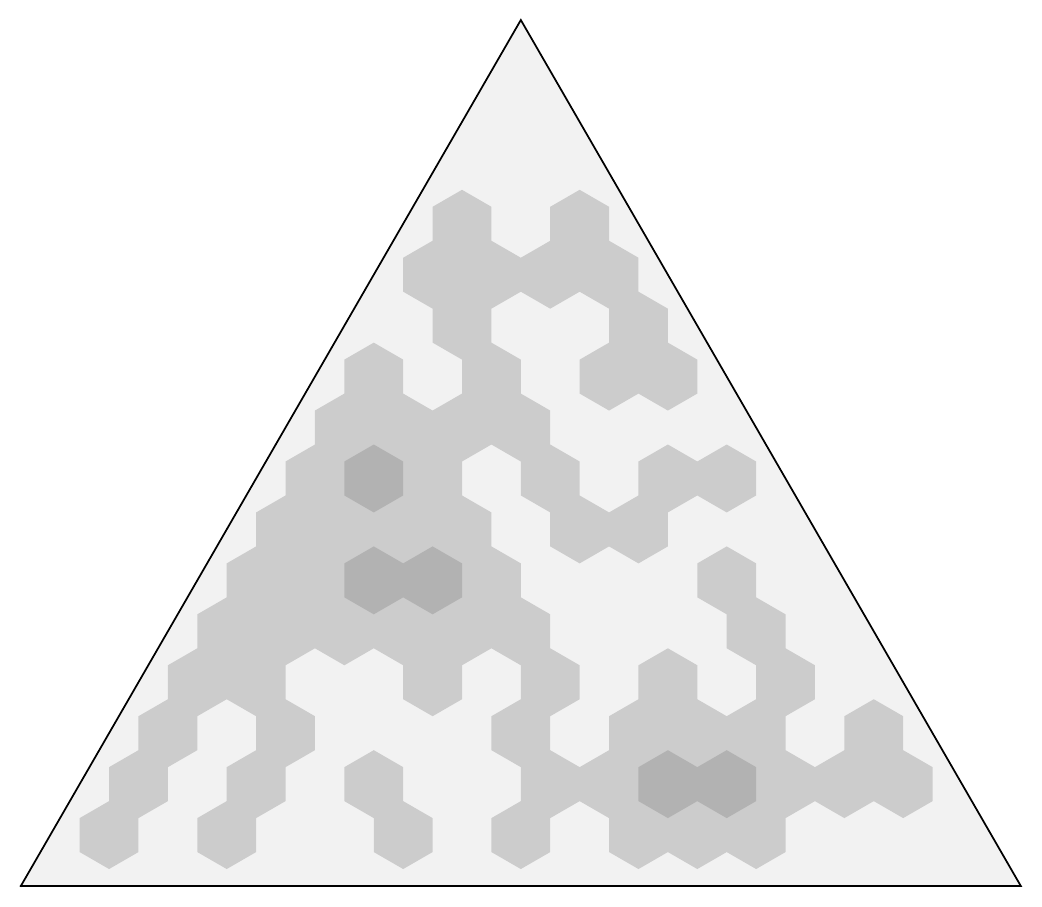}}
\end{center}
\caption{\label{fig::On} Nesting of loops in the $O(n)$ loop model.
  Each $O(n)$ loop configuration has probability proportional to $x^{\text{total length of loops}} \times n^{\text{\# loops}}$.
  For a certain critical value of $x$,
  the $O(n)$ model for $0\leq n\leq 2$ has a ``dilute
  phase'', which is believed to
  converge $\CLE_\kappa$ for $8/3<\kappa\leq 4$ with $n=-2\cos(4\pi/\kappa)$.
  For $x$ above this critical value, the $O(n)$ loop model
  is in a ``dense phase'',
  which is believed to converge to $\CLE_\kappa$ for $4\leq\kappa\leq 8$,
  again with $n=-2\cos(4\pi/\kappa)$.  See \cite{KN04} for further background.}
\end{figure}

\begin{figure}
\begin{center}
\subfigure[Critical FK bond configuration.  Here $q=2$.]{\includegraphics[width=.32\textwidth]{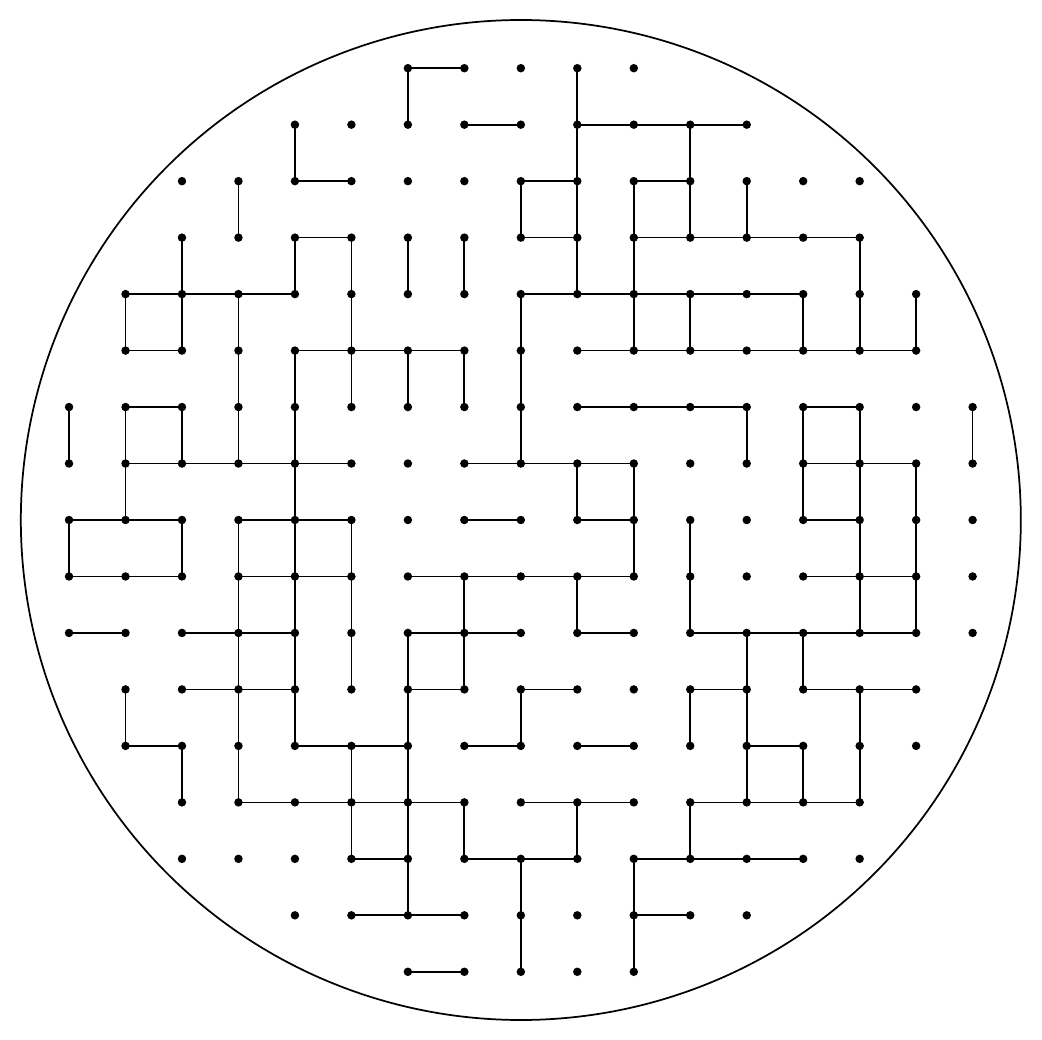}}
\hfill
\subfigure[Loops separating FK clusters from dual clusters.]{\includegraphics[width=.32\textwidth]{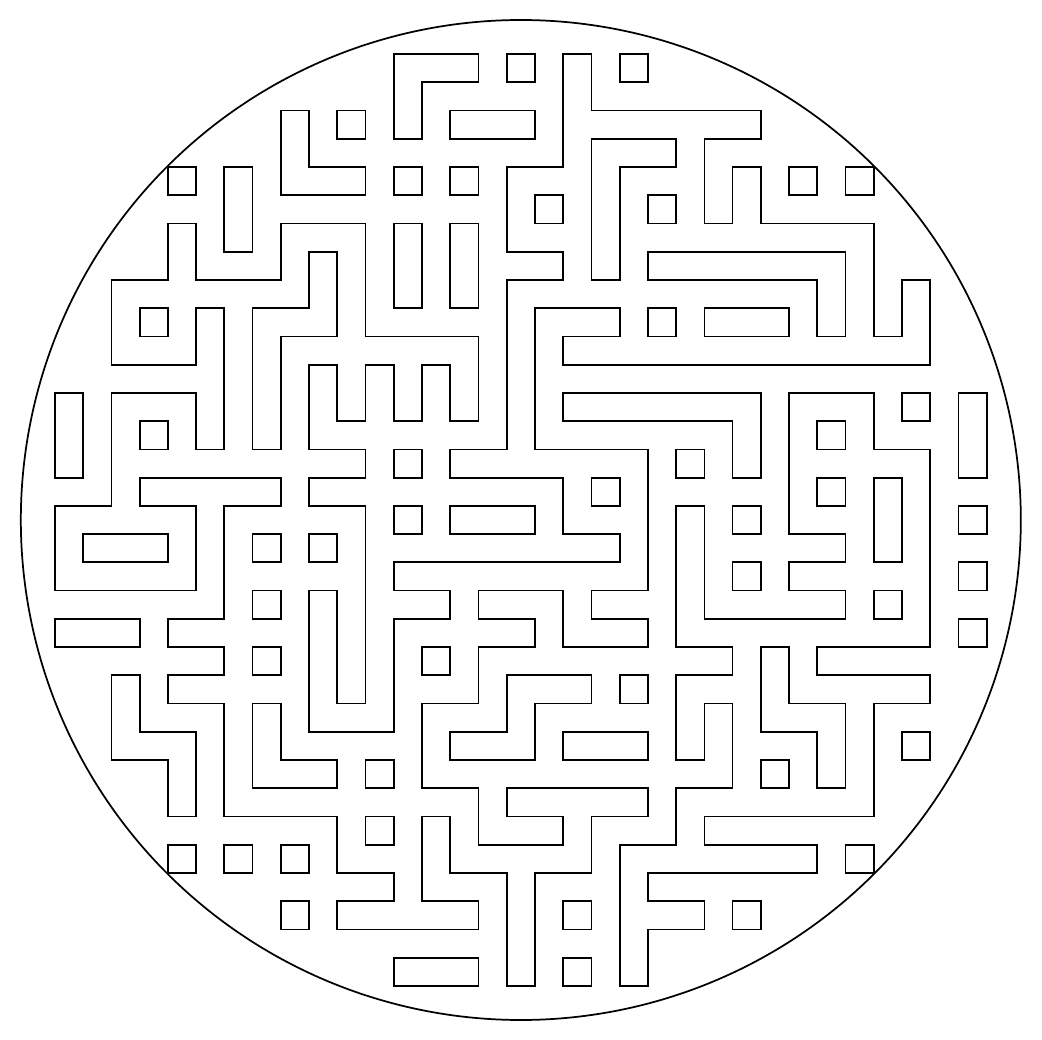}}
\hfill
\subfigure[Area shaded by nesting of loops.]{\includegraphics[width=.32\textwidth]{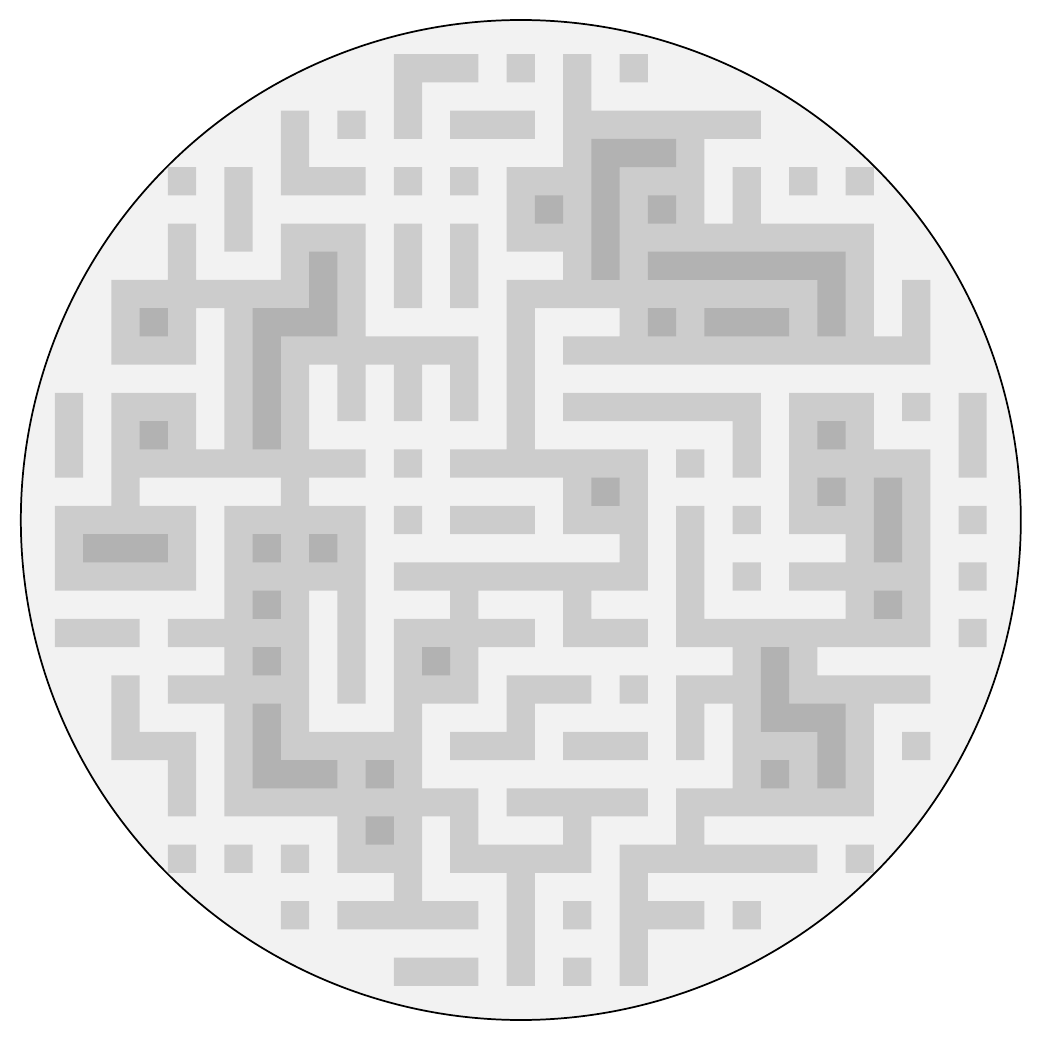}}
\end{center}
\caption{\label{fig::FK} Nesting of loops separating critical
  Fortuin-Kasteleyn (FK) clusters from dual clusters.
  Each FK bond configuration has probability proportional to
  $(p/(1-p))^{\text{\# edges}} \times q^{\text{\# clusters}}$ \cite{FK}, where
  there is believed to be a critical point at $p=1/(1+1/\sqrt{q})$ (proved for $q\geq 1$ \cite{BDC}).
  For $0\leq q\leq 4$, these loops are believed to have the
  same large-scale behavior as the $O(n)$ model
  loops for $n=\sqrt{q}$ in the dense phase, that is, to
  converge to $\CLE_\kappa$ for $4\leq\kappa\leq 8$ (see \cite{RS05,KN04}).}
\end{figure}

\begin{figure}
\begin{center}
\subfigure[$\CLE_3$ (from critical Ising model)]{\includegraphics[width=0.495\textwidth]{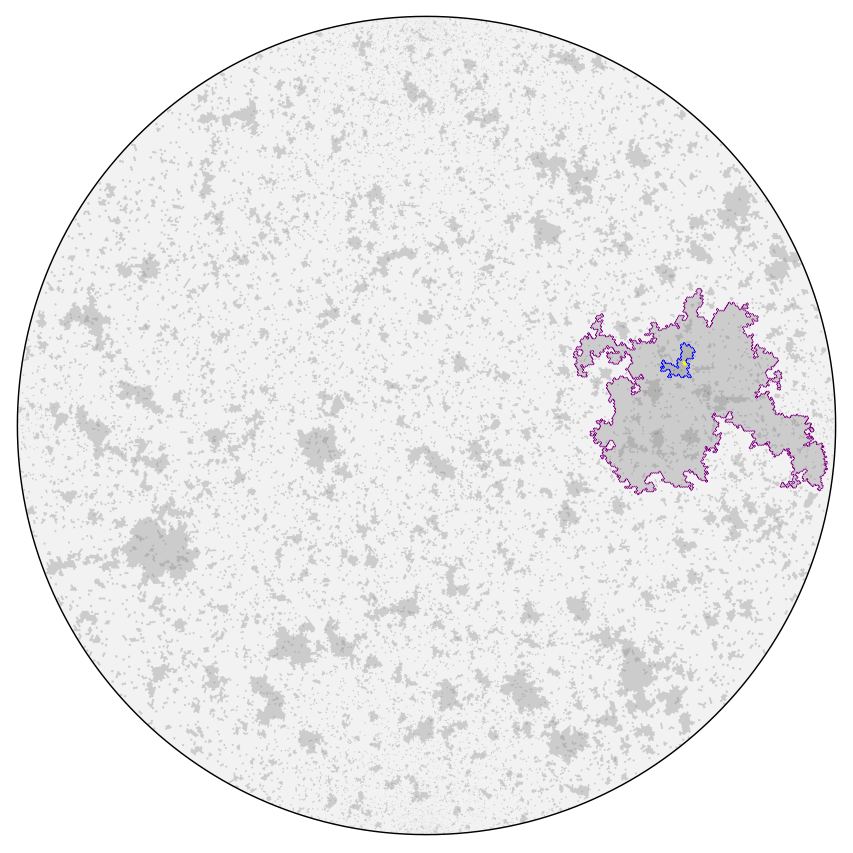}} \hfill
\subfigure[$\CLE_4$ (from the FK model with $q=4$) $\star$]{\includegraphics[width=0.495\textwidth]{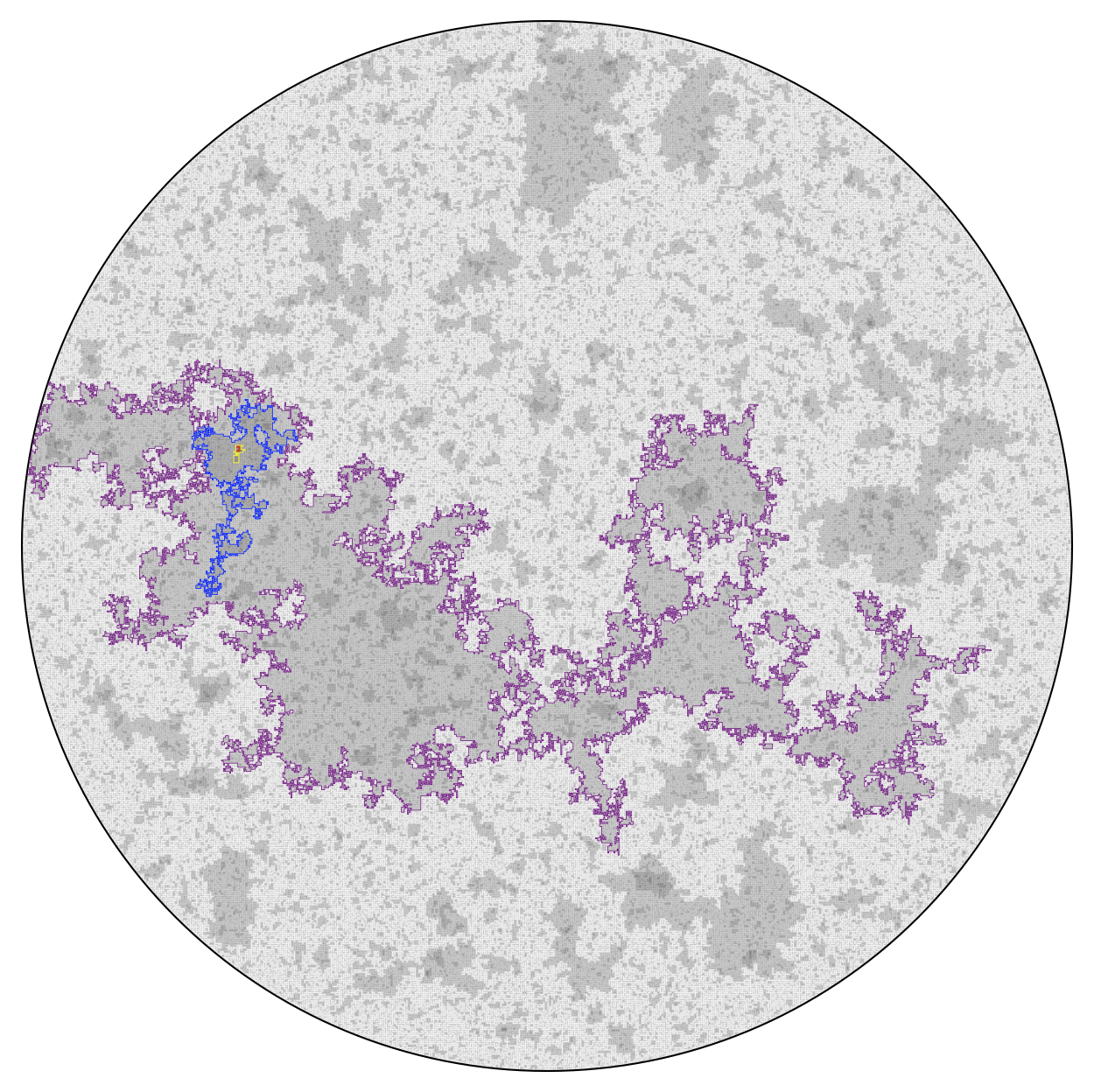}}\\[24pt]
\subfigure[$\CLE_{16/3}$ (from the FK model with $q=2$)]{\includegraphics[width=0.495\textwidth]{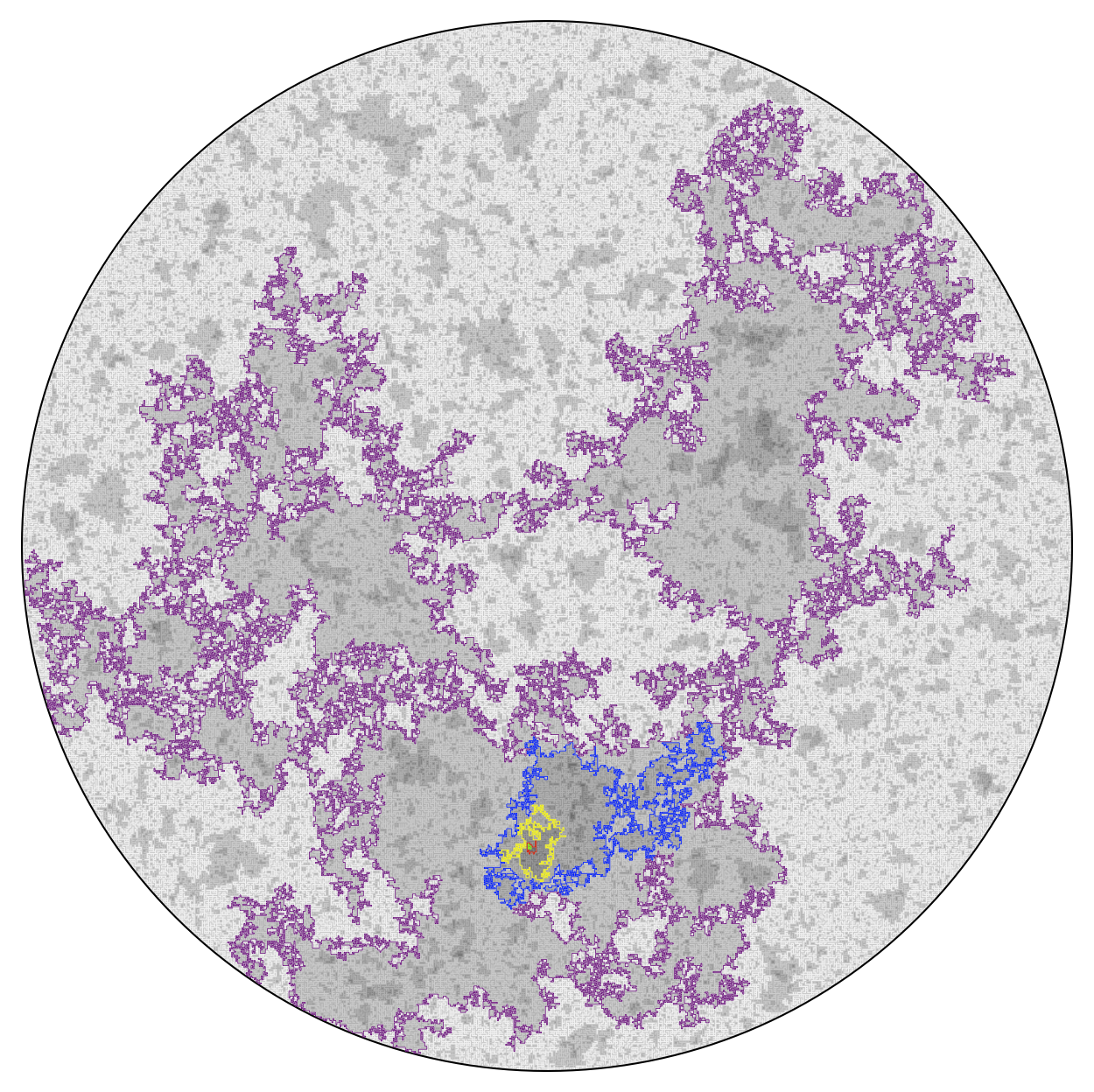}} \hfill
\subfigure[$\CLE_6$ (from critical bond percolation) $\star$]{\includegraphics[width=0.495\textwidth]{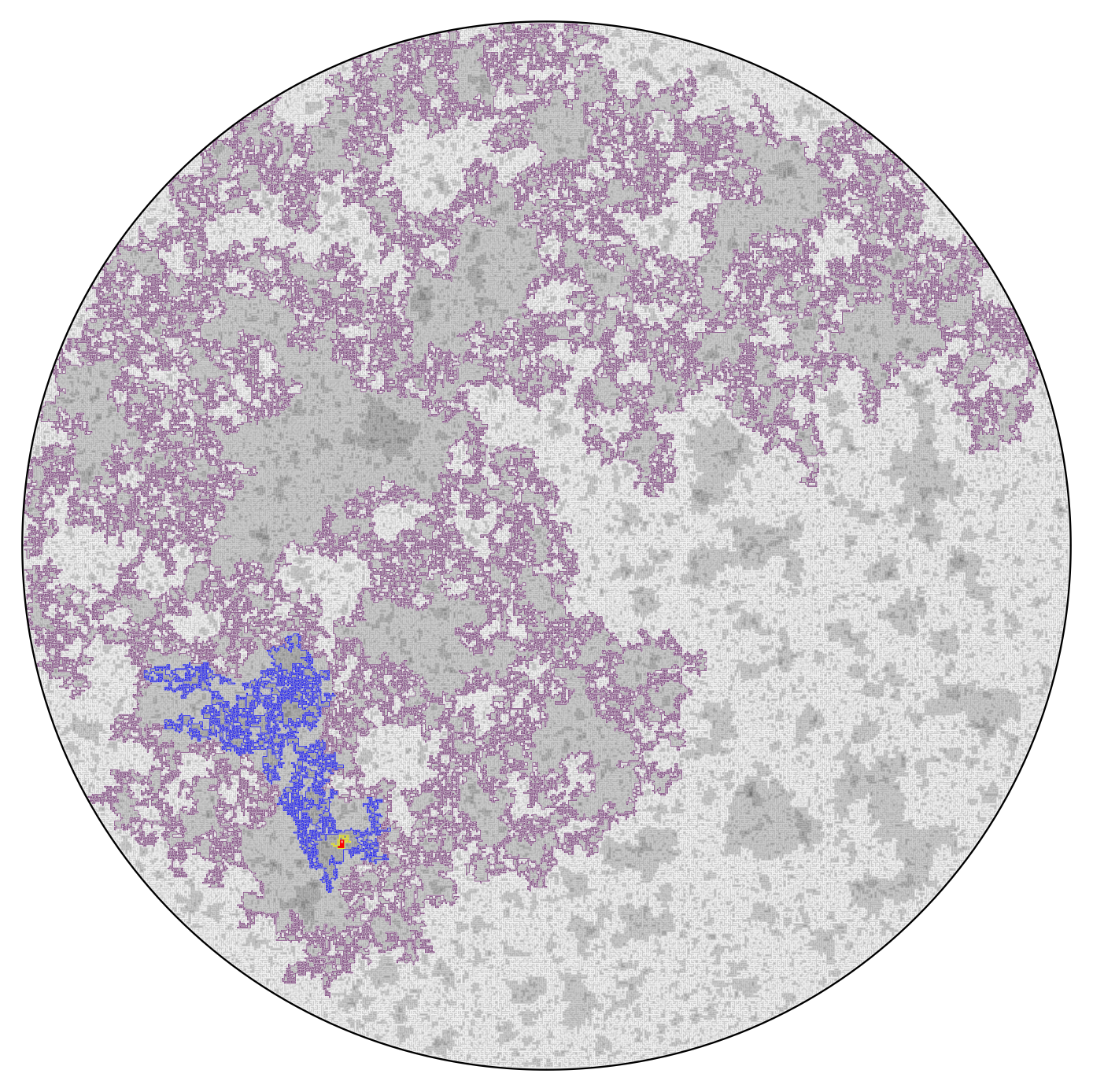}}
\end{center}
\caption{\label{fig::CLE_examples} Simulations of discrete loop
  models which converge to (or are believed to converge to, indicated with $\star$) $\CLE_\kappa$ in the fine mesh limit.
  For each of the $\CLE_\kappa$'s, one particular nested
  sequence of loops is outlined.  For $\CLE_\kappa$, almost all of the
  points in the domain are surrounded by an infinite nested sequence of
  loops, though the discrete samples shown here display only a few orders
  of nesting.}
\end{figure}

Let $\kappa \in (8/3,8)$, let $D \subsetneq \C$ be a simply connected
domain, and let $\Gamma$ \label{not::CLE} be a $\CLE_\kappa$ in $D$.  For
each point $z \in D$ and $\eps > 0$, we let
$\Loopcount_z(\eps)$ \label{not::Loopcount} be the number of loops of
$\Gamma$ which surround $B(z,\eps)$, the ball of radius $\eps$ centered at
$z$.  We prove the existence and conformal invariance of the limit as $\eps
\to 0$ of the random function $z\mapsto\Loopcount_z(\eps) -
\E[\Loopcount_z(\eps)]$ (with no additional normalization) in an
appropriate space of distributions (Theorem~\ref{thm::existence_weighted}).
We refer to this object as the \textit{nesting field\/} because, roughly,
its value describes the fluctuations of the nesting of $\Gamma$ around its
mean.  This result also holds when the loops are assigned i.i.d.\
weights. More precisely, we fix a probability measure $\mu$ on $\R$ with
finite second moment, define $\Gamma_z(\eps)$ \label{not::Gamma_z} to be
the set of loops in $\Gamma$ surrounding $B(z,\eps)$, and define
\begin{equation}
\label{eqn::normalized_loop_count}
\SLoopcount_z(\eps) =
\sum_{\Loop \in \Gamma_z(\eps)} \xi_\Loop\,,
\end{equation}
where $\xi_\Loop$ are i.i.d.\ random variables with law
$\mu$. \label{not::mu} We show that $z\mapsto\SLoopcount_z(\eps) -
\E[\SLoopcount_z(\eps)]$ converges as $\eps \to 0$ to a distribution we
call the \textit{weighted nesting field}. When $\kappa=4$ and $\mu$ is a
signed Bernoulli distribution, the weighted nesting field is the GFF
\cite{MS_CLE,extremes}.  Our result serves to generalize this construction
to other values of $\kappa \in (8/3,8)$ and weight measures $\mu$.  In
Theorem~\ref{thm::existence_weighted_alt}, we answer a question asked in
\cite[Problem~8.2]{SHE_CLE}.

The weighted nesting field is a random distribution, or generalized
function, on~$D$.  Informally, it is too rough to be defined pointwise
on~$D$, but it is still possible to integrate it against sufficiently
smooth compactly supported test functions on $D$.  More precisely, we prove
convergence to the nesting field in a certain local Sobolev space
$\Hloc^s(D)\subset C_c^\infty(D)'$ on $D$, where
$C_c^\infty(D)$ is the space of compactly supported smooth
  functions on $D$, $C_c^\infty(D)'$ is the space of distributions on
  $D$, and the index $s\in\R$ is a parameter characterizing how smooth
the test functions need to be. We review all the relevant definitions in
Section~\ref{sec::sobolev_spaces}.

The nesting field gives a loop-free description of the conformal loop ensemble.
For $\kappa\leq 4$ we believe that the nesting field determines the CLE, but that
for $\kappa>4$ the CLE contains more information.
(See Question~\hyperlink{ques::deterministic}{2} in the open problems section.)
In order to prove the existence of the nesting field, we show that the law
of CLE near a point rapidly forgets loops that are far away, in a sense that
we make quantitative.

Given $h\in C_c^\infty(D)'$ and $f\in C_c^\infty(D)$, we denote by
$\ang{h,f}$ the evaluation of the linear functional $h$ at $f$. Recall that
the pullback $h\circ \varphi^{-1}$ of $h \in C_c^\infty(D)'$ under a
conformal map $\varphi^{-1}$ is defined by $\langle
h\circ\varphi^{-1},f\rangle \colonequals \langle
h,|\varphi'|^2f\circ\varphi\rangle$ for $f\in C_c^\infty(\varphi(D))$.

\newpage
\begin{theorem}\label{thm::existence_weighted}
  Fix $\kappa\in(8/3,8)$ and $\delta>0$, and suppose $\mu$ is a probability
  measure on $\R$ with finite second moment.  Let $D \subsetneq \C$
  be a simply connected domain.  Let $\Gamma$ be a $\CLE_\kappa$ on $D$ and
  $(\xi_\CL)_{\CL\in\Gamma}$ be i.i.d.\ weights on the loops of $\Gamma$
  drawn from the distribution $\mu$.  Recall that for $\eps > 0$
  and $z \in D$, $\SLoopcount_z(\eps)$ denotes
  \[
  \SLoopcount_z(\eps) = \sum_{\substack{\CL\in\Gamma\\\text{$\CL$ surrounds
        $B(z,\eps)$}}} \xi_\CL\,.
  \]
  Let \label{not::h_eps}
  \begin{equation} h_\eps(z) = \SLoopcount_z(\eps) - \E[ \SLoopcount_z(\eps)]\,. \label{eq::h_eps}
  \end{equation}
  There exists an $\Hloc^{-2-\delta}(D)$-valued random variable
  $h=h(\Gamma,(\xi_\CL))$ such that for all $f\in C_c^\infty(D)$, almost
  surely $\lim_{\eps\to 0} \langle h_{\eps},f\rangle = \langle h,f\rangle$.
  Moreover, $h(\Gamma,(\xi_\CL))$ is
  almost surely a deterministic conformally invariant function of the CLE
  $\Gamma$ and the loop weights~$(\xi_\CL)_{\CL\in\Gamma}$: almost surely, for any
  conformal map $\varphi$ from $D$ to another simply connected
  domain, we have
  \[
  h(\varphi(\Gamma),(\xi_{\varphi(\CL)})_{\CL\in\Gamma}) =
  h(\Gamma,(\xi_\CL)_{\CL\in\Gamma}) \circ \varphi^{-1}\,.
  \]
\end{theorem}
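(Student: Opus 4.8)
The plan is to reduce the whole statement to second-moment estimates for the number of $\CLE_\kappa$ loops surrounding one or two given points---the two-point moment bounds that this paper also establishes---and then run an $L^2$/Borel--Cantelli argument along a geometric sequence of scales. As a preliminary one disposes of the weights: writing $m_1=\int x\,d\mu$, $m_2=\int x^2\,d\mu$ and using that the $\xi_\CL$ are i.i.d.\ and independent of $\Gamma$, expanding the double sum and separating diagonal from off-diagonal terms gives, for all $z,w\in D$ and $\eps,\eps'>0$,
\[\Cov\bigl(\SLoopcount_z(\eps),\SLoopcount_w(\eps')\bigr)=(m_2-m_1^2)\,\E\bigl[\Loopcount_{z,w}(\eps,\eps')\bigr]+m_1^2\,\Cov\bigl(\Loopcount_z(\eps),\Loopcount_w(\eps')\bigr),\]
where $\Loopcount_{z,w}(\eps,\eps')$ denotes the number of loops surrounding both $B(z,\eps)$ and $B(w,\eps')$. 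Thus every estimate needed for $h_\eps$ follows from the corresponding one for the unweighted counts, and I may as well think of the case $\mu=\delta_1$, the general case being the same bookkeeping.

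The core is an $L^2$ bound showing $\langle h_\eps,f\rangle$ is Cauchy as $\eps\to0$, with a rate. Fix $f\in C_c^\infty(D)$ supported in a compact $K\subset D$. Since $h_\eps$ is centered and $\langle h_\eps,f\rangle=\int_K h_\eps(z)f(z)\,dz$,
\[\E\bigl[(\langle h_{\eps'},f\rangle-\langle h_\eps,f\rangle)^2\bigr]=\iint_{K\times K}\Cov\bigl(\Delta(z),\Delta(w)\bigr)\,f(z)f(w)\,dz\,dw,\]
where $\Delta(z)=\Delta_{\eps,\eps'}(z)$ is the centered contribution of the loops $\CL$ surrounding $B(z,\eps')$ but not $B(z,\eps)$. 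Two ingredients control the right-hand side. First, the nested loops around a fixed point form a renewal process---the logarithms of the conformal radii seen from $z$ perform a random walk---so $\Var(\Delta_{\eps,\eps'}(z))\le C\log(\eps/\eps')+C$ uniformly for $z\in K$, which is $O(1)$ when $\eps'=e^{-1}\eps$. Second, and decisively, a locality estimate: a single loop can contribute to both $\Delta(z)$ and $\Delta(w)$ only if it surrounds $z$ and $w$ while passing within distance $\eps$ of each, which by Koebe's distortion theorem forces the domain $U_\CL$ it surrounds to have conformal radius $\lesssim\eps$ from $z$ yet to contain the far point $w$; such distorted loops are rare. I would quantify this as $|\Cov(\Delta_{\eps,e^{-1}\eps}(z),\Delta_{\eps,e^{-1}\eps}(w))|\le C(\eps/|z-w|)^\beta$ for some $\beta>0$, with the trivial bound by the variances when $|z-w|\lesssim\eps$. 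Feeding this in, the near-diagonal part of the double integral is $O(\eps^2\|f\|_\infty^2)$ and the remainder $O(\eps^{\min(\beta,2)}\area(K)\|f\|_\infty^2)$, so $\sum_n\E[(\langle h_{e^{-n-1}},f\rangle-\langle h_{e^{-n}},f\rangle)^2]<\infty$; hence $\langle h_{e^{-n}},f\rangle$ converges in $L^2$, and by Borel--Cantelli almost surely. Establishing this locality estimate is the main obstacle: it is a genuine geometric fact about $\CLE_\kappa$, to be extracted from the conformal Markov/restriction property together with distortion estimates for loops that surround a point yet come close to it.

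To pass from the scales $e^{-n}$ to the full limit $\eps\to0$, I would prove the same summable bound with $\langle h_{e^{-n-1}},f\rangle-\langle h_{e^{-n}},f\rangle$ replaced by $\sup_{e^{-n-1}\le\eps\le e^{-n}}|\langle h_\eps,f\rangle-\langle h_{e^{-n}},f\rangle|$, dominating this supremum by the pairing of $|f|$ with the total variation of $\eps\mapsto\SLoopcount_z(\eps)$ over the block, again controlled by the same loop-count moments; this gives $\langle h,f\rangle\colonequals\lim_{\eps\to0}\langle h_\eps,f\rangle$ almost surely for every fixed $f$. A uniform bound $\sup_\eps\E[\langle h_\eps,f\rangle^2]\le C\|f\|^2$ in a sufficiently strong Sobolev norm of $f$, obtained from the same covariance estimates, then lets one realize all these limits simultaneously as the pairings of a single $\Hloc^{-2-\delta}(D)$-valued random variable $h$, by the standard argument over a countable dense family of test functions together with a Kolmogorov-type continuity estimate in $f$; the precise index $-2-\delta$ reflects how the estimates are organized and is not expected to be sharp. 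Being an almost sure limit of measurable functions of $(\Gamma,(\xi_\CL))$, the variable $h$ is automatically a deterministic function of $(\Gamma,(\xi_\CL))$.

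It remains to prove the conformal invariance identity. Since $h$ is produced by a single recipe valid in every domain, this is a statement about the coupled pair $(\Gamma,\varphi(\Gamma))$. I would introduce a conformally natural truncation---replacing ``$\CL$ surrounds $B(z,\eps)$'' by ``$\confrad(z;U_\CL)>\eps$''---and rerun the $L^2$/locality argument to show that the resulting modified field $\wt h_\eps$ has the same almost sure limit as $h_\eps$ (at a point the two differ only by an $O(1)$ correction carried by loops ``at scale $\eps$'', which the decorrelation estimate kills after pairing with $f$), and, by the same estimate, that this limit is unchanged if the threshold $\eps$ is replaced by $g(z)\eps$ for any positive smooth $g$. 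Now ``$\CL$ surrounds $z$'' is conformally invariant and $\confrad(\varphi(z);\varphi(U_\CL))=|\varphi'(z)|\,\confrad(z;U_\CL)$, so the loops of $\varphi(\Gamma)$ with $\confrad(\varphi(z);\,\cdot\,)>\eps$ are the images of the loops of $\Gamma$ with $\confrad(z;\,\cdot\,)>\eps/|\varphi'(z)|$; changing variables $w=\varphi(z)$ in $\langle h(\varphi(\Gamma)),f\rangle$ and using threshold-robustness to replace $\eps/|\varphi'(z)|$ by $\eps$ identifies the limit with $\langle h(\Gamma),|\varphi'|^2(f\circ\varphi)\rangle=\langle h(\Gamma)\circ\varphi^{-1},f\rangle$. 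Hence $h(\varphi(\Gamma),(\xi_{\varphi(\CL)}))=h(\Gamma,(\xi_\CL))\circ\varphi^{-1}$ almost surely, as claimed.
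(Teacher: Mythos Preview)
Your overall strategy matches the paper's closely: the decomposition of the weighted covariance into a mean-squared term times $\E[|B|]$ plus $(\E\xi)^2$ times an unweighted covariance is exactly \eqref{eq::h_diff}; the locality estimate $|\Cov(\Delta(z),\Delta(w))|\lesssim(\eps/|z-w|)^\beta$ is the content of Lemmas~\ref{lem::mean_bound} and~\ref{lem::cov_bound} and is, as you say, the main obstacle; and your conformal-radius-threshold idea for conformal invariance is essentially what the paper does, comparing loops surrounding $B(\varphi(z),\eps|\varphi'(z)|)$ with loops surrounding $\varphi(B(z,\eps))$ via Lemma~\ref{lem::mean_loops_lcr} and then absorbing the variable threshold via Theorem~\ref{thm::var-field-diff}.

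There is, however, a genuine gap in your passage from the geometric sequence $\eps=e^{-n}$ to the continuous limit. You propose to dominate $\sup_{\eps\in[e^{-n-1},e^{-n}]}|\langle h_\eps-h_{e^{-n}},f\rangle|$ by $\int|f(z)|\,\mathrm{TV}_{[e^{-n-1},e^{-n}]}(\SLoopcount_z(\cdot))\,dz$ and assert this satisfies the same summable bound. It does not: the expected total variation of $\SLoopcount_z$ over a unit log-scale block is of order $\lptyp\,\E|\xi|$ (about $\lptyp$ loops per block, by Corollary~\ref{cor::two_point_moments}), independently of $n$; after pairing with $|f|$ the second moment is $O(1)$ per block and even the first moment does not tend to zero. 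The centering in $h_\eps$ is exactly what made the endpoint increment small in $L^2$, and that centering is destroyed once you pass to absolute values pointwise in $z$. The paper's fix is the short deterministic Lemma~\ref{lem::monotone_plus}: for nonnegative weights and nonnegative $g$, write $F(x)=\langle h_{e^{-x}},g\rangle=F_1(x)+F_2(x)$ with $F_1(x)=\langle\SLoopcount_\cdot(e^{-x}),g\rangle$ monotone and $F_2(x)=-\langle\E\SLoopcount_\cdot(e^{-x}),g\rangle$ having modulus of continuity $C\max(\delta^\alpha,e^{-\alpha x})$ by Lemma~\ref{lem::mean_loops_inr}; a monotone function plus a function with this modulus which together converge along every arithmetic sequence must converge. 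General $g$ and general weights are then handled by splitting into positive and negative parts.

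A smaller methodological difference: rather than a Kolmogorov-type continuity argument in $f$, the paper constructs $h$ directly as an element of $\Hloc^{-2-\delta}(D)$ by showing (Proposition~\ref{prop::convergence_Hminusd}) that $\|\psi h_{a^{n+1}}-\psi h_{a^n}\|_{H^{-2-\delta}}$ is almost surely summable, via a Borel--Cantelli argument on individual Fourier coefficients; this is what yields the almost-sure norm convergence of Theorem~\ref{thm::almost_sure_norm} and sidesteps the issue of upgrading $L^2$ bounds in $f$ to almost-sure continuity of $f\mapsto\langle h,f\rangle$.
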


In Theorem~\ref{thm::almost_sure_norm}, we prove a stronger form of
convergence, namely almost sure convergence in the norm topology of
$H^{-2-\delta}(D)$, when $\eps$ tends to $0$ along any given
geometric sequence.

We also consider the \textit{step nesting} sequence, defined by
\[
\mathfrak{h}_n(z) = \sum_{k=1}^n \xi_{\Loop_k(z)} - \E\left[\sum_{k=1}^n
  \xi_{\Loop_k(z)}\right], \quad n\in \N,
\]
where the random variables $(\xi_{\Loop})_{\Loop \in \Gamma}$ are
i.i.d.\ with law $\mu$. We may assume without loss of generality that $\mu$
has zero mean, so that $\mathfrak{h}_n(z) = \sum_{k=1}^n \xi_{\Loop_k(z)}$.
We establish the following convergence result for the step nesting
sequence, which parallels Theorem~\ref{thm::existence_weighted}:

\begin{theorem} \label{thm::existence_weighted_alt} Suppose that $D
  \subsetneq \C$ is a proper simply connected domain and $\delta>0$.
  Assume that the weight distribution $\mu$ has a finite second moment and
  zero mean. There exists an $\Hloc^{-2-\delta}(D)$-valued random variable
  $\mathfrak{h}$ such that $\lim_{n\to\infty} \mathfrak{h}_{n} =
  \mathfrak{h}$ almost surely in $\Hloc^{-2-\delta}(D)$.  Moreover,
  $\mathfrak{h}$ is almost surely determined by $\Gamma$ and
  $(\xi_\CL)_{\Loop\in\Gamma}$.

  Suppose that $\acute{D}$ is another simply connected domain and $\varphi
  \colon D \to \acute{D}$ is a conformal map.  Let $\acute{h}$ be the
  random element of $\Hloc^{-2-\delta}(\acute{D})$ associated with
  the $\CLE$ $\acute{\Gamma} = \varphi(\Gamma)$ on $\acute{D}$ and weights
  $(\xi_{\varphi^{-1}(\acute{\CL})})_{\acute{\CL} \in \acute{\Gamma}}$.
  Then
  $\acute{\mathfrak{h}} = \mathfrak{h} \circ \varphi^{-1}$ almost surely.
\end{theorem}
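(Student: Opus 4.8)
The plan is to observe that, with $\mu$ of zero mean, the step nesting sequence $(\mathfrak{h}_n)_n$ is a genuine martingale in the layer index $n$, so that convergence follows from a uniform $L^2$ bound rather than from the delicate cancellations needed for Theorem~\ref{thm::existence_weighted}. Let $\CF_n$ be the $\sigma$-algebra generated by the loops of $\Gamma$ of nesting depth at most $n$ together with their weights. By the renewal (Markov) property of CLE nesting — conditionally on $\CF_n$, inside each depth-$n$ loop the configuration is an independent CLE carrying fresh i.i.d.\ $\mu$-weights — the weight $\xi_{\Loop_{n+1}(z)}$ of the outermost not-yet-revealed loop around $z$ is, whatever that loop turns out to be, an independent draw from $\mu$; hence $\E[\xi_{\Loop_{n+1}(z)}\mid\CF_n]=0$. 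Consequently $\langle\mathfrak{h}_n,f\rangle$ is an $(\CF_n)$-martingale for every $f\in C_c^\infty(D)$, and $\bigvee_n\CF_n=\sigma(\Gamma,(\xi_\CL))$ because every loop has finite nesting depth.

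Next I would compute the covariance. Conditioning on the loop configuration alone and using that the weights are independent with mean zero and variance $\sigma^2:=\Var(\mu)$,
\[
\E[\mathfrak{h}_n(z)\,\mathfrak{h}_n(w)]\;=\;\sigma^2\,\E[\CN_n(z,w)],
\]
where $\CN_n(z,w)$ counts the loops of $\Gamma$ surrounding both $z$ and $w$ that lie among the first $n$ loops around $z$ and among the first $n$ around $w$; as $n\to\infty$ this increases to $\CN_\infty(z,w):=\#\{\CL\in\Gamma:\CL\text{ surrounds both }z\text{ and }w\}$. Fix a smooth, relatively compact $D'\Subset D$. The estimates on moments of the number of CLE loops surrounding two points give $\E[\CN_\infty(z,w)]\le C_{D'}\bigl(1+\log_+\tfrac1{|z-w|}\bigr)$ uniformly over $z,w\in D'$, which is integrable on $D'\times D'$ since $\log_+\tfrac1{|u|}$ is locally integrable on $\R^2$. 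Because $2+\delta>1$, the symmetric kernel $G$ on $D'\times D'$ representing the $H^{-2-\delta}(D')$ inner product (see Section~\ref{sec::sobolev_spaces}) is bounded; and since $\mathfrak{h}_n|_{D'}\in L^2(D')$ almost surely (indeed $\E\|\mathfrak{h}_n|_{D'}\|_{L^2}^2=\sigma^2 n\,|D'|<\infty$),
\[
\E\bigl\|\mathfrak{h}_n|_{D'}\bigr\|_{H^{-2-\delta}(D')}^2\;=\;\iint_{D'\times D'}G(z,w)\,\E[\mathfrak{h}_n(z)\mathfrak{h}_n(w)]\,dz\,dw,
\]
which, by monotonicity in $n$, is at most $\sigma^2\|G\|_{L^\infty(D'\times D')}\iint_{D'\times D'}\E[\CN_\infty(z,w)]\,dz\,dw<\infty$ — a bound uniform in $n$.

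With the uniform $L^2$ bound in hand the rest is soft. Restricted to $D'$, $(\mathfrak{h}_n|_{D'})_n$ is an $L^2$-bounded martingale valued in the Hilbert space $H^{-2-\delta}(D')$ (the martingale identity there following from the scalar one by testing against a countable dense family), hence converges almost surely and in $L^2$ in $H^{-2-\delta}(D')$; concretely this is the orthogonality of the increments $\mathfrak{h}_n-\mathfrak{h}_{n-1}$ in $L^2(\Omega;H^{-2-\delta}(D'))$ together with Doob's maximal inequality. Taking an exhaustion $D_1\Subset D_2\Subset\cdots\uparrow D$, the a.s.\ limits over the $D_k$ are consistent under restriction (restriction maps being continuous and a.s.\ limits unique), so they glue to an $\Hloc^{-2-\delta}(D)$-valued $\mathfrak{h}$ with $\mathfrak{h}_n\to\mathfrak{h}$ a.s.\ in $\Hloc^{-2-\delta}(D)$. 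Determinism is immediate: each $\mathfrak{h}_n$ is an explicit measurable function of $(\Gamma,(\xi_\CL))$ and $\Hloc^{-2-\delta}(D)$ is Polish, so the a.s.\ limit is $\sigma(\Gamma,(\xi_\CL))$-measurable. For conformal covariance, nesting depth is a topological invariant, so the $k$-th loop of $\acute\Gamma=\varphi(\Gamma)$ around $\varphi(z)$ is $\varphi(\Loop_k(z))$, whose weight under the prescribed reweighting is $\xi_{\varphi^{-1}(\varphi(\Loop_k(z)))}=\xi_{\Loop_k(z)}$; hence $\acute{\mathfrak{h}}_n\circ\varphi=\mathfrak{h}_n$ pointwise, i.e.\ $\acute{\mathfrak{h}}_n=\mathfrak{h}_n\circ\varphi^{-1}$ as distributions. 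Letting $n\to\infty$ and using that the conformal pullback is continuous $\Hloc^{-2-\delta}(D)\to\Hloc^{-2-\delta}(\acute D)$ (as already needed for Theorem~\ref{thm::existence_weighted}) yields $\acute{\mathfrak{h}}=\mathfrak{h}\circ\varphi^{-1}$ a.s.

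The main obstacle is the uniform $L^2$ bound derived above, which hinges entirely on the two-point moment estimate $\E[\CN_\infty(z,w)]=O\!\bigl(\log\tfrac1{|z-w|}\bigr)$ holding with constants uniform on compact subsets of $D$ — exactly the input supplied by the estimates on the number of CLE loops surrounding two points — together with the routine but necessary Sobolev bookkeeping (boundedness of the representing kernel when $2+\delta>1$, continuity of restriction maps, continuity of the conformal pullback). One subtle point worth stating carefully is the identity $\E[\xi_{\Loop_{n+1}(z)}\mid\CF_n]=0$: although $\CF_n$ does not reveal which loop $\Loop_{n+1}(z)$ is, once one further conditions on the entire $(n+1)$-st layer of loops its weight is still an independent mean-zero draw, so the identity survives averaging.
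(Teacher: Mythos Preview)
Your argument is correct and takes a genuinely different route from the paper's. The paper verifies the hypothesis of Proposition~\ref{prop::convergence_Hminusd} by computing
\[
\E\big[(\mathfrak{h}_{n+1}(z)-\mathfrak{h}_n(z))(\mathfrak{h}_{n+1}(w)-\mathfrak{h}_n(w))\big]=\sigma^2\,\P[\Loopcount_{z,w}\geq n+1]
\]
and then invoking an exponential tail bound for $\Loopcount_{z,w}$ (Lemma~\ref{lem::Nzw}) to show that $\iint_K \P[\Loopcount_{z,w}\geq k]\,dz\,dw$ decays exponentially in $k$, which supplies the cubically summable sequence $(\bound_n)$ required by the Borel--Cantelli machinery of Proposition~\ref{prop::convergence_Hminusd}. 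You instead exploit the mean-zero hypothesis to recognise $(\mathfrak{h}_n)$ as a martingale with orthogonal increments in a Hilbert space, so that convergence follows from a single uniform bound $\sup_n\E\|\psi\mathfrak{h}_n\|^2_{H^{-2-\delta}}<\infty$ together with Doob's maximal inequality; and for that bound you need only the first-moment estimate $\E[\Loopcount_{z,w}]=O(1+\log_+|z-w|^{-1})$ from Theorem~\ref{thm::surround_two_point_bound} with $j=1$, not the exponential tail. Your proof is thus more elementary for this statement and uses strictly less CLE input. The paper's route, in exchange, develops Proposition~\ref{prop::convergence_Hminusd} as reusable machinery that handles both Theorems~\ref{thm::existence_weighted} and~\ref{thm::existence_weighted_alt} uniformly, whereas the martingale structure is special to the step-nesting case and unavailable for $h_\eps$. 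One cosmetic point: the paper does not set up $H^{-2-\delta}(D')$ for bounded $D'$ or its kernel, so your bounded-kernel step should be phrased, as in the proof of Proposition~\ref{prop::convergence_Hminusd}, via a cutoff $\psi\in C_c^\infty(D)$ and the $H^{-2-\delta}(\T^2)$ norm, with kernel $G(z,w)=\sum_{k\in\Z^2}\langle k\rangle^{-2(2+\delta)}e^{ik\cdot(z-w)}$, which is bounded since $2(2+\delta)>2$.
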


In Proposition~\ref{prop::fieldsequal}, we show that the step nesting field
and the weighted nesting field are equal, under the assumption that $\mu$
has zero mean.

When $\kappa=4$, $\gffparam =\sqrt{\pi/2}$, and $\mu =\mu_B$ where
$\mu_{\rm B}(\{\gffparam\}) = \mu_{\rm B}(\{-\gffparam\})=1/2$ (as in
Theorem~\ref{I-thm::gff_maximum} of \cite{extremes}) the distribution $h$ of
Theorem~\ref{thm::existence_weighted} is that of a GFF on $D$
\cite{MS_CLE}.  The existence of the distributional limit for other values
of $\kappa$ was posed in \cite[Problem~8.2]{SHE_CLE}.  Note that in this
context, $\tfrac{2}{\pi} \E[ \SLoopcount_z(\eps) \SLoopcount_w(\eps)]$ is
equal to the expected number of loops which surround both $B(z,\eps)$ and
$B(w,\eps)$. Let $G_D(z,w)$ \label{not::greens} be the Green's function for
the negative Dirichlet Laplacian on $D$. Since $\SLoopcount_z(\eps)$
converges to the GFF \cite{MS_CLE}, it follows that $\tfrac{2}{\pi} \E[
\SLoopcount_z(\eps) \SLoopcount_w(\eps)]$ converges to
$\frac{2}{\pi}G_D(z,w)$ (see Section 2 in \cite{DPRZ}).  That is, the
expected number of $\CLE_4$ loops which surround both $z$ and $w$ is given
by $\frac{2}{\pi}G_D(z,w)$.

One of the elements of the proof of Theorem~\ref{thm::existence_weighted}
is an extension of this bound which holds for all $\kappa \in (8/3,8)$.  We
include this as our final main theorem.

\begin{theorem}
\label{thm::surround_two_point_bound}
Let $\Gamma$ be a $\CLE_\kappa$ (with $8/3<\kappa<8$) on a simply connected
proper domain $D$.  For $z,w \in D$ distinct, let $\Loopcount_{z,w}$
\label{not::Loopcount_zw} be the number of loops of $\Gamma$ which surround
both $z$ and $w$.  For each integer $j\geq 1$, there exists a constant
$C_{\kappa,j} \in (0,\infty)$ such that
\begin{equation}
\label{eqn::expected_loops_bound}
\big|\E[\Loopcount_{z,w}^j] - (\lptyp \,2\pi\, G_D(z,w))^j\big| \leq C_{\kappa,j} (G_D(z,w)+1)^{j-1}\,.
\end{equation}
\end{theorem}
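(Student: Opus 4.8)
The plan is to realize $\Loopcount_{z,w}$ as essentially the absorption time of a one-dimensional Markov chain that is a small perturbation of a renewal process, and then to transfer the moment asymptotics from the renewal process. Put $D_0=D$ and, for $k\ge1$, let $\Loop_k(z)$ be the $k$-th largest loop of $\Gamma$ surrounding $z$ and $D_k=\interior(\Loop_k(z))$; a.s.\ $z$ is surrounded by infinitely many (hence nested) loops, so this makes sense, and since loops surrounding $z$ are nested, $\Loopcount_{z,w}=\max\{k:w\in D_k\}$, the events $\{w\in D_k\}$ being decreasing in $k$. Set $s_k=2\pi G_{D_k}(z,w)$ for $k\le\Loopcount_{z,w}$ and $s_k=0$ thereafter, so $s_0=2\pi G_D(z,w)$. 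By conformal invariance of the Green's function, the conformal map $D_k\to\D$ fixing $z$ sends $w$ to a point of modulus $e^{-s_k}$; combining this with the conformal Markov property of $\CLE_\kappa$ (the loops of $\Gamma$ inside $D_k$ form a fresh independent $\CLE_\kappa$ in $D_k$) and the rotational invariance of $\CLE_\kappa$ in $\D$, one checks that $(s_k)_{k\ge0}$ is a Markov chain with $\Loopcount_{z,w}=\min\{k\ge0:s_{k+1}=0\}$ whose transition from a state $s>0$ is: sample the outermost loop $\ell$ around $0$ in a $\CLE_\kappa$ on $\D$; if $\ell$ does not surround $e^{-s}$, set $s_{k+1}=0$; otherwise move to $s_{k+1}=-\log|\phi_\ell(e^{-s})|\in(0,s]$, with $\phi_\ell\colon\interior(\ell)\to\D$ conformal fixing $0$. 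The increments $\log\confrad(D_{k-1},z)-\log\confrad(D_k,z)$ are i.i.d.\ with common law $L$ of mean $1/\lptyp$ and exponential tails.

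Two consequences of the Koebe theorems drive the estimate. If $\ell$ does not surround $e^{-s}$ then $\dist(0,\ell)\le e^{-s}$, so $\confrad(\interior(\ell),0)\le4e^{-s}$ by the Koebe $\tfrac14$-theorem; hence the absorption probability from state $s$ is at most $\P[L\ge s-\log4]\le C_\kappa e^{-c_\kappa s}$. And by the Koebe distortion theorem, on a ``move'' step the increment $s-s_{k+1}$ differs from $-\log\confrad(\interior(\ell),0)$ by at most $C_\kappa e^{-s_{k+1}}$. Together these furnish a coupling of $(s_k)$ with i.i.d.\ variables $(L_k)\sim L$ such that on a move step $s_k=s_{k-1}-L_k+\eta_k$ with $\E[|\eta_k|^p\mid\CF_{k-1}]\le C_{\kappa,p}\,e^{-c_\kappa s_{k-1}}$ for all $p$, and $s_{k-1}-L_k-C_\kappa\le s_k\le s_{k-1}$ always.

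Now let $\tilde s_k=s_0-(L_1+\cdots+L_k)$ and $N=\min\{k:\tilde s_k\le0\}$, the renewal stopping time of $(L_i)$ at level $s_0=2\pi G_D(z,w)$; since $L$ has all moments, classical renewal theory gives $\E[N^j]=(\lptyp s_0)^j+O_{\kappa,j}\big((s_0+1)^{j-1}\big)$ for each $j\ge1$. Along the coupling the deviation $\tilde s_k-s_k$ before absorption equals $-(\eta_1+\cdots+\eta_k)$, hence is controlled by $H:=\sum_{i\ge1}|\eta_i|\mathbf 1_{\{i\le\Loopcount_{z,w}\}}$; since $|\eta_i|$ is negligible unless $s_{i-1}$ is small while $(\tilde s_k)$ descends linearly at rate $1/\lptyp$, a stopping-time bootstrap shows that $H$ — and likewise the total early-absorption discrepancy, the relevant sums being geometric, $\sum_k C_\kappa e^{-c_\kappa s_k}=O_\kappa(1)$ — have moments bounded by constants depending only on $\kappa$. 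Consequently $|\Loopcount_{z,w}-N|$, being at most a renewal count started from a level of order $H$ plus $O(1)$, has all moments bounded by $C_{\kappa,p}$ uniformly in $(D,z,w)$. Writing $\Loopcount_{z,w}=N+\Delta$ and expanding $\Loopcount_{z,w}^j=\sum_{i=0}^j\binom{j}{i}N^{j-i}\Delta^i$, Hölder bounds each $i\ge1$ term by $\E[N^j]^{(j-i)/j}\E[|\Delta|^j]^{i/j}\le C_{\kappa,j}(s_0+1)^{j-i}\le C_{\kappa,j}(s_0+1)^{j-1}$, so $\E[\Loopcount_{z,w}^j]=\E[N^j]+O_{\kappa,j}((s_0+1)^{j-1})=(\lptyp\cdot2\pi\,G_D(z,w))^j+O_{\kappa,j}((G_D(z,w)+1)^{j-1})$, which is \eqref{eqn::expected_loops_bound}.

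The hard part will be the uniformity in the third paragraph: one must show that the accumulated Koebe-distortion defects, together with the small chance of an anomalously early absorption, move the nesting chain away from a clean renewal process only by an amount whose moments are bounded \emph{uniformly} in $D$ and in $z,w$ — in particular uniformly as $G_D(z,w)\to\infty$. (A naive split into ``typical'' and ``atypical'' trajectories fails, since on the atypical event $\Loopcount_{z,w}$ is still of order $G_D(z,w)$; one really needs the coupling to pin $\Loopcount_{z,w}$ to $N$ with an error of bounded moments.) Granting that, the remaining ingredients are routine: conformal invariance and the conformal Markov property of $\CLE_\kappa$, the Koebe theorems together with the known exponential tails of the $\CLE_\kappa$ conformal-radius increments, and standard renewal-theoretic moment estimates.
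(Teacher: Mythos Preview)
Your outline is plausible but takes a harder road than the paper, and the step you flag as ``the hard part'' is one the paper sidesteps entirely by a different choice of coordinate.  Instead of tracking $s_k=2\pi G_{D_k}(z,w)$ --- which is only \emph{approximately} a renewal process, with Koebe-distortion errors $\eta_k$ and a separate early-absorption mechanism --- the paper first conformally maps to $(\D,0,e^{-x})$ with $x=2\pi G_D(z,w)$ and then works with the log-conformal-radius increments $T_i$ of the loops around $0$.  These are \emph{exactly} i.i.d., so $S_k=\sum_{i\le k}T_i$ is an honest renewal process and $\tau_x=\min\{k:S_k\ge x\}$ is its passage time.  The comparison between $\Loopcount_{0,e^{-x}}$ and $\tau_x$ is then immediate from the sandwich $\tau_{x-\log4}\le J^\cap_{0,e^{-x}}\le\Loopcount_{0,e^{-x}}+1\le J^\subset_{0,e^{-x}}$ together with Corollary~\ref{cor::loop_contain_stoch_dom} and the overshoot bound (Lemma~\ref{lem::firsthitting}); no accumulated distortion, no coupling bootstrap.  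For the moment asymptotics of $\tau_x$ the paper does not invoke ``classical renewal theory'' but gives a short self-contained argument: apply optional stopping to the exponential martingale $M_n=\exp(\lambda S_n-\Lambda_\kappa(\lambda)n)$, differentiate the resulting identity $j$ times at $\lambda=0$, and induct on $j$ to get $\E[(\Lambda_\kappa'(0)\tau_x)^j]=x^j+O((x+1)^{j-1})$.

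Your route can likely be pushed through, but the uniform bound on the moments of $\Delta=\Loopcount_{z,w}-N$ is not yet established in your sketch: you need to control not just $\P[\text{early absorption at step }k]\le Ce^{-cs_{k-1}}$ but the \emph{contribution} of such an event to $\E[|\Delta|^j]$, and on that event $N-\Loopcount_{z,w}$ is itself of order $s_{k-1}$, so a naive union bound gives $\sum_k e^{-cs_{k-1}}s_{k-1}^j$ --- finite, but the argument that the $s_k$ decrease fast enough for this to be $O_\kappa(1)$ uniformly is exactly the circularity you note.  The paper's choice of tracking $\confrad(z;\cdot)$ rather than $G_{\cdot}(z,w)$ makes all of this disappear.
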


\subsection*{Outline}

 In Section~\ref{sec::cle_estimates} we review background material and
  establish some general CLE estimates, and in Section~\ref{sec::tail} we
  prove Theorem~\ref{thm::surround_two_point_bound}.
  Section~\ref{sec::field_regularity} includes proofs of several technical
  results used in the proof of Theorem~\ref{thm::existence_weighted}. In
  Section~\ref{sec::sobolev_spaces} we provide a brief overview of the
  necessary material on distributions and Sobolev spaces, and we establish
  a general result (Proposition~\ref{prop::convergence_Hminusd}) regarding
  the almost-sure convergence of a sequence of random distributions. In
  Sections~\ref{sec::weighted_loop_distribution} and~\ref{sec::step} we
  prove Theorems~\ref{thm::existence_weighted}
  and~\ref{thm::existence_weighted_alt}, respectively. We conclude by
  listing open questions in Section~\ref{sec::questions}.

\section{Basic CLE estimates}
\label{sec::cle_estimates}

\makeatletter{}
In this section we record some facts about CLE. We refer the reader to
  the preliminaries section in \cite{extremes} for an
  introduction to CLE. We begin by reminding the reader of the Koebe
distortion theorem and the Koebe quarter theorem.
\begin{theorem} (Koebe distortion theorem) \label{thm::distortion} If $f:\D
  \to \C$ is an injective analytic function and $f(0)=0$, then
\[
\frac{r}{(1+r)^2}|f'(0)| \leq |f(re^{i\theta})| \leq
\frac{r}{(1-r)^2}|f'(0)|, \quad \text{for }\theta \in \R \text{ and }  0 \leq r < 1\,.
\]
\end{theorem}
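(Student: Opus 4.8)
The plan is to run the classical argument for the Koebe growth theorem: reduce to the normalized univalent class, extract Bieberbach's second-coefficient bound from the area theorem, feed disk automorphisms into that bound to control $|g'|$, and then integrate. For the reduction, note that $f'(0)\neq 0$ (injective analytic maps have nonvanishing derivative), so we may set $g = f/f'(0)$; then $g$ is injective and analytic on $\D$ with $g(0)=0$ and $g'(0)=1$, i.e.\ $g$ lies in the usual normalized univalent class $S$. Since $|f(z)| = |f'(0)|\,|g(z)|$, it suffices to prove $r/(1+r)^2 \le |g(re^{i\theta})| \le r/(1-r)^2$ for every $g\in S$ and $0\le r<1$.

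Next I would record the two classical consequences of the area theorem (which states $\sum_{n\ge1} n|b_n|^2\le 1$ for any univalent $\zeta + b_0 + b_1\zeta^{-1}+\cdots$ on $\{|\zeta|>1\}$) that the argument needs. Writing $g(z) = z + a_2 z^2 + \cdots$, the odd square-root transform $\phi(z) = \sqrt{g(z^2)} = z + \tfrac{a_2}{2}z^3 + \cdots$ is univalent on $\D$, so $\zeta\mapsto 1/\phi(1/\zeta) = \zeta - \tfrac{a_2}{2}\zeta^{-1} + \cdots$ is univalent on $\{|\zeta|>1\}$, and the area theorem forces $|a_2|\le 2$ (Bieberbach's inequality). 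Applying this coefficient bound to $z\mapsto w_0 g(z)/(w_0-g(z)) = z + (a_2 + w_0^{-1})z^2 + \cdots$ whenever $g$ omits a value $w_0\neq 0$ gives $|w_0^{-1}|\le 4$, i.e.\ the Koebe quarter theorem $g(\D)\supseteq B(0,1/4)$; I will need this for the lower bound.

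The computational core is the third step: inserting disk automorphisms into Bieberbach's inequality. Fix $\alpha\in\D$ with $r = |\alpha|>0$, let $\varphi_\alpha(z) = (z+\alpha)/(1+\bar\alpha z)$, and set $F(z) = \bigl(g(\varphi_\alpha(z)) - g(\alpha)\bigr)/\bigl((1-|\alpha|^2)\,g'(\alpha)\bigr)$, which again lies in $S$. Expanding $\varphi_\alpha$ and $g$ to second order about $\alpha$ shows the quadratic Taylor coefficient of $F$ equals $\tfrac12(1-r^2)\,g''(\alpha)/g'(\alpha) - \bar\alpha$, so $|a_2|\le 2$ becomes $\bigl|(1-r^2)\,\alpha g''(\alpha)/g'(\alpha) - 2r^2\bigr|\le 4r$. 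Writing $\alpha = re^{i\theta}$ and using $\re\bigl(\alpha g''(\alpha)/g'(\alpha)\bigr) = r\,\partial_r\log|g'(re^{i\theta})|$, taking real parts yields $\frac{2r-4}{1-r^2}\le \partial_r\log|g'(re^{i\theta})|\le \frac{2r+4}{1-r^2}$. Integrating in $r$ from $0$ (where $|g'|=1$) then gives the derivative-distortion bounds $\frac{1-r}{(1+r)^3}\le |g'(re^{i\theta})|\le \frac{1+r}{(1-r)^3}$.

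Finally I would integrate these bounds to control $|g|$. Along the radius, $|g(re^{i\theta})| \le \int_0^r|g'(\rho e^{i\theta})|\,d\rho \le \int_0^r \frac{1+\rho}{(1-\rho)^3}\,d\rho = \frac{r}{(1-r)^2}$, which is the upper bound. For the lower bound, if $|g(\alpha)|\ge 1/4$ we are done since $r/(1+r)^2\le 1/4$; otherwise the quarter theorem puts the segment $[0,g(\alpha)]$ inside $g(\D)$, so its preimage under $g$ is a path $t\mapsto z(t)$, $t\in[0,1]$, from $0$ to $\alpha$ along which $g(z(t))$ traverses that segment, and with $\psi(s) = \int_0^s\frac{1-u}{(1+u)^3}\,du = \frac{s}{(1+s)^2}$ one has $\frac{d}{dt}\psi(|z(t)|) = \psi'(|z(t)|)\,\tfrac{d}{dt}|z(t)| \le \frac{1-|z(t)|}{(1+|z(t)|)^3}\,|z'(t)| \le |g'(z(t))\,z'(t)|$; integrating over $[0,1]$ gives $\frac{r}{(1+r)^2} = \psi(r) \le \int_0^1|g'(z(t))\,z'(t)|\,dt = |g(\alpha)|$. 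Undoing the normalization $|f| = |f'(0)|\,|g|$ finishes the proof. The main obstacle is the third step --- the automorphism substitution and the passage to, and integration of, the differential inequality for $\log|g'|$ --- together, to a lesser extent, with arranging the lower growth bound through the quarter theorem and the monotone auxiliary function $\psi$; the area-theorem input is classical but is the engine behind everything.
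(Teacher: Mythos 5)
The paper itself does not prove this result: it is stated as classical background (the Koebe growth/distortion theorem) and the one consequence the paper actually uses, the quarter theorem, is attributed directly to Lawler's book \cite{LAW05}, Theorem 3.17. So there is no in-paper argument to compare against; what you have supplied is the canonical proof. Your chain is correct in all its parts: the reduction $g = f/f'(0)\in S$ (using that univalent implies $f'(0)\neq 0$); the area theorem applied to $\zeta\mapsto 1/\sqrt{g(1/\zeta^2)}$ giving $|a_2|\le 2$; the quarter theorem from $z\mapsto w_0 g(z)/(w_0-g(z))$; the automorphism composition $F = (g\circ\varphi_\alpha - g(\alpha))/\bigl((1-|\alpha|^2)g'(\alpha)\bigr)\in S$, whose second Taylor coefficient you computed correctly, yielding $\bigl|(1-r^2)\alpha g''(\alpha)/g'(\alpha) - 2r^2\bigr|\le 4r$; the identity $\re\bigl(\alpha g''(\alpha)/g'(\alpha)\bigr) = r\,\partial_r\log|g'(re^{i\theta})|$ and the integration producing $\frac{1-r}{(1+r)^3}\le|g'|\le\frac{1+r}{(1-r)^3}$; and both growth integrations, including pulling back the segment $[0,g(\alpha)]\subset B(0,1/4)\subset g(\D)$ when $|g(\alpha)|<1/4$. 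Two small points worth making explicit if you write this out: the inequality $\psi'(|z(t)|)\,\frac{d}{dt}|z(t)|\le\psi'(|z(t)|)\,|z'(t)|$ uses that $\psi(s)=s/(1+s)^2$ is nondecreasing on $[0,1)$, so $\psi'\ge 0$; and the equality $\int_0^1 |g'(z(t))\,z'(t)|\,dt=|g(\alpha)|$ holds precisely because $g\circ z$ parametrizes the straight segment, whose arclength equals $|g(\alpha)|$.
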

The Koebe quarter theorem, which says that $B(0,\tfrac{1}{4}|f'(0)|)
\subset f(\D)$, follows from the lower bound in the distortion theorem
\cite[Theorem~3.17]{LAW05}.  Combining the quarter theorem with the Schwarz
lemma \cite[Lemma~2.1]{LAW05}, we obtain the following corollary.
\begin{corollary} \label{cor::onefourth} If
  $D\subsetneq \C$ is a simply connected domain, $z\in D$, and $f:\D \to D$
  is a conformal map sending 0 to $z$, then the inradius
  $\inrad(z;D) \colonequals \inf_{w \in \C \setminus D}|z-w|$
  and the conformal radius $\confrad(z;D)\colonequals |f'(0)|$ satisfy
\[
\inrad(z;D) \leq \confrad(z;D) \leq 4\,\inrad(z;D)\,.
\]
\end{corollary}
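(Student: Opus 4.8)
The plan is to prove the two inequalities separately, each coming from one of the two facts recalled just above the statement: the bound $\confrad(z;D)\le 4\,\inrad(z;D)$ will come from the Koebe quarter theorem, and $\inrad(z;D)\le\confrad(z;D)$ from the Schwarz lemma.

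First I would normalize. Replacing $f$ by the map $w\mapsto f(w)-z$ — which is still injective and analytic on $\D$, now sending $0$ to $0$ and having the same derivative $f'(0)$ at the origin — we may assume $z=0$ and $0\in D$, since this translation changes neither $\inrad$ nor $\confrad$. For the upper bound, the Koebe quarter theorem (the displayed consequence of Theorem~\ref{thm::distortion}) gives $B(0,\tfrac14|f'(0)|)\subset f(\D)=D$. Since $\inrad(0;D)=\inf_{w\in\C\setminus D}|w|$ is precisely the radius of the largest open ball about $0$ contained in $D$, this yields $\tfrac14\confrad(0;D)=\tfrac14|f'(0)|\le\inrad(0;D)$.

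For the lower bound, write $r=\inrad(0;D)$, which is strictly positive because $D$ is open and $0\in D$, so that $B(0,r)\subset D$. Then $\phi(w)\colonequals f^{-1}(rw)$ is a well-defined analytic map from $\D$ into $\D$ with $\phi(0)=f^{-1}(0)=0$, and the Schwarz lemma \cite[Lemma~2.1]{LAW05} gives $|\phi'(0)|\le 1$. Computing $\phi'(0)=r\,(f^{-1})'(0)=r/f'(0)$, we conclude $r\le|f'(0)|=\confrad(0;D)$, which is the remaining inequality; combining the two inequalities and undoing the normalization gives the corollary.

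I do not expect any genuine obstacle here: the entire content is the pairing of the quarter theorem (for one direction) with the Schwarz lemma (for the other), and the only points requiring a word of care are the harmless reduction to $z=0$ and the observation that $\inrad(z;D)>0$, which is what makes $\phi$ defined on all of $\D$.
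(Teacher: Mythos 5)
Your proof is correct and follows exactly the route the paper indicates — quarter theorem for $\confrad\le 4\,\inrad$ and the Schwarz lemma for $\inrad\le\confrad$, after the harmless translation to $z=0$. The paper leaves this to the reader; your write-up simply fills in that intended argument.
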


\label{not::loop} For the $\CLE_\kappa$ $\Gamma$ in $D$, $z\in D$, and $j
\geq 0$, we define $\Loop_z^j$ to be the $j$th outermost loop of $\Gamma$
which surrounds $z$. For $r>0$, we define
\begin{subequations}
\label{eq::Jcapsubset} \label{not::Jcapsubset}
\begin{align}
J^\cap_{z,r}&\colonequals\min\{j \geq 1 : \Loop_z^j \cap B(z,r) \neq \varnothing\} \\
J^\subset_{z,r}&\colonequals\min\{j \geq 1 : \Loop_z^j \subset B(z,r) \}.
\end{align}
\end{subequations}

\begin{lemma}
\label{lem::loop_contain_prop}
For each $\kappa \in (8/3,8)$ there exists $p = p(\kappa) > 0$ such that
for any proper simply connected domain~$D$ and $z \in D$,
\[ \P[\Loop_z^2 \subseteq B(z,\dist(z,\partial D))] \geq p.\]
\end{lemma}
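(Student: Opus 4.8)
The plan is to use the conformal invariance of $\CLE_\kappa$ together with a compactness/scaling argument to reduce to the case $D = \D$ and $z = 0$, and then to exhibit an explicit positive probability event in the unit disk. By conformal invariance, the law of $\Gamma$ in $D$ restricted to the loops surrounding $z$, viewed in the coordinate $\varphi\colon \D \to D$ with $\varphi(0)=z$, is a $\CLE_\kappa$ in $\D$; and the condition $\Loop_z^2 \subseteq B(z,\dist(z,\partial D))$ pulls back to a condition on $\Loop_0^2$ in $\D$. Using Corollary~\ref{cor::onefourth}, one has $\confrad(0;\D)=1$ and $\inrad(z;D)=\dist(z,\partial D)$, so that $\dist(z,\partial D) \geq \tfrac14\confrad(z;D)$. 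The Koebe distortion theorem then shows that $B(z,\dist(z,\partial D))$ contains the image under $\varphi$ of a disk $B(0,r_0)$ for a universal $r_0>0$: indeed, if $\varphi(B(0,r)) \subseteq B(z, \dist(z,\partial D))$ whenever $\tfrac{r}{(1-r)^2} \leq \tfrac14$, since $|\varphi'(0)| = \confrad(z;D)$ and $|\varphi(re^{i\theta})-z| \leq \tfrac{r}{(1-r)^2}|\varphi'(0)|$. So it suffices to find $r_0 = r_0(\kappa) > 0$ with $\P[\Loop_0^2 \subseteq B(0,r_0)] \geq p$ for $\CLE_\kappa$ in $\D$.

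Next I would establish that single positive lower bound in $\D$. Here the key input is the almost sure nesting structure of $\CLE_\kappa$: almost surely, the point $0$ is surrounded by an infinite nested sequence of loops $\Loop_0^1 \supsetneq \Loop_0^2 \supsetneq \cdots$, and hence $\confrad(0;\, \text{interior of } \Loop_0^j) \to 0$ as $j \to \infty$ (equivalently $\Loop_0^j$ shrinks to $0$). In particular $\Loop_0^j \subseteq B(0,r_0)$ eventually, so $J^\subset_{0,r_0} < \infty$ almost surely, and thus $\P[\Loop_0^{J^\subset_{0,r_0}} \subseteq B(0,r_0)] = 1$ for every fixed $r_0 > 0$. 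The subtlety is that I want the \emph{second} loop, $\Loop_0^2$, not merely some loop deep in the sequence. To get this I would run the following argument: pick $r_0$ small; on the event that $\Loop_0^1$ already has small diameter the claim is immediate, and otherwise I use the renewal/Markov-type structure of the nested loops. Concretely, conditionally on $\Loop_0^1$ and its interior $U_1$, the loops of $\Gamma$ surrounding $0$ inside $U_1$ form a $\CLE_\kappa$ in $U_1$; so the conditional law of whether $\Loop_0^2 \subseteq B(0,r_0)$ is governed by the same kind of event in $U_1$. Iterating, the probability that it takes more than $k$ generations for the surrounding loop to enter $B(0,r_0)$ decays, but that still does not directly bound $\Loop_0^2$.

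The cleanest route, and the one I expect to use, is instead to fix a \emph{large} $r_0$, close to $1$, rather than a small one. Then $\Loop_0^2 \subseteq B(0,r_0)$ is a high-probability event: $\Loop_0^1$ is a $\CLE_\kappa$ loop in $\D$ surrounding $0$, and with probability bounded below its interior $U_1$ is contained in $B(0,r_0)$ (this uses that $\CLE_\kappa$ loops surrounding the origin have conformal radius bounded away from $1$ with positive probability — a standard $\CLE$ fact, e.g.\ from the nesting/conformal-radius distribution of $\CLE$, equivalently from the SLE loop description); and on that event automatically $\Loop_0^2 \subset U_1 \subseteq B(0,r_0)$. But this proves the lemma only for $r_0$ near $1$, whereas the statement wants $B(z,\dist(z,\partial D))$, which via Koebe corresponds to $r_0 = $ the solution of $r/(1-r)^2 = 1/4$, i.e.\ $r_0 = (3-\sqrt{5})/2 \approx 0.38$ — not near $1$. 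So the real work is to show $\P[\Loop_0^2 \subseteq B(0,r_0)] \geq p$ for this \emph{specific moderate} $r_0$, which I would do by combining two ingredients: (i) with positive probability $\Loop_0^1$ is contained in $B(0,\tfrac{2}{3})$ say (positivity of the conformal radius distribution of the outermost $\CLE$ loop about a point, plus Koebe), and (ii) conditionally on $\Loop_0^1$ and $U_1$, rescaling $U_1$ to $\D$, with positive probability the outermost loop surrounding $0$ in that $\CLE_\kappa$ — which is $\Loop_0^2$ after rescaling — has conformal radius small enough that Koebe forces $\Loop_0^2 \subseteq B(0, r_0)$. Since (ii) is again a positive-probability event for a $\CLE_\kappa$ in a disk, uniformly over the shape of $U_1$ by conformal invariance, multiplying the two bounds gives the desired $p = p(\kappa) > 0$.

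\textbf{Main obstacle.} The delicate point is step (ii): I need a lower bound, uniform over all possible realizations of the domain $U_1$ (the interior of the first loop), on the probability that the \emph{next} surrounding loop is geometrically small relative to $U_1$. Conformal invariance reduces this to a single statement about $\CLE_\kappa$ in $\D$ — namely that the outermost loop about $0$ has conformal radius below any fixed threshold with positive probability — so the obstacle is really just to cite or re-derive the fact that the law of $\log \confrad(0; U_1^{(1)})$ (where $U_1^{(1)}$ is the region cut out by the outermost loop about $0$ in a $\CLE_\kappa$) has full support on $(0,\infty)$, or at least charges a neighborhood of any sufficiently large value. This is known from the work on $\CLE$ nesting (the conformal radius increments are i.i.d.\ with an explicit, fully-supported law), so the proof amounts to assembling these pieces; no genuinely new estimate is needed, but care with the Koebe constants and with the conditioning is required.
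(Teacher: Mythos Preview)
The paper does not actually prove this lemma; it is stated and implicitly imported from the companion paper \cite{extremes} (the subsequent Corollary~\ref{cor::loop_contain_stoch_dom} is explicitly referred there). So there is no in-paper argument to compare to, but your proposal has a real gap that would need to be fixed regardless.

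Your reduction via conformal invariance and the Koebe distortion theorem to the statement $\P[\Loop_0^2\subseteq B(0,r_0)]\geq p$ in $\D$ for a fixed $r_0\in(0,1)$ is correct. The problem is in how you propose to establish that positive probability. Both steps (i) and (ii) hinge on the claim that ``small conformal radius of the interior, plus Koebe, forces the loop itself into a small Euclidean ball.'' That implication is false: $\confrad(0;U_0^1)$ small only tells you (via the quarter theorem) that $\partial U_0^1$ comes \emph{close} to $0$, not that $\Loop_0^1$ is \emph{contained} in a small ball. A loop can have arbitrarily small conformal radius about $0$ while still reaching out to near $\partial\D$.

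More seriously, for $\kappa\in(4,8)$ your step (i) is not merely unjustified but false. In that range the outermost $\CLE_\kappa$ loop surrounding $0$ almost surely touches $\partial\D$, so $\P[\Loop_0^1\subseteq B(0,2/3)]=0$. This is exactly why the lemma is stated for $\Loop_z^2$ rather than $\Loop_z^1$: one needs the first loop to pull the configuration away from $\partial D$, and then a second step to get containment in the inscribed ball. The full-support property of the log-conformal-radius law that you cite as the ``main obstacle'' is a necessary ingredient but is not by itself sufficient; one needs a genuine geometric input (for instance an annulus-crossing or exploration-based argument, of the type appearing as Lemma~\ref{I-lem::annulus-loop} in \cite{extremes}) to control where the \emph{whole} loop lies, not just its conformal radius.
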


\begin{corollary}
  \label{cor::loop_contain_stoch_dom}
  $J^\subset_{z,r} - J^\cap_{z,r}$ is
  stochastically dominated by $2\wt{N}$ where $\wt{N}$ is a geometric random
  variable with parameter $p = p(\kappa) > 0$ which depends only on
  $\kappa \in (8/3,8)$.
\end{corollary}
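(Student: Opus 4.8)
The plan is a renewal argument. Starting from the first nested loop around $z$ that meets $B(z,r)$, namely $\Loop_z^{J^\cap_{z,r}}$, I would explore inward two loops at a time, using Lemma~\ref{lem::loop_contain_prop} inside the sub-domain cut out by the current loop to show that each such double step has probability at least $p$ of producing a loop entirely inside $B(z,r)$; counting the number of double steps needed then yields the geometric bound.

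Concretely, write $\tau = J^\cap_{z,r}$, let $U_j$ denote the (bounded, simply connected) domain surrounded by $\Loop_z^j$, and let $\mathcal F_j$ be the $\sigma$-algebra generated by the loops of $\Gamma$ not surrounded by $\Loop_z^j$. By the renewal (domain Markov) property of $\CLE_\kappa$, conditionally on $\mathcal F_j$ the loops of $\Gamma$ surrounded by $\Loop_z^j$ form a fresh $\CLE_\kappa$ in $U_j$ whose $k$th outermost loop around $z$ is $\Loop_z^{j+k}$; also $\{\tau \le m\} \in \mathcal F_m$, so $\tau$ is a stopping time for $(\mathcal F_j)$. The first thing to check is an elementary geometric fact: the straight segment from $z$ to a nearest point of $\Loop_z^j$ contains no other point of that loop, hence lies in $U_j$, so $\dist(z,\partial U_j) \le \dist(z,\Loop_z^j)$; and $\dist(z,\Loop_z^j)$ is nonincreasing in $j$, since the segment from $z$ to a nearest point of $\Loop_z^{j-1}$ must cross the inner loop $\Loop_z^j$. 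As $\Loop_z^\tau$ meets $B(z,r)$, this gives $\dist(z,\partial U_j) < r$, hence $B(z,\dist(z,\partial U_j)) \subseteq B(z,r)$, for every $j \ge \tau$.

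For $k \ge 1$ put $A_k = \{\Loop_z^{\tau+2k} \subseteq B(z,r)\}$. Conditionally on $\mathcal F_{\tau+2(k-1)}$, the loops surrounded by $\Loop_z^{\tau+2(k-1)}$ form a fresh $\CLE_\kappa$ in $U_{\tau+2(k-1)}$ in which $\Loop_z^{\tau+2k}$ is the second outermost loop around $z$, so Lemma~\ref{lem::loop_contain_prop} yields $\P[\Loop_z^{\tau+2k} \subseteq B(z,\dist(z,\partial U_{\tau+2(k-1)})) \mid \mathcal F_{\tau+2(k-1)}] \ge p$; since $\tau + 2(k-1) \ge \tau$ the ball lies in $B(z,r)$, so $\P[A_k \mid \mathcal F_{\tau+2(k-1)}] \ge p$ a.s. With $K = \min\{k \ge 1 : A_k \text{ holds}\}$, the event $\{K \ge k\} = \bigcap_{i<k} A_i^c$ is $\mathcal F_{\tau+2(k-1)}$-measurable, so $\P[K \ge k+1] \le (1-p)\,\P[K \ge k]$ and hence $\P[K \ge k] \le (1-p)^{k-1}$; thus $K$ (which is a.s.\ finite) is stochastically dominated by a geometric random variable $\wt N$ of parameter $p$. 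Finally, when $A_K$ holds $\Loop_z^{\tau+2K}$ surrounds $z$ and lies in $B(z,r)$, so $J^\subset_{z,r} \le \tau + 2K = J^\cap_{z,r} + 2K$; combined with $J^\cap_{z,r} \le J^\subset_{z,r}$ (any loop contained in $B(z,r)$ meets $B(z,r)$), this gives $0 \le J^\subset_{z,r} - J^\cap_{z,r} \le 2K$, which is stochastically dominated by $2\wt N$.

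The main point to be careful about is the repeated appeal to the strong Markov/renewal property of $\CLE_\kappa$ at the random indices $\tau + 2(k-1)$, together with the attendant re-indexing (that the second outermost loop around $z$ in the CLE inside $U_j$ is exactly $\Loop_z^{j+2}$) and the measurability of $\{K\ge k\}$ with respect to $\mathcal F_{\tau+2(k-1)}$; once those are in place, the remainder is the standard ``geometrically many trials'' estimate and should present no real difficulty.
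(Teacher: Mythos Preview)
Your argument is correct and is exactly the natural renewal argument derived from Lemma~\ref{lem::loop_contain_prop}: the paper itself does not give a proof here but simply cites the companion paper \cite{extremes}, where the same two-loops-at-a-time geometric-trials argument is carried out. Your handling of the stopping-time and measurability issues, and of the inclusion $B(z,\dist(z,\partial U_{\tau+2(k-1)}))\subseteq B(z,r)$ for $k\ge 1$, is sound.
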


\begin{proof}
  See Corollary~\ref{I-cor::loop_contain_stoch_dom} in \cite{extremes}.
\end{proof}

We use the following estimate for the overshoot of a random walk the
  first time it crosses a given threshold. We will apply this lemma to the
  random walk which tracks the negative log conformal radius of the
  sequence of CLE loops surrounding a given point $z\in D$, as viewed from
  $z$. See Lemma~\ref{I-lem::firsthitting} in \cite{extremes} for a proof.

\begin{lemma}
\label{lem::firsthitting}
Suppose $\{X_j\}_{j\in \N}$ are nonnegative i.i.d.\
random variables for which $\E[X_1] > 0$ and $\E[e^{\mgfparam_0
 X_1}]<\infty$ for some $\mgfparam_0>0$.
 Let $S_n=\sum_{j=1}^n X_j$
 and $\tau_x =
\inf\{n \geq 0 : S_n \geq x\}$. Then there exists $C>0$ (depending on
the law of $X_1$ and $\lambda_0$) such that $\P[S_{\tau_{x}} - x \geq \alpha] \leq C\exp(-\mgfparam_0
\alpha)$ for all $x\geq 0$ and $\alpha>0$.
\end{lemma}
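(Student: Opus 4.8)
The plan is an elementary last-passage decomposition followed by a Chernoff bound, the only real work being a uniform-in-$x$ estimate. First I would dispose of the trivial case $x=0$ (then $\tau_0=0$ and $S_{\tau_0}-x=0$) and assume $x>0$, so that $S_0=0<x$, $\tau_x\ge 1$, and $\tau_x<\infty$ almost surely since $S_n\to\infty$ (as $\E[X_1]>0$). Because $(S_n)$ is nondecreasing, $\{\tau_x=n+1\}=\{S_n<x\le S_{n+1}\}$, and $X_{n+1}$ is independent of $\CF_n\colonequals\sigma(X_1,\dots,X_n)$, so that $\{S_{\tau_x}-x\ge\alpha\}\cap\{\tau_x=n+1\}=\{S_n<x\}\cap\{X_{n+1}\ge x-S_n+\alpha\}$. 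Writing $\bar F(t)\colonequals\P[X_1\ge t]$ and summing over $n\ge 0$ (using $\{S_n<x\}=\{\tau_x>n\}$ and Tonelli) would give
\[
\P[S_{\tau_x}-x\ge\alpha]=\E\Bigl[\sum_{n<\tau_x}\bar F\bigl(x-S_n+\alpha\bigr)\Bigr].
\]

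Next I would apply Markov's inequality in the form $\bar F(t)\le\E[e^{\mgfparam_0 X_1}]\,e^{-\mgfparam_0 t}$. Since $x-S_n>0$ on $\{n<\tau_x\}$, this gives $\bar F(x-S_n+\alpha)\le\E[e^{\mgfparam_0 X_1}]\,e^{-\mgfparam_0\alpha}\,e^{-\mgfparam_0(x-S_n)}$, hence
\[
\P[S_{\tau_x}-x\ge\alpha]\le\E[e^{\mgfparam_0 X_1}]\,e^{-\mgfparam_0\alpha}\,\E\Bigl[\sum_{n<\tau_x}e^{-\mgfparam_0(x-S_n)}\Bigr],
\]
so the whole problem reduces to bounding $\E\bigl[\sum_{n<\tau_x}e^{-\mgfparam_0(x-S_n)}\bigr]$ by a constant independent of $x$. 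One cannot bound this by $\E[\tau_x]$, which is $\sim x/\E[X_1]\to\infty$; the exponential weight has to be exploited. Grouping the indices by $k=\lfloor x-S_n\rfloor\ge 0$,
\[
\sum_{n<\tau_x}e^{-\mgfparam_0(x-S_n)}\le\sum_{k\ge 0}e^{-\mgfparam_0 k}\,\bigl|\{n\ge 0:\ S_n\in(x-k-1,\,x-k]\}\bigr|,
\]
so it is enough to bound, uniformly in $a\in\R$, the expected number of renewal times $S_n$ lying in the unit interval $(a,a+1]$.

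That uniform renewal bound I would obtain from the strong Markov property at $\sigma\colonequals\inf\{n:S_n>a\}$ (finite almost surely): the post-$\sigma$ increments form a fresh copy $(S'_m)$ of $(S_n)$, so the count of $S_n$'s in $(a,a+1]$ is stochastically dominated by $\nu\colonequals\inf\{m\ge 1:S'_m>1\}$, and $\P[\nu>m]=\P[S'_m\le 1]\le e^{\theta}\bigl(\E[e^{-\theta X_1}]\bigr)^m$ for any $\theta>0$, with $\E[e^{-\theta X_1}]<1$ because $\P[X_1>0]>0$ (which holds since $\E[X_1]>0$); hence $\E[\nu]<\infty$ and is independent of $a$. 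Assembling the bounds yields $\E\bigl[\sum_{n<\tau_x}e^{-\mgfparam_0(x-S_n)}\bigr]\le\E[\nu]\,(1-e^{-\mgfparam_0})^{-1}$, and therefore the lemma with $C=\E[\nu]\,\E[e^{\mgfparam_0 X_1}]\,(1-e^{-\mgfparam_0})^{-1}$.

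The main obstacle — essentially the only subtlety — is precisely this uniformity in $x$: one must avoid routing the estimate through $\tau_x$ and instead let the exponential weight absorb the linear growth of the number of renewals before level $x$, which is what the shell decomposition above accomplishes. (An alternative would be to recognize $S_{\tau_x}-x$ as stochastically bounded by the equilibrium/overshoot distribution of $X_1$ and to quote that its tail inherits the exponential decay of $\bar F$; the argument above avoids appealing to that renewal-theoretic fact.)
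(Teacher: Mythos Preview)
Your argument is correct. The last-passage decomposition together with the Chernoff bound reduces the problem to the uniform estimate on $\E\bigl[\sum_{n<\tau_x}e^{-\mgfparam_0(x-S_n)}\bigr]$, and your shell decomposition handles that cleanly: the key observation that the expected number of renewal times in any unit interval is bounded by $\E[\nu]$ (via the strong Markov property at $\sigma$ and the monotonicity of $(S'_m)$, which gives $\P[\nu>m]=\P[S'_m\le 1]$ exactly) is correct and yields a constant independent of~$x$. One small remark: in the equality $\{S_{\tau_x}-x\ge\alpha\}\cap\{\tau_x=n+1\}=\{S_n<x\}\cap\{X_{n+1}\ge x-S_n+\alpha\}$ you are implicitly using $\alpha>0$ to recover the condition $S_{n+1}\ge x$ from the right-hand side, which is fine since the statement only concerns $\alpha>0$.

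As for comparison with the paper: the present paper does not actually prove this lemma but defers to the companion article \cite{extremes} (see the sentence ``See Lemma~\ref{I-lem::firsthitting} in \cite{extremes} for a proof'' immediately preceding the statement). So there is no in-paper proof to compare against. Your approach is a standard and efficient one; the alternative you mention at the end --- identifying $S_{\tau_x}-x$ with the forward residual lifetime and invoking the fact that its distribution is stochastically dominated uniformly in $x$ by a fixed law with the same exponential tail as $X_1$ --- is essentially what your argument establishes from first principles.
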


The following lemma provides a quantitative version of the statement that
it is unlikely that there exists a CLE loop surrounding the inner boundary
but not the outer boundary of a given small, thin annulus.
We make use of a quantitative coupling between CLE in large domains and
full-plane CLE, which appears as Theorem~\ref{thm::full-plane} in the appendix.

\begin{lemma} \label{lem::mean_loops_inr} Let $\Gamma$ be a $\CLE_\kappa$
  in $\D$. There exist constants $C>0$, $\alpha>0$, and $\eps_0>0$
  depending only on $\kappa$ such that for $0<\eps<\eps_0$ and $0\leq
  \delta < 1/2$,
  \begin{equation} \label{eq::suffices}
    \E[ \Loopcount_{0}(\eps(1-\delta)) - \Loopcount_{0}(\eps)] \leq C \delta + C \eps^\alpha\,.
  \end{equation}
\end{lemma}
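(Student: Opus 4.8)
The plan is to transfer the estimate to the full‑plane $\CLE_\kappa$, where scale invariance turns the left‑hand side into the expected number of points of a stationary point process in a window of length of order $\delta$. Throughout, write $\Loop_0^j$ for the $j$th outermost loop of $\Gamma$ surrounding $0$, let $\CU_j$ be the component of $\D\setminus\Loop_0^j$ containing $0$, and put $R_j=\inrad(0;\CU_j)$ and $C_j=\confrad(0;\CU_j)$. Then $\Loop_0^j$ surrounds $B(0,\eta)$ exactly when $R_j\ge\eta$, the sequence $R_j$ is non‑increasing (the $\CU_j$ are nested), and by Corollary~\ref{cor::onefourth} we have $C_j/4\le R_j\le C_j$. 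Consequently
\[
N^\eps:=\Loopcount_0(\eps(1-\delta))-\Loopcount_0(\eps)=\#\{j\ge1:\eps(1-\delta)\le R_j<\eps\}
\]
counts the loops crossing the thin annulus $B(0,\eps)\setminus\ol{B(0,\eps(1-\delta))}$, and for $\delta<1/2$ it is at most $\#\{j:\eps/2\le C_j<4\eps\}$. The increments $-\log(C_j/C_{j-1})$ are i.i.d., strictly positive, and have a finite exponential moment --- this is precisely the conformal‑radius renewal structure underlying Lemma~\ref{lem::firsthitting} --- so the number of $j$ with $-\log C_j$ in the window $(\log\tfrac1{4\eps},\log\tfrac2{\eps}]$ of fixed width $\log 8$ has all moments bounded by a constant depending only on $\kappa$, uniformly over $\eps<\eps_0$ and $\delta<1/2$. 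In particular $\E[(N^\eps)^2]\le\const(\kappa)$.

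Next I would invoke the quantitative coupling of Theorem~\ref{thm::full-plane}: for $\eps$ below a $\kappa$‑dependent threshold $\eps_0$, couple $\Gamma$ with a full‑plane $\CLE_\kappa$ $\Gamma^\infty$ so that, off an event $E_\eps$ with $\P[E_\eps]\le\const(\kappa)\,\eps^{\alpha_0}$, the ensembles $\Gamma$ and $\Gamma^\infty$ contain exactly the same loops surrounding $0$ of conformal radius at least $\eps/2$. Every loop of $\Gamma$ contributing to $N^\eps$, and every loop of $\Gamma^\infty$ contributing to
\[
N^{\infty,\eps}:=\#\{\Loop\in\Gamma^\infty\text{ surrounding }0:\eps(1-\delta)\le\inrad(0;\Loop^\circ)<\eps\},
\]
where $\Loop^\circ$ denotes the region enclosed by $\Loop$, has conformal radius at least its inradius, which is at least $\eps(1-\delta)\ge\eps/2$; hence $N^\eps=N^{\infty,\eps}$ on $E_\eps^c$. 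Using $N^{\infty,\eps}\ge0$ and Cauchy--Schwarz,
\begin{align*}
\E[N^\eps]
&=\E[N^{\infty,\eps}\one_{E_\eps^c}]+\E[N^\eps\one_{E_\eps}]\\
&\le\E[N^{\infty,\eps}]+\big(\E[(N^\eps)^2]\,\P[E_\eps]\big)^{1/2}\\
&\le\E[N^{\infty,\eps}]+\const(\kappa)\,\eps^{\alpha_0/2}\,.
\end{align*}

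It remains to bound $\E[N^{\infty,\eps}]$, and here scale invariance does the work. Applying the map $z\mapsto z/\eps$, which preserves the law of $\Gamma^\infty$, gives $\E[N^{\infty,\eps}]=\E[\#\{\Loop\in\Gamma^\infty\text{ surrounding }0:1-\delta\le\inrad(0;\Loop^\circ)<1\}]$, a quantity free of $\eps$. Consider the point process $\Pi:=\{-\log\inrad(0;\Loop^\circ):\Loop\in\Gamma^\infty\text{ surrounds }0\}$ on $\R$. Scaling $\Gamma^\infty$ by $e^{-t}$ translates $\Pi$ by $t$ and leaves its law unchanged, so $\Pi$ is stationary; and since any loop around $0$ with $\inrad\in[e^{-1},1)$ has $\confrad\in[e^{-1},4)$, while the conformal‑radius point process of $\Gamma^\infty$ around $0$ is stationary with finite intensity (the standard conformal‑radius renewal structure), $\Pi$ has finite intensity $\lambda=\lambda(\kappa)$. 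Therefore $\E[N^{\infty,\eps}]=\E[\#(\Pi\cap(0,-\log(1-\delta)])]=\lambda\,(-\log(1-\delta))\le 2\lambda\delta$ for $0\le\delta<1/2$. Combining the displays yields $\E[\Loopcount_0(\eps(1-\delta))-\Loopcount_0(\eps)]\le 2\lambda\delta+\const(\kappa)\,\eps^{\alpha_0/2}$, which is the claim with $\alpha=\alpha_0/2$ and $C=\max(2\lambda,\const(\kappa))$.

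The main obstacle is the coupling step: one must read off from Theorem~\ref{thm::full-plane} a coupling in which the loops of conformal radius $\gtrsim\eps$ surrounding $0$ genuinely \emph{coincide} in the two ensembles --- not merely lie close in some metric --- with failure probability polynomial in $\eps$, and then pair this with the moment bound on $N^\eps$, which has to be uniform in both $\eps$ and $\delta$; that bound in turn rests on the robust comparison between inradius and conformal radius and on the exponential integrability of the conformal‑radius increments. The remaining ingredients --- the nesting and renewal estimates for the moment bound, and the stationarity and finite intensity of $\Pi$ in the full plane --- are comparatively soft, and it is precisely the passage to the scale‑invariant full‑plane picture that upgrades the trivial $O(1)$ bound to the required $O(\delta)$.
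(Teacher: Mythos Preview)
Your overall strategy---couple with full-plane $\CLE_\kappa$ via Theorem~\ref{thm::full-plane}, then exploit scale invariance to reduce to a stationary point process in a short window---is exactly the paper's approach. Your handling of the bad event by a uniform second-moment bound and Cauchy--Schwarz is a legitimate alternative to the paper's argument (the paper observes that failure of the low-distortion coupling is detectable from the boundary of $\Gamma|_{B(0,\eps)^+}$, so conditionally the interior is still an unbiased $\CLE_\kappa$ and the conditional expectation of the count is $O(1)$); your route costs a square root in the exponent but avoids that measurability point.

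The gap is exactly where you flag it, and it is a real one. Theorem~\ref{thm::full-plane} does \emph{not} furnish a coupling in which the loops of $\Gamma$ and $\Gamma^\infty$ around $0$ with conformal radius $\gtrsim\eps$ \emph{coincide}; it supplies only a conformal map $\varphi$ between the two restricted ensembles with $|\varphi'-1|<C\eps^{\alpha_0}$. Under such a map a loop of inradius $r$ is carried to a loop of inradius $r(1+O(\eps^{\alpha_0}))$, not $r$ itself, so the equality $N^\eps=N^{\infty,\eps}$ on $E_\eps^c$ is not available, and there is no evident way to extract exact coincidence from the statement of Theorem~\ref{thm::full-plane}. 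The paper does not attempt to upgrade to exact coincidence; instead it absorbs the distortion into the window. Writing $V_n=-\log\inrad\Loop_0^n$ in each ensemble, on the good event one has
\[
\#\Bigl\{n:\log\tfrac1\eps\le V_n^\D<\log\tfrac1{\eps(1-\delta)}\Bigr\}
\le
\#\Bigl\{n:\log\tfrac1\eps-O(\eps^{\alpha_0})\le V_n^\C<\log\tfrac1{\eps(1-\delta)}+O(\eps^{\alpha_0})\Bigr\},
\]
and stationarity of the full-plane inradius process gives the right-hand side expectation $\lptyp\bigl(-\log(1-\delta)+O(\eps^{\alpha_0})\bigr)=O(\delta)+O(\eps^{\alpha_0})$. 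With this window-enlargement step inserted in place of your exact-coincidence claim, the rest of your argument goes through unchanged.
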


\begin{proof}
   We couple the $\CLE_\kappa$ $\Gamma_\D=\Gamma$ in the disk with a
    whole-plane $\CLE_\kappa$ $\Gamma_\C$ as in
    Theorem~\ref{thm::full-plane}.  Index the loops of $\Gamma_\C$
    surrounding $0$ by $\Z$ in such a way that $\Loop_0^n(\Gamma_\C)$ and
    $\Loop_0^n(\Gamma_\D)$ are exponentially close for large $n$.  For
    $n\in\N$ define $V^\D_n=-\log\inrad \Loop_0^n(\Gamma_\D)$, and for
    $n\in\Z$ define $V^\C_n=-\log\inrad \Loop_0^n(\Gamma_\C)$.  Since
    whole-plane $\CLE_\kappa$ is scale invariant, the set
      $\{V^\C_n\,:n\in \Z\}$ is translation invariant.  Using
    Corollary~\ref{cor::onefourth} to compare $(V^\C_n)_{n\in \Z}$ to the
    sequence of log conformal radii of the loops of $\Gamma_\C$ surrounding
    the origin, the translation invariance implies
  \[ \E\left[\#\left\{n\,:\,a\leq V^\C_n <b\right\}\right] = \lptyp(b-a)\,. \]

  Let $\alpha$ and the term \textit{low distortion\/} be defined as in
    the statement of Theorem~\ref{thm::full-plane}.  With probability
  $1-O(\eps^\alpha)$ there is a low distortion map from
  $\Gamma_\D|_{B(0,\eps)^+}$ to $\Gamma_\C|_{B(0,\eps)^+}$, and on this
  event, we can bound
 \begin{multline*}
 \# \left\{n\,:\,\log\frac{1}{\eps}\leq V^\D_n < \log\frac{1}{\eps(1-\delta)}\right\} \\
  \leq \#\left\{n\,:\, \log\frac{1}{\eps}-O(\eps^\alpha) \leq V^\C_n < \log\frac{1}{\eps(1-\delta)} +O(\eps^\alpha)\right\}\,.
\end{multline*}

On the event that there is no such low distortion map, this can be detected
by comparing the boundaries of $\Gamma_\D|_{B(0,\eps)^+}$ and
$\Gamma_\C|_{B(0,\eps)^+}$, so that conditional on this unlikely event,
$\Gamma_\D|_{B(0,\eps)^+}$ is still an unbiased $\CLE_\kappa$ conformally
mapped to the region surrounded by the boundary of
$\Gamma_\D|_{B(0,\eps)^+}$.  In particular, the sequence of log-conformal
radii of loops of $\Gamma_\D|_{B(0,\eps)^+}$ surrounding $0$ is a renewal
process, which together with the \hyperref[thm::distortion]{Koebe distortion theorem}
and the bound $\delta\leq 1/2$ imply
  \[ \E[ \Loopcount_{0}(\eps(1-\delta)) - \Loopcount_{0}(\eps) \,|\,
  \text{no low distortion map}] \leq \text{constant}\,. \]
Combining these bounds yields \eqref{eq::suffices}.
\end{proof}

\begin{lemma}
\label{lem::mean_loops_lcr}
\label{lem::small_second_moment}
For each $\kappa\in(8/3,8)$ and integer $j\in\N$, there are constants
$C>0$, $\alpha>0$, and $\eps_0>0$ (depending only on $\kappa$ and $j$)
such that whenever $D$ is a simply connected proper domain, $z\in D$,
$\varphi$ is a conformal transformation of $D$,
and $0<\eps<\eps_0$, if $\Gamma$ is a $\CLE_\kappa$ in $D$, then
\[
\E\bigg[ \big|\Loopcount_z(\eps \confrad(z;D) ;\Gamma) - \Loopcount_{\varphi(z)}(\eps \confrad(\varphi(z);\varphi(D));\varphi(\Gamma)) \big|^j\bigg] \leq C \eps^\alpha\,.
\]
\end{lemma}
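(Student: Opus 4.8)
The plan is to squeeze both loop counts between the same pair of integers and thereby reduce to an estimate for a single $\CLE_\kappa$ in the disk. Put $R_1=\confrad(z;D)$ and $R_2=\confrad(\varphi(z);\varphi(D))$, so $R_2=|\varphi'(z)|R_1$. Choose conformal maps $\psi_1\colon D\to\D$ with $\psi_1(z)=0$ and $\psi_2\colon\varphi(D)\to\D$ with $\psi_2(\varphi(z))=0$, and set $\Gamma^{(1)}=\psi_1(\Gamma)$ and $\Gamma^{(2)}=\psi_2(\varphi(\Gamma))$, both $\CLE_\kappa$'s in $\D$. The key observation is that $\Phi:=\psi_2\circ\varphi\circ\psi_1^{-1}$ is a conformal automorphism of $\D$ fixing $0$, hence a rotation, and $\Gamma^{(2)}=\Phi(\Gamma^{(1)})$; since a rotation about the origin fixes every ball $B(0,r)$, we get $\Loopcount_0(r;\Gamma^{(2)})=\Loopcount_0(r;\Gamma^{(1)})$ almost surely for all $r>0$. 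Because $\eps R_1\le 4\eps\,\inrad(z;D)$ by Corollary~\ref{cor::onefourth}, the ball $B(z,\eps R_1)$ is small compared with $\inrad(z;D)$, so applying the \hyperref[thm::distortion]{Koebe distortion theorem} to $\psi_1$ rescaled by $\inrad(z;D)$ (and using $|\psi_1'(z)|=1/R_1$) gives, for $\eps$ small,
\[
B\!\left(0,\tfrac{\eps}{(1+4\eps)^2}\right)\subseteq \psi_1\!\left(B(z,\eps R_1)\right)\subseteq B\!\left(0,\tfrac{\eps}{(1-4\eps)^2}\right),
\]
and likewise for $\psi_2$ and $B(\varphi(z),\eps R_2)$. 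A loop surrounds a set iff its image surrounds the image of that set, and $r\mapsto\Loopcount_0(r;\cdot)$ is non-increasing, so these inclusions sandwich \emph{both} $\Loopcount_z(\eps R_1;\Gamma)$ and $\Loopcount_{\varphi(z)}(\eps R_2;\varphi(\Gamma))$ between $\Loopcount_0(\tfrac{\eps}{(1-4\eps)^2};\Gamma^{(1)})$ and $\Loopcount_0(\tfrac{\eps}{(1+4\eps)^2};\Gamma^{(1)})$ (using the rotation identity to replace $\Gamma^{(2)}$ by $\Gamma^{(1)}$ in the second case). Hence
\[
\big|\Loopcount_z(\eps R_1;\Gamma)-\Loopcount_{\varphi(z)}(\eps R_2;\varphi(\Gamma))\big|\ \le\ X_\eps\ :=\ \Loopcount_0\!\big(\eps''(1-\delta_\eps);\Gamma^{(1)}\big)-\Loopcount_0\!\big(\eps'';\Gamma^{(1)}\big),
\]
where $\eps''=\tfrac{\eps}{(1-4\eps)^2}$ and $\delta_\eps=\tfrac{16\eps}{(1+4\eps)^2}$ (one checks $\eps''(1-\delta_\eps)=\tfrac{\eps}{(1+4\eps)^2}$), and it suffices to show $\E[X_\eps^j]\le C\eps^\alpha$. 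Note $\eps''=O(\eps)$ and $\delta_\eps=O(\eps)$ as $\eps\to0$.

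\textbf{The moment bound for $X_\eps$.} Shrinking $\eps_0$ so that $\eps''$ is below the threshold of Lemma~\ref{lem::mean_loops_inr} and $\delta_\eps<1/2$, that lemma gives at once $\E[X_\eps]\le C\delta_\eps+C(\eps'')^\alpha\le C'\eps^{\alpha'}$ for $\alpha'=\min(1,\alpha)>0$. For $j\ge 2$ we combine this with a uniform exponential tail. Write $\Loop_0^1,\Loop_0^2,\dots$ for the loops of $\Gamma^{(1)}$ surrounding $0$; since $\inrad(0;\interior(\Loop_0^k))=\dist(0,\Loop_0^k)$ is non-increasing in $k$, the loops counted by $X_\eps$ (those with $\inrad(0;\interior(\Loop_0^k))\in[\eps''(1-\delta_\eps),\eps'')$) occupy a block of consecutive indices $k=m+1,\dots,m+X_\eps$ with $m=\Loopcount_0(\eps'';\Gamma^{(1)})$. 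On $\{X_\eps\ge1\}$ condition on $\Loop^\star:=\Loop_0^{m+1}$ and on all loops of $\Gamma^{(1)}$ not surrounded by $\Loop^\star$; by the conformal Markov property of CLE, the loops surrounded by $\Loop^\star$ form a fresh $\CLE_\kappa$ in $\interior(\Loop^\star)$. Mapping $\interior(\Loop^\star)$ to $\D$ fixing $0$ and using $\confrad(0;\interior(\Loop^\star))\le 4\,\inrad(0;\interior(\Loop^\star))<4\eps''$ together with $\inrad(0;\interior(\Loop_0^{m+1+i}))\ge\eps''(1-\delta_\eps)\ge\eps''/2$ and Corollary~\ref{cor::onefourth}, each of the remaining $X_\eps-1$ loops maps to a loop of this fresh $\CLE_\kappa$ surrounding $B(0,1/32)$. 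Thus $X_\eps-1$ is, conditionally, stochastically dominated by $\Loopcount_0(1/32;\Gamma')$ for a $\CLE_\kappa$ $\Gamma'$ in $\D$, which has exponential tails with rate depending only on $\kappa$ (the negative log conformal radii of the loops of $\Gamma'$ surrounding $0$ form a renewal process to which Lemma~\ref{lem::firsthitting} applies; see also Corollary~\ref{cor::loop_contain_stoch_dom}). Hence there are $C_\kappa,q_\kappa$, independent of $\eps$, with $\P[X_\eps\ge k\mid X_\eps\ge1]\le C_\kappa q_\kappa^{\,k-1}$, and combining with $\P[X_\eps\ge1]\le\E[X_\eps]\le C'\eps^{\alpha'}$ and $\E[X_\eps^j]=\sum_{k\ge1}(k^j-(k-1)^j)\P[X_\eps\ge k]$ yields $\E[X_\eps^j]\le C_{\kappa,j}\,\eps^{\alpha'}$, which is the assertion (after renaming the exponent $\alpha$).

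\textbf{Main obstacle.} The conceptual crux is the first step: once one notices that composing the two uniformizing maps produces a rotation, both loop counts are trapped between the \emph{same} two integers, and the whole problem collapses to controlling $\Loopcount_0(a;\Gamma^{(1)})-\Loopcount_0(b;\Gamma^{(1)})$ for two radii $a<b$ differing by a factor $1+O(\eps)$. The point requiring genuine care is then the upgrade from the first moment to all moments: Lemma~\ref{lem::mean_loops_inr} only controls $\E[X_\eps]$, so one must separately produce an exponential tail for $X_\eps$ that is \emph{uniform in} $\eps$ — supplied here by the CLE Markov property and the renewal structure of the loop conformal radii — in order to conclude $\E[X_\eps^j]\lesssim\E[X_\eps]$.
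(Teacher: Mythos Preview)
Your proof is correct and follows essentially the same approach as the paper's: both reduce to the unit disk via Koebe distortion (the paper normalizes both conformal radii to $1$ and then factors through $\D$, arriving at the same rotation fact implicitly), bound the difference by $X=\Loopcount_0(a;\Gamma^{(1)})-\Loopcount_0(b;\Gamma^{(1)})$ for two radii with $b-a=O(\eps^2)$, invoke Lemma~\ref{lem::mean_loops_inr} for $j=1$, and upgrade to $j\ge 2$ via a geometric tail. The paper obtains the tail by showing $\P[X\ge k+1]\le p\,\P[X\ge k]$ directly---each successive loop has a uniformly positive chance of having conformal radius below $\tfrac14(\eps-3\eps^2)$---which is the same mechanism as your conditioning on $\Loop^\star$ and remapping its interior to $\D$.
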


\begin{proof}
Observe that translating and scaling the domain $D$ or its conformal
  image $\varphi(D)$ has no effect on the loop counts, so we assume
  without loss of generality that $z=0$, $\varphi(z)=0$,
  $\confrad(z;D)=1$, and $\confrad(\varphi(z);\varphi(D))=1$.  Observe
  also that it suffices to prove this lemma in the case that the
  domain $D$ is the unit disk $\D$, since a general $\varphi$ may be
  expressed as the composition $\varphi = \varphi_2 \circ
  \varphi_1^{-1}$ where $\varphi_1$ and $\varphi_2$ are conformal
  transformations of the unit disk with $\varphi_i(0)=0$ and
  $\varphi_i'(0)=1$, and the desired bound follows from the triangle
  inequality.

  Let $\Gamma$ be a $\CLE_\kappa$ on $\D$, and let $\acute\Gamma=\varphi(\Gamma)$.
  By the
  \hyperref[thm::distortion]{Koebe distortion theorem} and the
  elementary inequality
  \begin{equation} \label{eq::koebe_estimate}
    1-3r\leq\frac{1}{(1+r)^2}\leq\frac{1}{(1-r)^2}\leq 1+3r,
  \quad\text{for $r$ small enough},
  \end{equation}
  we have
  \[
  B(0,\eps-3\eps^2)\subset
  \varphi^{-1}(B(0,\eps)) \subset B(0,\eps+3\eps^2)\,,
  \]
  for small enough $\eps$.  Hence $
  \Loopcount_0 (\eps+3\eps^2; \Gamma) \leq \Loopcount_0 (\eps;
  \acute{\Gamma}) \leq \Loopcount_0 (\eps-3\eps^2; \Gamma)$,
  and so for
  \[ X\colonequals \Loopcount_0(\eps-3\eps^2;\Gamma) - \Loopcount_0(\eps+3\eps^2;\Gamma) \]
  we have
  $|\Loopcount_0(\eps;\acute{\Gamma}) - \Loopcount_0(\eps;\Gamma)| \leq X$.

  By Lemma~\ref{lem::mean_loops_inr} we have $\E[X]=O(\eps^\alpha)$, which proves the case $j=1$.

  Notice that the conformal radius of
  every new loop after the first that intersects $B(0,\eps+3\eps^2)$ has a
  uniformly positive probability of being less than
  $\frac{1}{4}(\eps-3\eps^2)$, conditioned on the previous loop.  By the
  \hyperref[cor::onefourth]{Koebe quarter theorem}, such a loop
  intersects $B(0,\eps-3\eps^2)$.
  Thus for some $p<1$ we have $\P[X \geq k+1] \leq p \P[X \geq k]$ for $k\geq 0$.
  Hence
  \begin{align*}
 \E[X^j] = \sum_{k=1}^\infty k^j \P[X=k]
  &\leq \sum_{k=1}^{\infty}k^j p^k\P[X=1]
  \leq \left(\sum_{k=1}^{\infty}k^j p^k\right) \E[X]=O(\eps^\alpha)\,,
  \end{align*}
  which proves the cases $j>1$.
\end{proof}

\section{Co-nesting estimates}
\label{sec::tail}

We use the following lemma in the proof of Theorem~\ref{thm::surround_two_point_bound}:

\begin{lemma}
\label{lem::firsthitting_mg}
Let $\mgfparam_0>0$, and suppose $\{X_j\}_{j\in \N}$ are nonnegative
i.i.d.\ random variables for which $\E[X_1] > 0$ and $\E[e^{\mgfparam_0
  X_1}]<\infty$. Let $\Lambda(\mgfparam) = \log \E[ e^{\mgfparam X_1}]$ and
let $S_n=\sum_{j=1}^n X_j$.  For $x > 0$, define $\tau_x = \inf\{n \geq 0 :
S_n \geq x\}$.  For $\mgfparam <\mgfparam_0$, let
\[ M_n^\mgfparam = \exp(\lambda S_n - \Lambda(\lambda) n).\]
Then for $\mgfparam<\mgfparam_0$ and $x\geq0$, the random variables $\{M_{n \wedge \tau_x}^{\mgfparam}\}_{n\in \N}$ are uniformly integrable.
\end{lemma}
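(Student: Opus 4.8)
The plan is to prove uniform integrability by exhibiting, for each fixed $\lambda<\lambda_0$ and $x\ge 0$, a single \emph{integrable} random variable that dominates every member of the family $\{M_{n\wedge\tau_x}^{\lambda}\}_{n\in\N}$; since almost-sure domination by an integrable random variable implies uniform integrability, this is enough. Two elementary observations will be used throughout. First, because the increments $X_j$ are nonnegative, $n\mapsto S_n$ is nondecreasing; by the strong law of large numbers $\E[X_1]>0$ forces $S_n\to\infty$, so $\tau_x<\infty$ almost surely, and by definition $S_{\tau_x}\ge x$. Second, $\Lambda(\theta)=\log\E[e^{\theta X_1}]$ is finite and strictly increasing on $(-\infty,\lambda_0)$ (strictness because $\P[X_1>0]>0$) with $\Lambda(0)=0$, so that $\Lambda(\lambda)\ge 0$ when $\lambda\ge 0$ and $\Lambda(\lambda)<0$ when $\lambda<0$. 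I would split into these two sign cases.

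For $0\le\lambda<\lambda_0$: since $\Lambda(\lambda)\ge 0$ we have $e^{-\Lambda(\lambda)(n\wedge\tau_x)}\le 1$, and since $S$ is nondecreasing $S_{n\wedge\tau_x}\le S_{\tau_x}$; hence for every $n$,
\[
M_{n\wedge\tau_x}^{\lambda}\ \le\ e^{\lambda S_{\tau_x}}\ =\ e^{\lambda x}\,e^{\lambda(S_{\tau_x}-x)}\,.
\]
By Lemma~\ref{lem::firsthitting} the overshoot $S_{\tau_x}-x$ has an exponential tail of rate $\lambda_0$, so $\E[e^{\lambda(S_{\tau_x}-x)}]<\infty$ for $\lambda<\lambda_0$, and the right-hand side is the desired integrable dominating variable.

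For $\lambda<0$: now $\lambda S_{n\wedge\tau_x}\le 0$, while $-\Lambda(\lambda)(n\wedge\tau_x)=|\Lambda(\lambda)|\,(n\wedge\tau_x)\le |\Lambda(\lambda)|\,\tau_x$, so for every $n$,
\[
M_{n\wedge\tau_x}^{\lambda}\ \le\ e^{|\Lambda(\lambda)|\,\tau_x}\,.
\]
It then remains to check $\E\!\left[e^{|\Lambda(\lambda)|\tau_x}\right]<\infty$. For this I would use the Chernoff bound $\P[\tau_x>n]=\P[S_n<x]\le e^{-\theta x}\,e^{n\Lambda(\theta)}$, valid for any $\theta<0$ (where $\E[e^{\theta X_1}]\le 1<\infty$), choosing $\theta<\lambda$ so that $\Lambda(\theta)<\Lambda(\lambda)$ by strict monotonicity. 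Then $|\Lambda(\lambda)|+\Lambda(\theta)=\Lambda(\theta)-\Lambda(\lambda)<0$, and summing the geometric bound $e^{|\Lambda(\lambda)|n}\,\P[\tau_x=n]\le e^{|\Lambda(\lambda)|n}\,\P[\tau_x>n-1]\le (\text{const})\,e^{n(\Lambda(\theta)-\Lambda(\lambda))}$ over $n$ gives the finiteness.

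The only genuine obstacle is the case $\lambda<0$: there the factor $e^{-\Lambda(\lambda)n}$ grows exponentially in $n$, and $M_{n\wedge\tau_x}^{\lambda}$ can be large on $\{\tau_x>n\}$ even though that event has small probability. The resolution is that $\tau_x$ has an exponential tail whose rate can be taken to exceed $|\Lambda(\lambda)|$ — exactly the content of comparing $\Lambda(\theta)$ with $\Lambda(\lambda)$ for $\theta<\lambda$ — so the probability decay beats the growth. Everything else is routine bookkeeping, and I do not expect to need the (true) fact that $M^{\lambda}$ is a martingale, only the crude pointwise bounds above.
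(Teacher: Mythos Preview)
Your proof is correct. It differs from the paper's in two ways. First, the paper shows $\sup_n \E[(M^\lambda_{n\wedge\tau_x})^\beta]<\infty$ for some $\beta>1$ with $\beta\lambda<\lambda_0$, whereas you exhibit a single integrable random variable dominating the whole family almost surely; both are standard routes to uniform integrability. Second, the paper's chain of inequalities
\[
(M^\lambda_{n\wedge\tau_x})^\beta=\exp\!\big(\beta\lambda(S_{n\wedge\tau_x}-x)\big)\exp\!\big(\beta\lambda x-\beta\Lambda(\lambda)(n\wedge\tau_x)\big)\le \exp\!\big(\beta\lambda(S_{\tau_x}-x)\big)\exp(\beta\lambda x)
\]
uses $S_{n\wedge\tau_x}\le S_{\tau_x}$ and $\Lambda(\lambda)\ge0$, so as written it is valid only for $\lambda\ge 0$; you instead split on the sign of $\lambda$ and, for $\lambda<0$, bound $M^\lambda_{n\wedge\tau_x}\le e^{|\Lambda(\lambda)|\tau_x}$ and control $\E[e^{|\Lambda(\lambda)|\tau_x}]$ via a Chernoff bound with parameter $\theta<\lambda$, using the strict monotonicity of $\Lambda$ on $(-\infty,\lambda_0)$. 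This makes your argument cover the full range $\lambda<\lambda_0$ claimed in the lemma, at the cost of a case analysis. For the nonnegative case, both arguments ultimately rest on the overshoot bound of Lemma~\ref{lem::firsthitting}.
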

\begin{proof}
  Fix $\beta > 1$ such that $\beta \mgfparam < \lambda_0$.  By H\"older's
  inequality, any family of random variables which is uniformly bounded in
  $L^p$ for some $p>1$ is uniformly integrable.
    Therefore, it suffices to show that $\sup_{n \geq 0}\E[ (M_{n \wedge
    \tau_x}^\mgfparam)^\beta] < \infty$.  We have,
\begin{align*}
     (M_{n \wedge \tau_x}^\mgfparam)^\beta
&= \exp( \beta \lambda (S_{n \wedge \tau_x} - x)) \times \exp( \beta \mgfparam x - \beta \Lambda(\mgfparam) (n \wedge \tau_x ))\\
&\leq \exp( \beta \lambda (S_{\tau_x} - x))  \times \exp( \beta \mgfparam x).
\end{align*}
The result follows from Lemma~\ref{lem::firsthitting}.
\end{proof}

\begin{proof}[Proof of Theorem~\ref{thm::surround_two_point_bound}]
Fix $z,w \in D$ distinct and $j \in \N$.  Let $\varphi \colon D \to \D$ be the conformal map which sends $z$ to $0$ and $w$ to $e^{-x} \in (0,1)$.  Let $G_D$ (resp.\ $G_\D$) be the Green's function for $-\Delta$ with Dirichlet boundary conditions on~$D$ (resp.\ $\D$).  Explicitly,
\[ G_\D(u,v) = \frac{1}{2\pi} \log\frac{|1-\ol{u} v|}{|u-v|} \quad\text{for}\quad u,v \in \D.\]
In particular, $G_\D(0,u) = \frac{1}{2\pi}\log|u|^{-1}$ for $u \in \D$.  By the conformal invariance of $\CLE_\kappa$ and the Green's function, i.e.\ $G_D(u,v) = G_\D(\varphi(u),\varphi(v))$, it suffices to show that there exists a constant $C_{j,\kappa} \in (0,\infty)$ which depends only on $j$ and $\kappa \in (8/3,8)$ such that
\begin{align}
 \big| \E[ (\Loopcount_{0,e^{-x}})^j] -  (\lptyp x )^j \big|
\leq  C_{j,\kappa} (x+1)^{j-1} \quad\quad\text{for all $x>0$}. \label{eqn::sufficient_estimate}
\end{align}

Let $\{T_i\}_{i \in \N}$ be the sequence of $\log$ conformal radii
increments associated with the loops of $\Gamma$ which surround $0$, let
$S_k = \sum_{i=1}^k T_i$, and let $\tau_x = \min\{k \geq 1 : S_k \geq
x\}$. Recall that $\Lambda_\kappa(\lambda)$ denotes the $\log$ moment
generating function of the law of $T_1$. Let $M_n = \exp(\lambda S_n -
\Lambda_\kappa(\lambda) n)$.  By Lemma~\ref{lem::firsthitting_mg}, $\{M_{n
  \wedge \tau_x}\}_{n\in \N}$ is a uniformly integrable martingale for
$\lambda < 1-\tfrac{2}{\kappa} - \tfrac{3\kappa}{32}$.
By Lemma~\ref{lem::firsthitting}, we
can write $S_{\tau_x} = x + X$ where $\E[e^{\lambda X}] < \infty$.
By the optional stopping theorem for uniformly integrable martingales
  (see \cite[\S~A14.3]{williams1991probability}), we have that
\begin{equation}\label{eqn::exp_moment}
  1 = \E[ \exp(\lambda S_{\tau_x} - \Lambda_\kappa(\lambda) \tau_x)] = \E[ \exp(\lambda x + \lambda X - \Lambda_\kappa(\lambda) \tau_x)].
\end{equation}

We argue by induction on $j$ that
\begin{equation}\label{eqn::tau_moment}
\E[(\Lambda'_\kappa(0) \tau_x)^j] = x^j + O((x+1)^{j-1}).
\end{equation}
The base case $j=0$ is trivial.

If we differentiate \eqref{eqn::exp_moment} with respect to $\lambda$ and then evaluate at $\lambda=0$, we obtain
\[0 = \E[(x + X - \Lambda'_\kappa(0) \tau_x)].\]
If we instead differentiate twice, we obtain
\[0 = \E[(x + X - \Lambda'_\kappa(0) \tau_x)^2 -\Lambda''_\kappa(0)\tau_x].\]
Similarly, if we differentiate $j$ times with respect to $\lambda$ and then evaluate at $\lambda=0$, we obtain
\begin{equation}
 0 = \E[(x + X - \Lambda'_\kappa(0) \tau_x)^j] + \sum_{\substack{i\geq0,k\geq 1\\i+2k\leq j}} A_{\kappa,i,k} \E[(x + X - \Lambda'_\kappa(0) \tau_x)^i \tau_x^k],
\end{equation}
where the $A_{\kappa,i,k}$'s are constant coefficients depending on the higher order derivatives of $\Lambda_\kappa$ at $0$.
By our induction hypothesis, for $h<j$ we have $\E[\tau_x^{h}]=O((x+1)^{h})$.  Conditional on $\tau_x$, $X$ has exponentially small tails, so $\E[\tau_x^{h} X^{\ell}]=O((x+1)^{h})$ as well.  From this we obtain
\begin{equation}
 0 = \E[(x - \Lambda'_\kappa(0) \tau_x)^j] + O((x+1)^{j-1}).
\end{equation}
Using our induction hypothesis again for $h<j$, we obtain
\begin{equation}
 0 = \sum_{h=0}^{j-1} \binom{j}{h} (-1)^{h} x^j + \E[(-\Lambda'_\kappa(0) \tau_x)^j] + O((x+1)^{j-1}),
\end{equation}
from which \eqref{eqn::tau_moment} follows, completing the induction.

Recall that $J_{0,r}^{\cap}$ (resp.\ $J_{0,r}^{\subset}$) is the smallest
index $j$ such that $\Loop_0^j$ intersects (resp.\ is contained in)
$B(0,r)$.  It is straightforward that \[ \tau_{x-\log4} \leq
J_{0,e^{-x}}^{\cap} \leq \Loopcount_{0,e^{-x}}+1 \leq
J_{0,e^{-x}}^{\subset}.\] Since the $\tau$'s are stopping times for an
i.i.d.\ sum, conditional on the value of $\tau_{x-\log 4}$, the difference
$\tau_x-\tau_{x-\log 4}$ has exponentially decaying tails.  Moreover, by
Lemma~\ref{lem::loop_contain_prop}, conditional on the value of $\tau_x$,
$J_{0,e^{-x}}^{\subset}-\tau_x$ has exponentially decaying tails.  Thus
$\E[\Loopcount_{0,e^{-x}}^j] = \E[\tau_x^j] + O((x+1)^{j-1})$.  Finally, we
recall that $1/\Lambda_\kappa'(0) = 1/\E[T_1] = \lptyp$.
\end{proof}

By combining Theorem~\ref{thm::surround_two_point_bound} and
Corollary~\ref{cor::loop_contain_stoch_dom}, we can estimate the moments of
the number of loops which surround a ball in terms of powers of
$G_D(z,w)$.
\begin{corollary}
\label{cor::two_point_moments}
There exists a constant $C_{j,\kappa} \in (0,\infty)$ depending only on $\kappa \in (8/3,8)$ and $j\in\N$ such that the following is true.
For each $\eps > 0$ and $z\in D$ for which $\dist(z,\partial D) \geq 2\eps$ and $\theta \in \R$, we have
\begin{equation}
\label{eqn::contain_moment_green_bound}
 \big| \E[ (\Loopcount_z(\eps))^j] - (2\pi \lptyp G_D(z,z+\eps e^{i\theta}))^j\big|
 \leq C_{j,\kappa}(G_D(z,z+\eps e^{i\theta})+1)^{j-1}.
\end{equation}
In particular, there exists constant a constant $C_\kappa \in (0,\infty)$ depending only on $\kappa \in (8/3,8)$ such that
\begin{equation}
\label{eqn::contain_mean_cr_bound}
 \left|\E[ \Loopcount_z(\eps)] - \lptyp \log \frac{\confrad(z;D)}{\eps}\right| \leq C_\kappa\,.
\end{equation}
\end{corollary}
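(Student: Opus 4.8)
The plan is to deduce Corollary~\ref{cor::two_point_moments} from Theorem~\ref{thm::surround_two_point_bound} by comparing $\Loopcount_z(\eps)$, the number of loops of $\Gamma$ surrounding the ball $B(z,\eps)$, with $\Loopcount_{z,w}$, the number of loops surrounding the two points $z$ and $w\colonequals z+\eps e^{i\theta}$; note that $w\in\partial B(z,\eps)$ and that the hypothesis $\dist(z,\partial D)\geq 2\eps$ gives $\overline{B(z,\eps)}\subset D$. First I would record that, almost surely,
\[
\Loopcount_z(\eps)=J^\cap_{z,\eps}-1\;\leq\;\Loopcount_{z,w}\;\leq\;J^\subset_{z,\eps}-1 .
\]
The equality holds because a loop around $z$ surrounds $B(z,\eps)$ exactly when it is disjoint from $B(z,\eps)$, and by nesting these loops are precisely $\Loop_z^1,\dots,\Loop_z^{J^\cap_{z,\eps}-1}$. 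The first inequality holds because every loop surrounding $B(z,\eps)$ surrounds $w$ (the closure of the region it encloses contains $\overline{B(z,\eps)}$, hence $w$, and almost surely no loop of $\Gamma$ passes through the fixed point $w$), while the loops around $z$ that also surround $w$ form an initial segment of the nested sequence $(\Loop_z^k)_k$. The last inequality holds because $\Loop_z^{J^\subset_{z,\eps}}$, being contained in $B(z,\eps)$, does not surround $w$. Consequently $Y\colonequals\Loopcount_{z,w}-\Loopcount_z(\eps)$ satisfies $0\leq Y\leq J^\subset_{z,\eps}-J^\cap_{z,\eps}$, so by Corollary~\ref{cor::loop_contain_stoch_dom} it is stochastically dominated by $2\wt N$ with $\wt N$ a geometric random variable of parameter $p(\kappa)$; in particular $\E[Y^m]\leq c_m(\kappa)<\infty$ for every $m$.

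Next I would transfer the moment bound. Writing $G\colonequals G_D(z,w)$, Theorem~\ref{thm::surround_two_point_bound} gives $\E[\Loopcount_{z,w}^j]=(2\pi\lptyp G)^j+O((G+1)^{j-1})$, and since $\Loopcount_z(\eps)\leq\Loopcount_{z,w}$ pointwise we also get $\E[\Loopcount_z(\eps)^j]\leq\E[\Loopcount_{z,w}^j]=O((G+1)^j)$. Expanding $(\Loopcount_z(\eps)+Y)^j$ and bounding each cross term by H\"older's inequality (with exponents $\tfrac{j}{j-i}$ and $\tfrac{j}{i}$ on the $i$-th term, the $i=j$ term being $\E[Y^j]$),
\[
0\leq\E[\Loopcount_{z,w}^j]-\E[\Loopcount_z(\eps)^j]=\sum_{i=1}^{j}\binom{j}{i}\E\big[\Loopcount_z(\eps)^{j-i}Y^i\big]\leq\sum_{i=1}^{j}\binom{j}{i}\E[\Loopcount_z(\eps)^j]^{(j-i)/j}\,\E[Y^j]^{i/j}.
\]
Each summand is $O((G+1)^{j-i})=O((G+1)^{j-1})$ since $i\geq1$ and $G+1\geq1$, so $\big|\E[\Loopcount_z(\eps)^j]-\E[\Loopcount_{z,w}^j]\big|=O((G+1)^{j-1})$. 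Combining this with the bound for $\E[\Loopcount_{z,w}^j]$ from Theorem~\ref{thm::surround_two_point_bound} gives \eqref{eqn::contain_moment_green_bound}, with $C_{j,\kappa}$ depending only on $\kappa$ and $j$.

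For \eqref{eqn::contain_mean_cr_bound} I would take $j=1$ in \eqref{eqn::contain_moment_green_bound}, obtaining $\big|\E[\Loopcount_z(\eps)]-2\pi\lptyp G_D(z,w)\big|\leq C_{1,\kappa}$; it then suffices to show $\big|2\pi G_D(z,w)-\log(\confrad(z;D)/\eps)\big|\leq C$ for a universal constant $C$. Let $\varphi\colon D\to\D$ be conformal with $\varphi(z)=0$; by conformal invariance of the Green's function and the identity $G_\D(0,u)=-\tfrac1{2\pi}\log|u|$ we have $2\pi G_D(z,w)=-\log|\varphi(w)|$. Applying the \hyperref[thm::distortion]{Koebe distortion theorem} to the map $u\mapsto\varphi^{-1}(u)-z$, whose derivative at $0$ has modulus $\confrad(z;D)$, at the point $u=\varphi(w)$ (for which $|\varphi^{-1}(u)-z|=|w-z|=\eps$), yields
\[
\frac{|\varphi(w)|}{(1+|\varphi(w)|)^2}\,\confrad(z;D)\;\leq\;\eps\;\leq\;\frac{|\varphi(w)|}{(1-|\varphi(w)|)^2}\,\confrad(z;D).
\]
If $\confrad(z;D)\geq 8\eps$, the first inequality forces $|\varphi(w)|\leq\tfrac12$, and then the two inequalities together give $|\varphi(w)|\asymp\eps/\confrad(z;D)$ up to universal multiplicative constants, so $-\log|\varphi(w)|$ and $\log(\confrad(z;D)/\eps)$ differ by a universal constant. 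If instead $\confrad(z;D)<8\eps$, then Corollary~\ref{cor::onefourth} and the hypothesis give $2\eps\leq\inrad(z;D)\leq\confrad(z;D)<8\eps$, so $\log(\confrad(z;D)/\eps)=O(1)$, while the second displayed inequality forces $|\varphi(w)|$ above a universal constant, whence $-\log|\varphi(w)|=O(1)$ too. In both cases the desired bound follows, with $C_\kappa=C_{1,\kappa}+\lptyp C$.

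I expect essentially all of the substantive work to have been carried out already in Theorem~\ref{thm::surround_two_point_bound} and Corollary~\ref{cor::loop_contain_stoch_dom}, so that the only genuinely delicate point is the deterministic sandwich of the first step. Because $w$ lies exactly on the sphere $\partial B(z,\eps)$ rather than strictly inside it, one must argue with some care that a loop enclosing the open ball $B(z,\eps)$ still encloses the boundary point $w$ (using that the closure of its enclosed region contains $\overline{B(z,\eps)}$ and that, almost surely, no loop passes through the fixed point $w$), and that a loop contained in $B(z,\eps)$ does not enclose $w$.
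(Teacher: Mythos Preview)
Your proof is correct and follows essentially the same approach as the paper's: both deduce \eqref{eqn::contain_moment_green_bound} from Theorem~\ref{thm::surround_two_point_bound} by observing via Corollary~\ref{cor::loop_contain_stoch_dom} that $|\Loopcount_{z,w}-\Loopcount_z(\eps)|$ is stochastically dominated by (twice) a geometric variable, and then obtain \eqref{eqn::contain_mean_cr_bound} by setting $j=1$ and comparing $2\pi G_D(z,z+\eps e^{i\theta})$ with $\log(\confrad(z;D)/\eps)$. Your write-up is considerably more detailed than the paper's---you make explicit the sandwich $J^\cap_{z,\eps}-1\leq \Loopcount_{z,w}\leq J^\subset_{z,\eps}-1$, the H\"older step controlling the cross terms in the binomial expansion, and the Green's function estimate via the Koebe distortion theorem (where the paper instead invokes the harmonic decomposition $G_D(u,v)=\tfrac{1}{2\pi}\log|u-v|^{-1}-\psi_u(v)$ with $\psi_z(z)=-\tfrac{1}{2\pi}\log\confrad(z;D)$)---but the substance is the same.
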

\begin{proof}
Let $w=z+\eps e^{i\theta}$.
Corollary~\ref{cor::loop_contain_stoch_dom} implies that $|\Loopcount_{z,w} - \Loopcount_z(\eps)|$ is stochastically dominated by a geometric random variable whose parameter $p$ depends only on~$\kappa$.  Consequently, \eqref{eqn::contain_moment_green_bound} is a consequence of Theorem~\ref{thm::surround_two_point_bound}.  To see \eqref{eqn::contain_mean_cr_bound}, we apply \eqref{eqn::contain_moment_green_bound} for $j=1$ and use that $G_D(u,v) = \tfrac{1}{2\pi} \log |u-v|^{-1} - \psi_u(v)$ where $\psi_u(v)$ is the harmonic extension of $v \mapsto \tfrac{1}{2\pi} \log |u-v|^{-1}$ from $\partial D$ to $D$.  In particular, $\psi_z(z) = \tfrac{1}{2\pi} \log \confrad(z;D)$.
\end{proof}

\section{\texorpdfstring{Regularity of the $\eps$-ball nesting field}{Regularity of the \textepsilon-ball nesting field}}
\label{sec::field_regularity}

\makeatletter{}A key estimate that we use in the proof of Theorem~\ref{thm::existence_weighted} is the following
bound on how much the centered nesting field $h_\eps$ depends on $\eps$.  The proof of Theorem~\ref{thm::var-field-diff} and
the remaining sections may be read in either order.

\begin{theorem} \label{thm::var-field-diff}
  Let $D$ be a proper simply connected domain, and let $h_\eps(z)$ be
  the centered weighted nesting around the ball $B(z,\eps)$ of a
  $\CLE_\kappa$ on $D$, defined in \eqref{eq::h_eps}.  Suppose
  $0<\eps_1(z)\leq\eps$ and $0<\eps_2(z)\leq\eps$ on a compact subset
  $K\subset D$ of the domain.  Then there is some $c>0$ (depending on $\kappa$)
  and $C_0>0$ (depending on $\kappa$, $D$, $K$, and the loop weight distribution)
  for which
\begin{equation} \label{eq::var-field-diff}
\iint\limits_{K\times K} \big|\E\big[
   (h_{\eps_1(z)}(z){-}h_{\eps_2(z)}(z))\,(h_{\eps_1(w)}(w){-}h_{\eps_2(w)}(w))
  \big]\big| \,dz\,dw \leq C_0 \eps^c\,.
\end{equation}
\end{theorem}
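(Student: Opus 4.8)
The plan is to reduce \eqref{eq::var-field-diff} to estimates on numbers of loops, and then to exploit two geometric facts: a $\CLE_\kappa$ loop that comes within Euclidean distance $\eps$ of a point $z$ can surround a far-away point $w$ only with small probability, and, once this fails to happen, the loops deep in the nesting at $z$ and at $w$ are conditionally independent. \emph{Reduction to loop counts.} Write $\xi_\CL=m+\zeta_\CL$ with $m=\int x\,d\mu(x)$, $\sigma_\mu^2$ the variance of $\mu$, and $(\zeta_\CL)$ centered i.i.d.\ independent of $\Gamma$. Using $h_{\eps_1(z)}(z)-h_{\eps_2(z)}(z)=(h_{\eps_1(z)}(z){-}h_\eps(z))-(h_{\eps_2(z)}(z){-}h_\eps(z))$ and expanding the product in \eqref{eq::var-field-diff}, it suffices to bound $\iint_{K\times K}\big|\E[g_z g_w]\big|\,dz\,dw$, where for $\eta\colon K\to(0,\eps]$ the variable $g_z=h_{\eta(z)}(z)-h_\eps(z)$ is the centered weighted count of the finite set $B_z$ of loops of $\Gamma$ that surround $B(z,\eta(z))$ but not $B(z,\eps)$ (and likewise for $w$, with a possibly different radius function). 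Conditioning on $\Gamma$ and using that distinct loops carry independent centered $\zeta$'s gives the exact identity $\E[g_zg_w]=\sigma_\mu^2\,\E[|B_z\cap B_w|]+m^2\,\Cov(|B_z|,|B_w|)$, so it is enough to bound $\iint\E[|B_z\cap B_w|]$ and $\iint|\Cov(|B_z|,|B_w|)|$.

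\emph{The intersection term.} If $\CL\in B_z\cap B_w$ then $\CL$ surrounds both $z$ and $w$ and meets $B(z,\eps)$; since $\dist(z,\Loop_z^j)=\inrad(z;\operatorname{int}\Loop_z^j)$ is non-increasing in $j$, such $\CL$ has index in $(\Loopcount_z(\eps),\Loopcount_{z,w}]$ among the loops surrounding $z$, whence $|B_z\cap B_w|\le(\Loopcount_{z,w}-\Loopcount_z(\eps))^+\le\Loopcount_{z,w}$. Moreover $|B_z\cap B_w|>0$ forces the innermost loop surrounding both $z$ and $w$ to come within $\eps$ of $w$; tracking the negative-log-conformal-radius random walk around $w$ and applying Lemma~\ref{lem::firsthitting} to its overshoot shows this has probability $O\big((\eps/|z-w|)^{\lambda_0}\big)$ as soon as $|z-w|\ge C_1\eps$. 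Combined with $\E[\Loopcount_{z,w}^2]=O((G_D(z,w)+1)^2)$ from Theorem~\ref{thm::surround_two_point_bound} and Cauchy--Schwarz, $\E[|B_z\cap B_w|]\le C(1+\log\tfrac1{|z-w|})(\eps/|z-w|)^{\lambda_0/2}$ there; for $|z-w|<C_1\eps$ we use instead $\E[|B_z\cap B_w|]\le\E[(\Loopcount_{z,w}-\Loopcount_z(\eps))^+]=O(1+\log\tfrac1{|z-w|})$ (Corollaries~\ref{cor::two_point_moments} and~\ref{cor::loop_contain_stoch_dom}). Integrating in ``polar coordinates'' over $K\times K$ (the near-diagonal piece has area $O(\eps^2)$) gives a bound $C\eps^{c}$, $c>0$.

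\emph{The covariance term.} Let $\CL_z^\star:=\Loop_z^{\Loopcount_{z,w}+1}$ be the outermost loop surrounding $z$ but not $w$, with analogue $\CL_w^\star$; by non-crossing of $\CLE$ loops their interiors $W_z\ni z$, $W_w\ni w$ are disjoint, and tracking the two walks as above, $\confrad(W_z;z)$ and $\confrad(W_w;w)$ are comparable to $|z-w|$ up to multiplicative factors with exponential tails. By the conformal Markov property of $\CLE$, conditionally on the macroscopic configuration the CLEs $\Gamma^{W_z},\Gamma^{W_w}$ inside $W_z,W_w$ are independent, and on the event $\eps\ll\confrad(W_z;z)\wedge\confrad(W_w;w)$ (whose complement has probability $O((\eps/|z-w|)^{\lambda_0})$) we have $|B_z|=\Loopcount_z(\eta(z);\Gamma^{W_z})-\Loopcount_z(\eps;\Gamma^{W_z})$ and similarly for $w$. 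Taking conditional expectations, $\Cov(|B_z|,|B_w|)$ collapses to $\Cov(g_z(W_z),g_w(W_w))$ with $g_z(W):=\E[\Loopcount_z(\eta(z);\Gamma^W)-\Loopcount_z(\eps;\Gamma^W)]$. The crucial point is that by \eqref{eqn::contain_mean_cr_bound} the dominant $\lptyp\log\confrad(W;z)$ contributions to $\E[\Loopcount_z(\eta(z);\Gamma^W)]$ and $\E[\Loopcount_z(\eps;\Gamma^W)]$ \emph{cancel} in the difference, leaving $g_z(W)=\lptyp\log\tfrac{\eps}{\eta(z)}$ --- which is deterministic --- plus a correction depending only on the shape of $W$ at scales $\le\eps$; Lemma~\ref{lem::mean_loops_lcr} (with the renewal-type convergence of the disk mean) bounds this correction by $O((\eps/\confrad(W;z))^{\alpha})$. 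Hence $g_z(W_z)$ equals a deterministic quantity plus a $\CM$-measurable term of $L^2$-norm $O((\eps/|z-w|)^{\alpha'})$, so Cauchy--Schwarz gives $|\Cov(|B_z|,|B_w|)|=O((\eps/|z-w|)^{2\alpha'})$ for $|z-w|\ge C_1\eps$; for $|z-w|<C_1\eps$ one uses the crude bound $\sqrt{\Var|B_z|\,\Var|B_w|}$, which is controlled (and $O(1)$ when $\eps_i(z)\gtrsim\eps$, as in the applications) by the second-moment estimates of Section~\ref{sec::cle_estimates}. Integrating yields $C\eps^{c}$ again, and combining the two terms and the four pieces from the triangle inequality proves \eqref{eq::var-field-diff}.

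\emph{Main obstacle.} The delicate step is the covariance bound for distant $z,w$: naively the $\CLE$ Markov property does not decouple the pictures at $z$ and $w$, because $\CL_z^\star$, although of conformal radius $\asymp|z-w|$ from $z$, can be spatially large and reach near $w$. The resolution is that what actually enters $|B_z|$ is a count of loops at Euclidean scales between $\eta(z)$ and $\eps$, all $\ll|z-w|$, which lives strictly inside $\CL_z^\star$ and hence depends only on the fresh, independent $\CLE$ there --- the sole residual dependence on the macroscopically correlated shape of $W_z$ being the small-scale conformal-distortion correction governed by Lemma~\ref{lem::mean_loops_lcr}, which is exactly why the macroscopic $\log\confrad$ terms must cancel. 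The remaining work is bookkeeping: several exceptional events (failure of a low-distortion coupling as in Lemma~\ref{lem::mean_loops_inr}, atypically small $\confrad(W_z;z)$, $\Loopcount_{z,w}$ far above its mean) each carry a power of $\eps/|z-w|$, and one checks these integrate over $K\times K$ to a power of $\eps$.
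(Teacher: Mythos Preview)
Your overall architecture matches the paper's: the same decomposition of the integrand into a ``variance-type'' piece $\sigma_\mu^2\,\E[|B_z\cap B_w|]$ and a covariance-of-counts piece $m^2\,\Cov(|B_z|,|B_w|)$, the same conditioning on the separating loops $\Sigma_{z,w}$ to obtain conditional independence, and your ``Main obstacle'' paragraph correctly identifies why the decoupling is subtle. The intersection term is handled essentially as in Lemma~\ref{lem::mean_bound}. However, two steps are genuinely incomplete.

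\textbf{Far regime.} The crux is your claim that $g_z(W)=\nu_{\mathrm{typical}}\log(\eps/\eta(z))+O\big((\eps/\confrad(z;W))^\alpha\big)$. Equation~\eqref{eqn::contain_mean_cr_bound} only says the correction is $O(1)$; Lemma~\ref{lem::mean_loops_lcr} lets you replace $W$ by the unit disk at the cost $O((\eps/\confrad)^\alpha)$, reducing you to the statement that $\delta\mapsto\E[\Loopcount_0(\delta;\Gamma^\D)]-\nu_{\mathrm{typical}}\log(1/\delta)$ converges to a limit at rate $O(\delta^\alpha)$. This is exactly the ``renewal-type convergence of the disk mean'' you invoke --- but it is the nontrivial input, not a consequence of the cited lemmas. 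The paper proves the needed $L^2$ bound on $g_z(W_z)-\E[g_z(W_z)]$ directly (Lemma~\ref{lem::first_split2}) via a quantitative renewal coupling (Lemma~\ref{lem::coalesce}): two i.i.d.\ log-conformal-radius walks started from different positions are coupled to coalesce before a threshold except with exponentially small probability, and this is precisely what forces the $W$-dependence of $g_z(W)$ to wash out at polynomial rate. Without this coupling (or an equivalent extraction from Theorem~\ref{thm::full-plane}), your correction is bounded but not small, and the covariance bound collapses to $O(1)$ rather than $O((\eps/|z-w|)^c)$.

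\textbf{Near regime.} The crude bound $\sqrt{\Var|B_z|\,\Var|B_w|}$ is of order $\sqrt{\log(\eps/\eta(z))\log(\eps/\eta(w))}$, which is not uniform in the radius functions; your parenthetical concedes it is only $O(1)$ ``when $\eps_i(z)\gtrsim\eps$''. The theorem requires $C_0$ independent of $\eps_1,\eps_2$. The paper avoids this by splitting the count at the index $S_{z,w}$: the pre-separation piece is bounded by $1+\Loopcount_{z,w}$ restricted to a domain of conformal radius $\asymp\eps$, hence $O(1+\log(\eps/|z-w|))$ in $L^2$ \emph{uniformly} in $\eps_i$ (Theorem~\ref{thm::surround_two_point_bound}); the post-separation piece has bounded conditional-mean fluctuation by Lemma~\ref{lem::first_split}, where only the $O(1)$ bound from \eqref{eqn::contain_mean_cr_bound} is needed because the $\log\confrad$ terms cancel when you subtract at $\eps_1$ and $\eps_2$.
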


\begin{proof}
  Let $A$, $B$, and $C$ be the disjoint sets of loops for which
    $A\cup B$ is the set of loops surrounding $B(z,\eps_1(z))$ or
    $B(z,\eps_2(z))$ but not both, and $B\cup C$ is the set of loops
    surrounding $B(w,\eps_1(w))$ or $B(w,\eps_2(w))$ but not both.
  Letting $\xi_\Loop$ denote the weight of loop $\Loop$, then we have
\begin{align} \notag
\E[ (h_{\eps_1(z)}&(z) {-} h_{\eps_2(z)}(z))(h_{\eps_1(w)}(w) {-} h_{\eps_2(w)}(w))] \\ \notag
  &= \cov[h_{\eps_1(z)}(z) {-} h_{\eps_2(z)}(z),h_{\eps_1(w)}(w) - h_{\eps_2(w)}(w)] \\ \label{eq::h_diff}
  &= \pm\cov\left[\sum_{a\in A} \xi_a+\sum_{b\in B} \xi_b,
                  \sum_{b\in B} \xi_b+\sum_{c\in C} \xi_c\right] \\ \notag
  &= \pm \Var[\xi]\,\E[|B|] \, \pm \E[\xi]^2\Cov[|A|{+}|B|,|B|{+}|C|]\\ \notag
  &= \pm \Var[\xi]\,\E[|B|] \, + \E[\xi]^2
      \cov(\Loopcount_z(\eps_1){-}\Loopcount_z(\eps_2),
           \Loopcount_w(\eps_1){-}\Loopcount_w(\eps_2))\,, \notag
\end{align}
where the $\pm$ signs are the sign of $(\eps_1(z){-}\eps_2(z))(\eps_1(w){-}\eps_2(w))$.

Let $G_D^{\kappa,\eps}(z,w)$ denote the expected number of loops surrounding
$z$ and $w$ but surrounding neither $B(z,\eps)$ nor $B(w,\eps)$.
\label{not::GDke}
Then $\E[|B|]\leq G_D^{\kappa,\eps}(z,w)$.
In Lemma~\ref{lem::mean_bound} we prove
\begin{equation*}
  \iint_{K\times K} G_D^{\kappa,\eps}(z,w) \,dz\,dw \leq C_1 \eps^c\,,
\end{equation*}
and in Lemma~\ref{lem::cov_bound} we prove
\begin{equation*}
  \iint_{K\times K} \big|\cov(\Loopcount_z(\eps_1(z)) - \Loopcount_z(\eps_2(z)),
  \Loopcount_w(\eps_1(w)) - \Loopcount_w(\eps_2(w)))\big| \, dz\,dw \leq C_2 \eps^c\,,
\end{equation*}
where $c$ depends only on $\kappa$ and $C_1$ and $C_2$ depend only on $\kappa$, $D$, and $K$.
Equation~\eqref{eq::var-field-diff} follows from these bounds.
\end{proof}

In the remainder of this section we prove Lemmas~\ref{lem::mean_bound} and~\ref{lem::cov_bound}.

\begin{lemma} \label{lem::near_far} For any $\kappa\in(8/3,8)$ and $j\in
    \N$, there is a positive constant $c>0$ such that, whenever
    $D\subsetneq \C$ is a simply connected proper domain, $z\in D$, and
    $0<\eps<r$, the $j^{\text{th}}$ moment of the number of $\CLE_\kappa$
    loops surrounding $z$ which intersect $B(z,\eps)$ but are not contained
    in $B(z,r)$ is $O((\eps/r)^c)$.
\end{lemma}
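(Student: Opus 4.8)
The plan is to reduce to the whole-plane picture and exploit the scale invariance of whole-plane $\CLE_\kappa$ together with the exponential overshoot estimates already in hand. First I would use Corollary~\ref{cor::onefourth} to pass freely between inradius and conformal radius, and then — as in the proof of Lemma~\ref{lem::mean_loops_inr} — couple the $\CLE_\kappa$ in $D$ with a whole-plane $\CLE_\kappa$ via Theorem~\ref{thm::full-plane}, so that the loops surrounding $z$ agree (up to a low-distortion map) with whole-plane loops inside $B(z,\eps)^+$ with probability $1-O(\eps^\alpha)$; the $O(\eps^\alpha)$ exceptional event contributes a negligible amount to any fixed moment because, conditionally on it, the relevant loop sequence is still a renewal process with exponential tails (same argument as at the end of Lemma~\ref{lem::mean_loops_inr}). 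After translating and scaling we may take $z=0$, and after a further scaling it is enough to control the number $M$ of whole-plane $\CLE_\kappa$ loops surrounding $0$ that intersect $B(0,\eps/r)$ but are not contained in $B(0,1)$.

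The key step is the following: a loop intersecting $B(0,\eps/r)$ but not contained in $B(0,1)$ must "cross" the whole annulus $B(0,1)\setminus B(0,\eps/r)$, and for a loop surrounding $0$ this means its outer boundary reaches distance $\geq 1$ from $0$ while its inner part dips below $\eps/r$. Let $\{T_i\}$ be the i.i.d.\ increments of $-\log\confrad$ for the successive loops surrounding $0$ (as in Section~\ref{sec::tail}), with partial sums $S_k$, and let $V_k = -\log\inrad \Loop_0^k$, which by Corollary~\ref{cor::onefourth} differs from $S_k$ by at most $\log 4$. A loop counted by $M$ has $V_k \leq \log(r/\eps)$ for some index $k$ but, because it is not contained in $B(0,1)$, its diameter is $\gtrsim 1$, which forces either that same loop or one of a bounded number of nearby loops in the nested sequence to have inradius $\gtrsim 1$; concretely, $M$ is stochastically dominated by the number of indices $k$ with $V_k \leq \log(r/\eps)$ lying within a $\mathrm{Geom}(p)$-bounded gap (via Corollary~\ref{cor::loop_contain_stoch_dom}) of an index $k'$ with $V_{k'} \leq C$ for a constant $C=C(\kappa)$. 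So up to the geometric slack, $M$ is controlled by $\#\{k : S_k \leq C'\} = \tau_{C'}$-type quantities near the top of the annulus — equivalently, by the overshoot of the renewal walk: $M \leq $ (number of renewals of the $S$-walk in a window of bounded length near level $\log(r/\eps)$, counted from the far side). By Lemma~\ref{lem::firsthitting} applied to the walk $S$, the overshoot $S_{\tau_{\log(r/\eps)}} - \log(r/\eps)$ has exponential tails uniformly in $\eps/r$, and hence the number of $S$-renewals in any fixed-length window has all moments bounded uniformly; but we want decay in $\eps/r$, not just boundedness.

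The decay comes from the geometric factor: each of the (up to $O(1)$ in expectation, exponentially tailed) loops that intersect $B(0,\eps/r)$ before the sequence's conformal radius has shrunk below $1$ has, conditionally on the previous loop, a uniformly positive probability $q=q(\kappa)$ of already being contained in $B(0,\eps/r)$ rather than reaching out past $B(0,1)$ — more precisely, by the Koebe distortion theorem and the renewal structure, the probability that a loop surrounding $0$ which first enters $B(0,\eps/r)$ also fails to be contained in $B(0,1)$ decays like $(\eps/r)^{c}$ for some $c=c(\kappa)>0$, since it must survive $\asymp \log(r/\eps)$ renewal steps without its inradius exceeding $1$, an event whose probability is exponentially small in $\log(r/\eps)$ by the large-deviation bound underlying Lemma~\ref{lem::firsthitting} (equivalently, by a Markov-type bound $\P[\tau_{\log(r/\eps)} \geq k] \leq e^{-\lambda\log(r/\eps)}\E[e^{\lambda(\cdots)}]$). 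Combining: $\E[M^j] \leq C (\eps/r)^{c}$, where I absorb the geometric-slack moments (finite by Corollary~\ref{cor::loop_contain_stoch_dom}) and the overshoot moments (finite by Lemma~\ref{lem::firsthitting}) into the constant, and take $c$ slightly smaller than the exponential rate to kill the polynomial $\log(r/\eps)^{j}$ corrections.

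I expect the main obstacle to be the geometric/combinatorial bookkeeping in the middle step: carefully arguing that "intersects $B(0,\eps/r)$ but not contained in $B(0,1)$" translates into a loop in the nested sequence whose conformal radius stays $\gtrsim 1$ for the $\asymp\log(r/\eps)$ generations it takes to shrink down to scale $\eps/r$, and handling the bounded-but-random number of such loops together with the $J^\subset - J^\cap$ slack from Corollary~\ref{cor::loop_contain_stoch_dom} without double-counting. Once that reduction is clean, the probabilistic input is entirely the exponential overshoot bound of Lemma~\ref{lem::firsthitting} and the elementary Koebe estimates, exactly as in Lemmas~\ref{lem::mean_loops_inr} and~\ref{lem::mean_loops_lcr}.
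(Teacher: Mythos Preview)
Your probabilistic instincts are right --- the whole argument ultimately rests on Corollary~\ref{cor::loop_contain_stoch_dom} and the exponential overshoot bound of Lemma~\ref{lem::firsthitting} --- but the full-plane coupling is a detour that both complicates the proof and, as written, does not control the right loops. Coupling $\Gamma_D$ to $\Gamma_\C$ via Theorem~\ref{thm::full-plane} at scale $\eps$ matches the two ensembles on $\Gamma|_{B(z,\eps)^+}$, i.e.\ on the loops \emph{inside} the innermost loop surrounding $B(z,\eps)$. The loops you are trying to count are exactly those that intersect $B(z,\eps)$ but are \emph{not} contained in $B(z,r)$; these live outside $\Gamma|_{B(z,\eps)^+}$, so the coupling says nothing about them. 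Coupling at scale $r$ instead produces an error $(r/\dist(z,\partial D))^\alpha$, which is not $(\eps/r)^c$ and can be of order one. Either way the reduction to whole-plane does not go through cleanly.

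The paper avoids all of this by observing that the quantity in question is already expressible in terms of the stopping times $J^\cap$ and $J^\subset$, with no need to change domain. A loop $\Loop_z^k$ intersects $B(z,\eps)$ and fails to lie in $B(z,r)$ precisely when $J^\cap_{z,\eps}\leq k < J^\subset_{z,r}$, so the event that any such loop exists is exactly $\{J^\cap_{z,\eps} < J^\subset_{z,r}\}$. Now $J^\subset_{z,r}-J^\cap_{z,r}$ is dominated by twice a geometric by Corollary~\ref{cor::loop_contain_stoch_dom}, while $J^\cap_{z,\eps}-J^\cap_{z,r}$ is of order $\log(r/\eps)$ except with probability $O((\eps/r)^{c_1})$ (overshoot bound plus Koebe); combining these gives $\P[J^\cap_{z,\eps}<J^\subset_{z,r}]=O((\eps/r)^{c_2})$. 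On that event the number of such loops is at most $J^\subset_{z,r}-J^\cap_{z,r}$, which has all moments bounded by Corollary~\ref{cor::loop_contain_stoch_dom}, and the $j$th-moment bound follows. This is exactly the ``geometric slack plus renewal-walk decay'' mechanism you identified in your last paragraph, just run directly in $D$ without passing through full-plane CLE.
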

\begin{proof}
  If there is a loop $\Loop=\Loop_z^k$ surrounding $z$ which is not
  contained in $B(z,r)$ and comes within distance $\eps$ of $z$, then
  $J_{z,\eps}^\cap\leq k$ and $J_{z,r}^\subset> k$, so $J_{z,\eps}^\cap <
  J_{z,r}^\subset$.  But from Corollary~\ref{cor::loop_contain_stoch_dom}
  $J_{z,r}^\subset-J_{z,r}^\cap$ is dominated by twice a geometric random
  variable, and by Lemma~\ref{I-lem::firsthitting} in \cite{extremes}
  together with the Koebe quarter theorem we have
  $J_{z,\eps}^\cap-J_{z,r}^\cap$ is order $\log(r/\eps)$ except with
  probability $O((\eps/r)^{c_1})$, for some constant $c_1>0$ (depending on
  $\kappa$).  Therefore, except with probability $O((\eps/r)^{c_2})$ (with
  $c_2=c_2(\kappa)>0$), we have $J_{z,\eps}^\cap \geq J_{z,r}^\subset$. In
  this case there is no loop $\Loop$ surrounding $z$, not contained in
  $B(z,r)$, and coming within distance $\eps$ of $z$.  Finally, note that
  conditioned on the event that there is such a loop $\Loop$, the
  conditional expected number of such loops is by
  Corollary~\ref{cor::loop_contain_stoch_dom} dominated by twice a
  geometric random variable.
\end{proof}

\begin{lemma} \label{lem::mean_bound} For some positive constant $c<2$,
  \begin{equation}
  \label{eq::mean_bound}
  \iint_{K \times K} G_D^{\kappa,\eps}(z,w) \,dz \,dw=O(\area(K)^{2-c/2} \eps^c)\,.
\end{equation}
\end{lemma}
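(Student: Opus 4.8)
The plan is to bound $G_D^{\kappa,\eps}(z,w)$, the expected number of loops surrounding both $z$ and $w$ but neither $B(z,\eps)$ nor $B(w,\eps)$, by splitting according to whether such a loop stays ``close'' to the pair $\{z,w\}$ or wanders ``far''. Write $d = |z-w|$. First I would handle the regime $\eps \geq d$: here any loop surrounding both points and within distance $\eps$ of each must intersect $B(z,O(\eps))$ but fail to be contained in $B(z,\eps)$ — wait, more precisely such a loop comes within distance $\eps$ of $z$ yet is not contained in $B(z,\eps)$ only if it surrounds $w$ as well — so in fact when $\eps \gtrsim d$ there are essentially no such loops, and the contribution is negligible; I would dispose of this case quickly using that $G_D^{\kappa,\eps}(z,w) = 0$ unless the geometry forces a loop that intersects a small ball around $z$ without being contained in a comparably small ball, and invoke Lemma~\ref{lem::near_far} with $r \asymp d$.

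The main case is $\eps < d$. Here the key observation is that a loop $\Loop$ counted by $G_D^{\kappa,\eps}(z,w)$ surrounds both $z$ and $w$, so it surrounds $B(z, d)$ — no: it need not contain that ball, but it does separate $z$ (and $w$) from $\partial D$ while coming within distance $\eps$ of $z$. So $\Loop$ is a loop surrounding $z$ that intersects $B(z,\eps)$ but is not contained in $B(z, \tfrac{d}{2})$ (since it also surrounds $w$, which is at distance $d$ from $z$, so it must extend past $B(z, d/2)$ or so). Thus $G_D^{\kappa,\eps}(z,w)$ is at most the expected number of $\CLE_\kappa$ loops surrounding $z$, intersecting $B(z,\eps)$, and not contained in $B(z, d/2)$, which by Lemma~\ref{lem::near_far} (with $j=1$) is $O((\eps/d)^c)$ for some $c = c(\kappa) > 0$. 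We may take $c < 2$ by shrinking it. So $G_D^{\kappa,\eps}(z,w) \leq C (\eps/|z-w|)^c$ pointwise.

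It then remains to integrate: I need
\[
\iint_{K\times K} \left(\frac{\eps}{|z-w|}\right)^c \, dz\, dw = \eps^c \iint_{K\times K} |z-w|^{-c}\, dz\, dw.
\]
Since $c < 2$, the kernel $|z-w|^{-c}$ is locally integrable in the plane, and a standard rearrangement / layer-cake estimate bounds $\iint_{K\times K} |z-w|^{-c}\,dz\,dw$ by $C \area(K) \cdot R^{2-c}$ where $R$ is chosen so that a ball of radius $R$ has area $\area(K)$, i.e.\ $R \asymp \area(K)^{1/2}$; this gives the bound $C\,\area(K)^{1 + (2-c)/2} = C\,\area(K)^{2 - c/2}$. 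Combining with the pointwise bound yields $\iint_{K\times K} G_D^{\kappa,\eps}(z,w)\,dz\,dw = O(\area(K)^{2-c/2}\eps^c)$, as claimed.

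The step I expect to be the main obstacle is the geometric reduction in the second paragraph: making rigorous that ``surrounds both $z$ and $w$, comes within $\eps$ of $z$, but doesn't surround $B(z,\eps)$'' forces the loop to intersect $B(z,\eps)$ while escaping a ball of radius comparable to $|z-w|$ around $z$, so that Lemma~\ref{lem::near_far} applies with $r \asymp |z-w|$. One has to be a little careful about which ball to center where (one could also symmetrize and center at $w$), and about the constant in ``$r \asymp |z-w|$'' versus the requirement $\eps < r$ in Lemma~\ref{lem::near_far}, but nothing deeper than bookkeeping should be needed. The final integration is routine once the pointwise bound with exponent $c < 2$ is in hand.
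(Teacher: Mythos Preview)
Your handling of the far regime $|z-w|>\eps$ is correct and matches the paper: a loop counted by $G_D^{\kappa,\eps}(z,w)$ surrounds $z$, intersects $B(z,\eps)$, and cannot be contained in $B(z,|z-w|)$ since it also surrounds $w$; Lemma~\ref{lem::near_far} then gives $G_D^{\kappa,\eps}(z,w)=O((\eps/|z-w|)^c)$, and the integration to $O(\area(K)^{2-c/2}\eps^c)$ goes through.

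The gap is in the near-diagonal regime $|z-w|\leq\eps$. You conflate ``does not surround $B(z,\eps)$'' with ``is not contained in $B(z,\eps)$'', and this leads you to the false conclusion that there are essentially no such loops. In fact a loop can sit entirely inside $B(z,\eps)$, surround both $z$ and $w$ (which are within $\eps$ of each other), and trivially fail to surround $B(z,\eps)$. There are typically many nested loops doing this: by Theorem~\ref{thm::surround_two_point_bound} the expected number of loops surrounding both $z$ and $w$ inside a domain of conformal radius $\asymp\eps$ is $\Theta(\log(\eps/|z-w|))$, which blows up as $|z-w|\to 0$. Your proposed invocation of Lemma~\ref{lem::near_far} with $r\asymp|z-w|$ cannot repair this: the lemma requires $\eps<r$, and even formally its output $(\eps/r)^c$ is large, not small, when $r<\eps$.

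The paper's fix is short: for $|z-w|\leq\eps$, condition on the outermost loop $\Loop$ that surrounds both points but not $B(z,\eps)$, and bound the remaining count by $\Loopcount_{z,w}$ in the $\CLE_\kappa$ inside $\Loop$; Theorem~\ref{thm::surround_two_point_bound} gives $\E[\Loopcount_{z,w}]\leq C_1\log(\eps/|z-w|)+C_2$. Integrating this logarithm over $\{(z,w)\in K\times K:|z-w|\leq\eps\}$ yields $O(\area(K)\,\eps^2)$, which is absorbed into the main bound since $c<2$.
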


\begin{proof}
  Let $F^\eps_{z,w}$ denote the number of
  loops surrounding both $z$ and $w$ but not $B(z,\eps)$ or $B(w,\eps)$.
  Then $G_D^{\kappa,\eps}(z,w)=\E[F^\eps_{z,w}]$.

  Suppose $|z-w|\leq\eps$.  Let $\Loop$ be the outermost loop (if any)
  surrounding both $z$ and $w$ but not $B(z,\eps)$ or $B(w,\eps)$.  The
  number of additional such loops is $\Loopcount_{z,w}(\Gamma')$, where
  $\Gamma'$ is a $\CLE_\kappa$ in $\interior\Loop$, and by
  Theorem~\ref{thm::surround_two_point_bound} we have
  $\E[\Loopcount_{z,w}(\Gamma')]\leq C_1 \log(\eps/|z-w|) + C_2$ for some
  constants $C_1$ and $C_2$.  Integrating the logarithm, we find that
  \begin{equation} \label{eq::diagonal}
  \iint_{\substack{K\times K\\|z-w|\leq\eps}} G_D^{\kappa,\eps}(z,w) \,dz\,dw = O(\area(K) \eps^2)\,.
  \end{equation}

  Next suppose $|z-w|>\eps$.  Now $F^\eps_{z,w}$ is dominated by the number of loops surrounding $z$ which intersect $B(z,\eps)$ but are not
  contained in $B(z,|z-w|)$, and Lemma~\ref{lem::near_far} bounds the expected number of these loops by $O((\eps/|z-w|)^c)$ for some $c>0$.
  We decrease $c$ if necessary to ensure $0<c<2$, and let $R=\area(K)^{1/2}$.
  Since $(\eps/|z-w|)^c$ is decreasing in $|z-w|$, we can bound
  \begin{align}
  \iint_{\substack{K\times K\\|z-w|>\eps}} G_D^{\kappa,\eps}(z,w) \,dz\,dw
   &\leq
  \iint_{\substack{R\D\times R\D\\|z-w|>\eps}} O((\eps/|z-w|)^c) \,dz\,dw \notag\\
  &= O(\area(K)^{2-c/2} \eps^{c})\,.\label{eq::off-diagonal}
  \end{align}
  Combining \eqref{eq::diagonal} and \eqref{eq::off-diagonal}, using again $c<2$, we obtain \eqref{eq::mean_bound}.
\end{proof}

We let $S_{z,w}$ be the index of the outermost loop surrounding $z$ which
separates $z$ from $w$ in the sense that $w\notin U_z^{S_{z,w}}$.  Note
that $S_{z,w}$ is also the smallest index for which $z\notin
U_w^{S_{z,w}}$:
\begin{equation}
 S_{z,w} \colonequals \min\{k:w\notin U_z^k\} = \min\{k:z\notin U_w^k\} \,.
\end{equation}
We let $\Sigma_{z,w}$ denote the $\sigma$-algebra
\begin{equation} \label{eq::sigma}
  \Sigma_{z,w} \colonequals \sigma(\{\Loop_z^{k}\,:1\leq k \leq S_{z,w}\}\cup \{\Loop_w^{k}\,:1\leq k \leq S_{z,w}\})\,.
\end{equation}

\begin{lemma}
  There is a constant $C$ (depending only on $\kappa$) such that
  if $z,w\in D$ are distinct, then
\[
  -C\leq \E\!\left[\log\frac{\confrad(z;U_z^{S_{z,w}})}{\min(|z-w|,\confrad(z;D))}\right] \leq C\,.
\]
\end{lemma}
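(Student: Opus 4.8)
The plan is to bound the quantity $\confrad(z;U_z^{S_{z,w}})$ both above and below by comparing it with $\min(|z-w|,\confrad(z;D))$, using the geometry of the region $U_z^{S_{z,w}}$. Recall that $U_z^{S_{z,w}}$ is the connected component containing $z$ of the complement of the $S_{z,w}$th loop surrounding $z$, and by definition of $S_{z,w}$ this loop separates $z$ from $w$ while the previous loop $\Loop_z^{S_{z,w}-1}$ does not (so $w\in U_z^{S_{z,w}-1}$). The upper bound is the easier direction: since $\Loop_z^{S_{z,w}}$ separates $z$ from $w$, the point $w$ lies outside $U_z^{S_{z,w}}$, so $\inrad(z;U_z^{S_{z,w}})\leq |z-w|$; also trivially $U_z^{S_{z,w}}\subseteq D$ gives $\inrad(z;U_z^{S_{z,w}})\leq\inrad(z;D)$. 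Combining with Corollary~\ref{cor::onefourth} ($\confrad\leq 4\inrad$) yields
\[
\confrad(z;U_z^{S_{z,w}})\leq 4\min(\inrad(z;D),|z-w|)\leq 4\min(\confrad(z;D),|z-w|),
\]
so the upper bound holds deterministically with $C=\log 4$, no expectation needed.

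For the lower bound I would argue in two cases according to whether $\confrad(z;D)\leq|z-w|$ or not, but in both cases the key point is that the gap $S_{z,w}$ is, up to a geometric random variable, the first index at which a loop surrounding $z$ has shrunk past the scale $\min(|z-w|,\confrad(z;D))$. More precisely: let $r=\min(|z-w|,\confrad(z;D))$. The loop $\Loop_z^{S_{z,w}-1}$ surrounds $w$ (since $w\in U_z^{S_{z,w}-1}$), hence $\diam(\Loop_z^{S_{z,w}-1})\gtrsim|z-w|\geq r$ provided $|z-w|\leq\confrad(z;D)$; when instead $|z-w|>\confrad(z;D)$, even the outermost loop $\Loop_z^1$ has conformal radius comparable to $\confrad(z;D)=r$. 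In either situation, $S_{z,w}$ differs from $J^\subset_{z,cr}$ (for a suitable constant $c$) by a quantity with exponential tails: one direction is Corollary~\ref{cor::loop_contain_stoch_dom}, and the other uses that once a loop is contained in $B(z,cr)$ with $c$ small, it separates $z$ from $w$ (if $|z-w|\gtrsim r$) and hence the index $S_{z,w}$ has been reached. Then the log conformal radii of the loops surrounding $z$ form an i.i.d.-increment random walk (by the Markov/renewal property of nested CLE, via conformal invariance), so $-\log\confrad(z;U_z^{S_{z,w}})$ is the value of this walk at the first-passage time over level $-\log r + O(1)$; Lemma~\ref{lem::firsthitting} controls the overshoot, giving $\E[-\log(\confrad(z;U_z^{S_{z,w}})/r)]\leq C$. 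Finally, the geometric gap between $S_{z,w}$ and $J^\subset_{z,cr}$ contributes only $O(1)$ in expectation to $-\log\confrad$ since each additional loop decreases the log conformal radius by an i.i.d.\ increment of finite mean (by Corollary~\ref{cor::loop_contain_stoch_dom} and the renewal structure, using that $T_1$ has exponential moments). Combining, $\E[-\log(\confrad(z;U_z^{S_{z,w}})/r)]$ is bounded by a constant depending only on $\kappa$, which is the lower bound.

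The main obstacle is making the geometric comparison between the separation index $S_{z,w}$ and the containment index $J^\subset_{z,cr}$ fully rigorous, since $|z-w|$ and $\confrad(z;D)$ can be of very different orders and the relevant "scale at which a loop surrounding $z$ stops surrounding $w$" is only captured up to constants by either quantity — one has to handle the boundary-effect case ($|z-w|>\confrad(z;D)$, where $\partial D$ rather than the point $w$ is what forces separation) separately and check that in that regime the very first loop already has $\confrad$ comparable to $\confrad(z;D)$ via the Koebe estimates. Once that dictionary is set up, everything reduces to first-passage estimates for the i.i.d.-increment walk, all of which are furnished by Lemma~\ref{lem::firsthitting} and Corollary~\ref{cor::loop_contain_stoch_dom}.
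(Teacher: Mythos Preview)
Your upper bound matches the paper's exactly. For the lower bound, your approach is correct in substance but more roundabout than the paper's. You compare $S_{z,w}$ with $J^\subset_{z,cr}$ and then control the log conformal radius at $J^\subset_{z,cr}$ via the overshoot lemma plus the geometric gap $J^\subset-J^\cap$ from Corollary~\ref{cor::loop_contain_stoch_dom}. This works, though your sentence claiming that $-\log\confrad(z;U_z^{S_{z,w}})$ is the value of the walk at a first-passage time is not literally true (neither $S_{z,w}$ nor $J^\subset_{z,cr}$ is a first-passage time of the log-conformal-radius walk); the geometric-gap correction you add afterwards is what actually repairs this.

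The paper instead invokes a single annulus-loop lemma from the companion article: with $r=\min(|z-w|,\dist(z,\partial D))$, except with probability exponentially small in $k$ there is a loop contained in $B(z,r)$ that surrounds $B(z,r/2^k)$. Any such loop separates $z$ from $w$ (it lies inside $B(z,r)\subset D$ and $r\le|z-w|$), so $S_{z,w}$ has occurred by that index and hence $\confrad(z;U_z^{S_{z,w}})\ge r/2^k$ on that event. This gives exponential tails for $\big(\log r-\log\confrad(z;U_z^{S_{z,w}})\big)^+$ directly, bypassing the overshoot and first-passage bookkeeping; your case split on which of $|z-w|$ and $\confrad(z;D)$ is smaller also becomes unnecessary once one works with $r$ throughout.
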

\begin{proof}
  Let $r=\min(|z-w|,\dist(z,\partial D))$.
  By the Koebe distortion theorem, $\confrad(z;U_z^{S_{z,w}}) \leq 4 r$, which gives the upper bound.
  By \cite[Lemma~\ref{I-lem::annulus-loop}]{extremes}, there is a loop contained in $B(z,r)$ but which
  surrounds $B(z,r/2^k)$ except with probability exponentially small $k$, which gives the lower bound.
\end{proof}

\begin{lemma}
\label{lem::first_split} There exists a constant $C>0$ (depending only on $\kappa$) such that
if $z,w \in D$ are distinct, and
$0<\eps<\min(|z-w|,\confrad(z;D))$,
then on the event $\{ \confrad(z;U_z^{S_{z,w}}) \geq 8\eps\}$,
\begin{multline}
\label{eq::log_bound}
 \bigg| \E\big[J^\cap_{z,\eps} - S_{z,w} \,|\, U_{z}^{S_{z,w}} \big] -  \E\big[J^\cap_{z,\eps} - S_{z,w} \big] -\\ \lptyp \log\frac{\confrad(z;U_z^{S_{z,w}})}{\min(|z-w|,\confrad(z;D))} \bigg|
\leq  C\,.
\end{multline}
\end{lemma}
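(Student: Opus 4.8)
The plan is to write $J^\cap_{z,\eps}-S_{z,w}$ as a difference of loop counts and apply the mean estimate \eqref{eqn::contain_mean_cr_bound} twice: once in the ambient domain $D$ to handle the unconditional expectation, and once --- via the CLE renewal property --- in the random domain $U\colonequals U_z^{S_{z,w}}$ to handle the conditional one. We may assume throughout that $\confrad(z;D)\geq 8\eps$, since $U\subseteq D$ gives $\confrad(z;U)\leq\confrad(z;D)$, so otherwise the event $\{\confrad(z;U)\geq 8\eps\}$ is empty and there is nothing to prove. The algebraic backbone is a pair of essentially deterministic identities: $\Loopcount_z(\eps)=J^\cap_{z,\eps}-1$ (a loop surrounds $B(z,\eps)$ iff it is $\Loop_z^k$ with $\Loop_z^k\cap B(z,\eps)=\varnothing$, i.e.\ iff $k<J^\cap_{z,\eps}$), and $\Loopcount_{z,w}$ and $S_{z,w}$ differ by $O(1)$ (a loop surrounds both $z$ and $w$ iff it is $\Loop_z^k$ with $w$ in a bounded complementary component of $\Loop_z^k$, which forces $k\leq S_{z,w}$, since for $k>S_{z,w}$ the loop $\Loop_z^k$ lies inside $U$, which excludes $w$). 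Hence $J^\cap_{z,\eps}-S_{z,w}=\Loopcount_z(\eps)-\Loopcount_{z,w}+O(1)$.

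For the unconditional term, \eqref{eqn::contain_mean_cr_bound} applied in $D$ (legitimate because $\confrad(z;D)\geq 8\eps$ forces $\dist(z,\partial D)=\inrad(z;D)\geq\confrad(z;D)/4\geq 2\eps$) gives $\E[\Loopcount_z(\eps)]=\lptyp\log(\confrad(z;D)/\eps)+O(1)$, while Theorem~\ref{thm::surround_two_point_bound} with $j=1$ gives $\E[\Loopcount_{z,w}]=2\pi\lptyp G_D(z,w)+O(1)$. A short Koebe-distortion computation --- writing $2\pi G_D(z,w)=\log|\varphi(w)|^{-1}$ for the conformal map $\varphi\colon D\to\D$ with $\varphi(z)=0$, and using the Koebe distortion theorem (Theorem~\ref{thm::distortion}) to bound $|\varphi(w)|$ above and below by constant multiples of $|z-w|/\confrad(z;D)$ --- shows $\bigl|2\pi G_D(z,w)-\log\tfrac{\confrad(z;D)}{\min(|z-w|,\confrad(z;D))}\bigr|\leq\log 4$. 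Subtracting, $\E[J^\cap_{z,\eps}-S_{z,w}]=\lptyp\log\tfrac{\min(|z-w|,\confrad(z;D))}{\eps}+O(1)$.

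For the conditional term, the CLE renewal property says that, conditionally on $\Loop_z^{S_{z,w}}$ --- equivalently, on $U$, since $S_{z,w}$ is a stopping time for the exploration of the nested loops around $z$ and the conditional law depends on $\Loop_z^{S_{z,w}}$ only through $U$ --- the loops of $\Gamma$ contained in $U$ form a $\CLE_\kappa$ $\Gamma'$ in $U$, whose loops surrounding $z$ are precisely $\Loop_z^{S_{z,w}+1},\Loop_z^{S_{z,w}+2},\dots$. On the event $\{\confrad(z;U)\geq 8\eps\}$ we have $\inrad(z;U)\geq 2\eps$, hence $B(z,\eps)\subset U$ and each of $\Loop_z^1,\dots,\Loop_z^{S_{z,w}}$ surrounds $B(z,\eps)$, so there $\Loopcount_z(\eps)=S_{z,w}+\Loopcount_z(\eps;\Gamma')$ and therefore $J^\cap_{z,\eps}-S_{z,w}=\Loopcount_z(\eps;\Gamma')+1$. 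Applying \eqref{eqn::contain_mean_cr_bound} to $\Gamma'$ in $U$ (again $\inrad(z;U)\geq 2\eps$) and taking the conditional expectation given $U$ yields $\E[J^\cap_{z,\eps}-S_{z,w}\mid U]=\lptyp\log(\confrad(z;U)/\eps)+O(1)$ on $\{\confrad(z;U)\geq 8\eps\}$. Subtracting the formula from the previous paragraph gives, on that event,
\[
\E[J^\cap_{z,\eps}-S_{z,w}\mid U]-\E[J^\cap_{z,\eps}-S_{z,w}]=\lptyp\log\frac{\confrad(z;U)}{\min(|z-w|,\confrad(z;D))}+O(1),
\]
which is exactly \eqref{eq::log_bound}.

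The two Koebe estimates and the loop-counting identities are routine. The point deserving the most care --- and the one I expect to be the main obstacle --- is the invocation of the renewal property: one must confirm that $S_{z,w}$ is genuinely a stopping time for the nested loops surrounding $z$, and that conditionally on $U$ (rather than on the full $\sigma$-algebra $\Sigma_{z,w}$, which also records loops around $w$) the collection $\Gamma'$ is a fresh $\CLE_\kappa$ in $U$ independent of the past. This is standard for CLE (see the preliminaries in \cite{extremes}) and is precisely what licenses computing the conditional expectation in \eqref{eq::log_bound} inside $\Gamma'$ and applying \eqref{eqn::contain_mean_cr_bound} there.
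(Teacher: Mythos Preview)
Your proof is correct, and the treatment of the conditional expectation (via the CLE renewal property inside $U_z^{S_{z,w}}$ and \eqref{eqn::contain_mean_cr_bound}) is exactly the paper's derivation of its equation~\eqref{eq::conditional}. The route to the unconditional estimate, however, is genuinely different and somewhat slicker. The paper obtains \eqref{eq::Jze-S-2} by splitting $\E[J_{z,\eps}^\cap-S_{z,w}]$ over the event $\{\confrad(z;U_z^{S_{z,w}})\geq 8\eps\}$ and its complement, which forces it to invoke the preceding lemma on $\E[\log\confrad(z;U_z^{S_{z,w}})]$, the annulus-loop estimate from \cite{extremes}, and Corollary~\ref{cor::loop_contain_stoch_dom}. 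You instead write $J_{z,\eps}^\cap-S_{z,w}=\Loopcount_z(\eps)-\Loopcount_{z,w}+O(1)$ and feed in \eqref{eqn::contain_mean_cr_bound} (in $D$) together with the $j=1$ case of Theorem~\ref{thm::surround_two_point_bound}, reducing everything to a Koebe-distortion comparison of $2\pi G_D(z,w)$ with $\log\bigl(\confrad(z;D)/\min(|z-w|,\confrad(z;D))\bigr)$. This bypasses the preceding lemma entirely. One small slip: the constant $\log 4$ you claim for that comparison is too optimistic when $|\varphi(w)|$ is close to~$1$ (the distortion factor $(1-|\varphi(w)|)^{-2}$ then exceeds~$4$); a short case split on $|\varphi(w)|\lessgtr\tfrac12$ gives a universal constant such as $\log 8$, which is all you need.
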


\begin{proof}
Let $S=S_{z,w}$.
  By \eqref{eqn::contain_mean_cr_bound} of Corollary~\ref{cor::two_point_moments} we see that there exist
  $C_1 >0$ such that on the event $\{\confrad(z; U_z^S) \geq 8\eps\}$ we have
  \begin{equation}
  \label{eq::conditional}
  \left|\E\big[J_{z,\eps}^\cap - S_{z,w} \,|\, U_z^{S_{z,w}}\big]- \lptyp
      \log\frac{\confrad\big(z;U_z^{S_{z,w}}\big)}{\eps}\right|\leq C_1\,.
  \end{equation}
 We can write
  \begin{align}
     \E\left[J_{z,\eps}^\cap - S \right]
&=\E\left[(J_{z,\eps}^\cap - S)\one_{\{\confrad(z;U_z^S) \geq 8\eps\}} \right] +
     \E\left[(J_{z,\eps}^\cap - S)\one_{\{\confrad(z;U_z^S) < 8\eps\}} \right]\,. \label{eq::Jze-S}
  \end{align}

  Applying \eqref{eq::conditional}, we can write the first term of \eqref{eq::Jze-S} as,
\begin{align*}
  \E\left[(J_{z,\eps}^\cap - S)\one_{\{\confrad(z;U_z^S) \geq 8\eps\}} \right]
  &=
  \E\!\left[ \E[J_{z,\eps}^\cap - S\given U_z^S]\,
  \one_{\{\confrad(z;U_z^S) \geq 8\eps\}}\right] \\
  &= \E\!\left[\left(\lptyp\log \frac{\confrad(z;U_z^S)}{\eps} \pm C_1\right) \one_{\{\confrad(z;U_z^S) \geq 8\eps\}} \right] \\
  &= \lptyp \log\frac{\min(|z-w|,\confrad(z;D))}{\eps} \pm \text{const}\\
  &\quad - \E\!\left[\left(\lptyp\log \frac{\confrad(z;U_z^S)}{\eps}\right) \one_{\{\confrad(z;U_z^S) < 8\eps\}} \right]\,.
\end{align*}
Using \cite[Lemma~\ref{I-lem::annulus-loop}]{extremes}, there is a loop contained in $B(z,\eps)$ which surrounds $B(z,\eps/2^k)$ except with probability exponentially small in $k$, so the last term on the right is bounded by a constant (depending on $\kappa$).

If $J_{z,\eps}^\cap \geq S$, then $J_{z,\eps}^\cap - S$ counts the
number of loops $(\CL_z^k)_{k \in \N}$ after separating $z$
from $w$ before hitting $B(z,\eps)$.  If $J_{z,\eps}^\cap \leq
S$, then $S-J_{z,\eps}^\cap$ counts the number of loops
$(\CL_z^k)_{k \in \N}$ after intersecting $B(z,\eps)$ before
separating $z$ from $w$.  Consequently, by
Corollary~\ref{cor::loop_contain_stoch_dom}, we see that
absolute value of the second term of \eqref{eq::Jze-S}
is bounded by some constant $C_2 > 0$.
Putting these two terms of \eqref{eq::Jze-S} together, we obtain
\begin{equation} \label{eq::Jze-S-2}
  \left|\E\left[J_{z,\eps}^\cap - S_{z,w} \right] - \lptyp \log\frac{\min(|z-w|,\confrad(z;D))}{\eps} \right| \leq \text{const}\,.
\end{equation}

Subtracting \eqref{eq::Jze-S-2} from \eqref{eq::conditional} and rearranging gives~\eqref{eq::log_bound}.
\end{proof}

\begin{lemma} \label{lem::coalesce} Let $\{X_j\}_{j\in \N}$ be
  non-negative i.i.d.\ random variables whose law has a positive density
  with respect to Lebesgue measure on $(0,\infty)$ and for which there
  exists $\mgfparam_0 >0$ such that $\E[e^{\mgfparam_0 X_1}]<\infty$. For
  $a \geq 0 $, let $S^a_n=a+\sum_{j=1}^n X_j$, and for $a,M > 0$, let $\tau^a_M =
  \min\{n \geq 0 : S^a_n \geq M\}$.
  There exists a coupling between $S^a$ and
  $\widehat{S}^b$ (identically distributed to $S^b$ but not independent of it)
  and constants $C,c>0$ so that for all $0\leq a \leq b \leq M$, we have
  \[
  \P\bigg[S^a_{\tau^a_M} = \widehat{S}^b_{\widehat{\tau}^b_M}\bigg] \geq 1 - Ce^{-cM}.
  \]
\end{lemma}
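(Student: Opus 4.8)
The plan is to couple the two renewal sequences so that they agree from some point onward before either reaches the threshold $M$, and to do this with an exponentially small failure probability. First I would observe that both sequences start within distance $b \le M$ of each other and move to the right by i.i.d.\ increments with exponential tails and positive mean $m = \E[X_1] > 0$. By a standard large-deviations bound, except on an event of probability $O(e^{-cM})$, both $S^a$ and $\widehat S^b$ exceed $M/2$ only after at least $cM$ steps (say, after $n_0 := \lfloor M/(4m) \rfloor$ steps each renewal is still well below $M$), so we have a window of order $M$ renewal steps during which we are free to try to couple before the stopping times $\tau^a_M$ and $\widehat\tau^b_M$ are reached. The goal within this window is to make the two partial sums land on exactly the same value.

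The key step is the coalescence mechanism. Since $b - a \ge 0$, the sequence $\widehat S^b$ is ahead of $S^a$; I would try, at each renewal step, to let $S^a$ ``catch up.'' The clean way is to exploit that the increment law has a positive density on $(0,\infty)$: there is an interval $[\alpha,\beta] \subset (0,\infty)$ and a constant $\rho > 0$ with density at least $\rho$ on it. Given the current gap $g_n = \widehat S^b_n - S^a_n \ge 0$, if $g_n$ is small enough (say $g_n \le \beta - \alpha$) then we can couple the two increments $X^a_{n+1}$ and $\widehat X^b_{n+1}$ so that $X^a_{n+1} = X^b_{n+1} + g_n$ with probability at least $\rho(\beta - \alpha - g_n)_+$ — a maximal coupling on the overlap of the two shifted densities — and on that event the sums coalesce permanently; otherwise we re-draw them independently. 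To drive the gap into the coalescing range in the first place, note that the gap $g_n$ is itself a random walk (difference of the two increments when drawn independently) with zero drift and exponential tails, started from $g_0 = b - a \le M$; with independent increments it performs an unbiased fluctuation, and within $O(M)$ steps it enters $[0, \beta-\alpha]$ with probability $1 - O(e^{-cM})$ (a hitting estimate for a mean-zero walk with exponential tails). Once in that range, each step has a uniformly positive chance $\rho'$ of coalescing, so after a further $O(M)$ attempts coalescence occurs except with probability $(1-\rho')^{cM} = O(e^{-c'M})$. All of this is arranged to happen within the first $n_0 = O(M)$ renewal steps, which — on the earlier large-deviations event — is before either stopping time; after coalescence $S^a$ and $\widehat S^b$ move together, so they cross $M$ at the same renewal index and with the same overshoot, giving $S^a_{\tau^a_M} = \widehat S^b_{\widehat\tau^b_M}$.

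Finally I would assemble the three failure events — (i) one of the renewals runs too fast and reaches $M$ before the coupling window closes, (ii) the gap walk fails to enter the coalescing interval within $O(M)$ steps, (iii) having reached that interval, coalescence fails within the remaining budget — each of probability $O(e^{-cM})$ uniformly over $0 \le a \le b \le M$, and take a union bound, shrinking $c$ if necessary. The main obstacle is the bookkeeping in the second sub-step: one must confine the gap walk to a bounded interval \emph{and} keep it from wandering so far that either renewal crosses $M$, while simultaneously not having used up the coupling budget; the cleanest route is probably to first truncate to the event that, over the first $n_0$ steps, every partial sum of the (independent) increments stays in a window of width $O(M)$ around its mean — which has probability $1 - O(e^{-cM})$ by a maximal-inequality / Doob-type bound for the exponential-moment martingale — and to run the gap-reduction and coalescence moves entirely inside that safe event, where $\tau^a_M, \widehat\tau^b_M > n_0$ deterministically. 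Uniformity in $a,b$ is automatic since all the bounds depend only on $M$ (through $b-a \le M$ and the number of available steps $\ge cM$) and on the fixed increment law.
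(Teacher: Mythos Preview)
There is a genuine gap in your coalescence mechanism. By advancing both walks one step at a time, the gap $g_n=\widehat S^b_n-S^a_n$ evolves as a \emph{mean-zero} random walk (when you draw increments independently). Two of your claims about this walk are incorrect. First, the hitting estimate ``within $O(M)$ steps it enters $[0,\beta-\alpha]$ with probability $1-O(e^{-cM})$'' fails: a mean-zero walk started at $g_0$ avoids a neighborhood of $0$ for time $n$ with probability of order $g_0/\sqrt{n}$, not $e^{-cn}$; and if $g_0=b-a$ is comparable to $M$ (which the hypothesis $0\le a\le b\le M$ allows), then in $O(M)$ steps the walk has only moved $O(\sqrt{M})$ and typically has not reached the interval at all. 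Second, ``after a further $O(M)$ attempts'' overcounts: after each failed maximal-coupling attempt the gap can exit the interval, and the number of returns to a fixed interval by a mean-zero finite-variance walk in $n$ steps is only $\Theta(\sqrt{n})$ (local time scaling). Worse, the lower tail of this local time is only polynomially small --- with probability $\Theta(1/\sqrt{M})$ there are $O(1)$ visits --- so the overall failure probability of your scheme is at best polynomial in $1/M$, not $e^{-cM}$.

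The paper avoids this by \emph{not} advancing the two walks at the same rate. Instead, at each step it advances only the lagging walk until the two positions are within a fixed threshold $\theta$, and then makes a single maximal-coupling attempt on the next pair of increments. Because the lagging walk has positive drift $\E[X_1]>0$ relative to the (stationary) leader, each such ``epoch'' closes the gap in time proportional to the gap and advances the common position by an amount with exponential tails. Hence there are $\Theta(M)$ independent epochs before either walk crosses $M$ (by Cram\'er), each giving a uniformly positive coalescence probability, and the failure probability is genuinely $e^{-cM}$. The asymmetric stepping is exactly what converts your diffusive $\sqrt{M}$ into a ballistic $M$.
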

Similar but non-quantitative convergence results are known for more general distributions
(for example, see \cite[Chapt.~3.10]{MR2489436}).
  For our results we need this convergence to be exponentially fast,
  for which we did not find a proof, so we provide one.
\begin{proof}[Proof of Lemma~\ref{lem::coalesce}]
  For $M > N > 0$, we construct a coupling between $\rho_N$ and $\rho_M$ as
  follows.  We take $S_0 = 0$ and $\widehat{S}_0=N-M$,
  and then take $\{X_j\}_{j \in \N}$ and $\{\widehat{X}_j\}_{j \in \N}$ to be
  two i.i.d.\ sequences with law as in the statement of
  the lemma, with the two sequences coupled with one another in a manner
  that we shall describe momentarily.
We let $S_n = \sum_{i=1}^n X_i$ and $\widehat{S}_n = \widehat{S}_0+\sum_{i=1}^n \widehat{X}_i$.
Define stopping times
\[ \tau_N = \min\{ n \geq 0 : S_n \geq N\} \quad\quad\text{and}\quad\quad
\widehat{\tau}_N = \min\{ n \geq 0 : \widehat{S}_n \geq N\}.\] Then
$S_{\tau_N} - N \sim \rho_N$ and $\widehat{S}_{\widehat{\tau}_N} - N \sim
\rho_M$.  We will couple the $X_j$'s and $\widehat{X}_j$'s so that with high
probability $S_{\tau_N} = \widehat{S}_{\widehat{\tau}_N}$.

Lemma~\ref{lem::firsthitting} implies that there exists a law $\wt{\rho}$ on
$(0,\infty)$ with exponential tails such that $\wt{\rho}$ stochastically
dominates $\rho_M$ for all $M > 0$.  We choose $\theta$ to be big enough
so that $\wt{\rho}([0,2\theta])\geq 1/2$.

We inductively define a sequence of
pairs of integers $(i_k,j_k)$ for $k\in\{0,1,2,\dots\}$ starting with
$(i_0,j_0)=(0,0)$.
If $S_{i_k}+\theta \leq \widehat{S}_{j_k}$ then we set
$(i_{k+1},j_{k+1})\colonequals(i_k+1,j_k)$ and sample $X_{i_{k+1}}$
independently of the previous random variables.  If
$\widehat{S}_{j_k}+\theta\leq S_{i_k}$, then we set
$(i_{k+1},j_{k+1})\colonequals(i_k,j_k+1)$ and sample
$\widehat{X}_{j_{k+1}}$ independently of the previous random variables.
Otherwise, $\big|S_{i_k}-\widehat{S}_{j_k}\big|\leq\theta$. In that case,
we set $(i_{k+1},j_{k+1})\colonequals(i_k+1,j_k+1)$ and sample
$(X_{i_{k+1}},\widehat{X}_{j_{k+1}})$ independently of the previous random
variables and coupled so as to maximize the probability that
$S_{i_{k+1}}=\widehat{S}_{j_{k+1}}$.  Note that once the walks coalesce,
they never separate.

We partition the set of steps into epochs.
We adopt the convention that the $k$th step is from time~$k-1$ to time~$k$.
The first epoch starts at time $k=0$.
For the epoch starting at time $k$ (whose first step is $k+1$), we let
\[\ell(k) = \min\left\{k'\geq k : \min(S_{i_{k'}},\wh{S}_{j_{k'}}) \geq \max(S_{i_{k}},\wh{S}_{j_{k}})-\theta \right\}.\]
Let $E_k$ be the event \[
E_k=\{|S_{i_{\ell(k)}}-\wh{S}_{j_{\ell(k)}}|\leq\theta\}. \] By our choice
of $\theta$, $\P[E_k] \geq 1/2$.  If event $E_k$ occurs, then we let
$\ell(k)+1$ be the last step of the epoch, and the next epoch starts at time
$\ell(k)+1$.  Otherwise,
we let $\ell(k)$ be the last step of the epoch, and the next epoch starts at time $\ell(k)$.

Let $D(t)$ denote the total variation distance between the law of $X_1$ and
the law of $t+X_1$.  Since $X_1$ has a density with respect to Lebesgue
measure which is positive in $(0,\infty)$, it follows that \[q\colonequals
\sup_{0\leq t \leq \theta} D(t) < 1.\] In particular, if the event $E$ occurs,
i.e., $\big|S_{i_{\ell(k)}}-\widehat{S}_{j_{\ell(k)}}\big|\leq\theta$, and the walks
have not already coalesced, then
$\P[S_{i_{\ell(k)+1}}\neq\widehat{S}_{j_{\ell(k)+1}}] \leq q$.

  Let $Y_k=\max(S_{i_k},\wh{S}_{j_k})$.  For the epoch starting at time $k$,
  the difference $Y_{\ell(k)}-Y_k$ is dominated by a random variable with
  exponential tails, since $\wt{\rho}$ has exponential tails. On the event
  $E_k$ there is one more step of size $Y_{\ell(k)+1}-Y_{\ell(k)}$ in the
  epoch. This step size is dominated by the maximum of two independent
  copies of the random variable $X_1$ and therefore has exponential tails.
  Thus if $k'$ is the start of the next epoch, then $Y_{k'}-Y_k$ is
  dominated by a fixed distribution (depending only on the law of $X_1$)
  which has exponential tails.  It follows from
  Cram\'er's theorem that for some $c >0$,
  it is exponentially unlikely that the number of epochs (before the
  walks overshoot $N$) is less than $c N$.

For each epoch, the walks have a $(1-q)\P[E_k]>0$
  chance of coalescing if they have not done so
  already.  After $c N$ epochs, the walkers have coalesced except
  with probability exponentially small in $N$, and except with exponentially
  small probability, these epochs all occur before the walkers overshoot $N$.
\end{proof}

\begin{lemma}
  \label{lem::first_split2}
There exist constants $C_3,c>0$ (depending only on $\kappa$) such that
if $z,w \in D$ are distinct, and
$0<\eps'\leq\eps\leq r$ where $r=\min(|z-w|,\confrad(z;D))$,
then
\begin{equation} \label{eq::exp_bound}
    \E\left[\left(\E\big[J^\cap_{z,\eps} - J^\cap_{z,\eps'} \,|\, U_{z}^{S_{z,w}} \big] -   \E[J^\cap_{z,\eps}
    - J^\cap_{z,\eps'}]\right)^2\right]  \leq C_3 \left(\frac{\eps}{r}\right)^{c}\,.
\end{equation}
\end{lemma}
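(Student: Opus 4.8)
The plan is to find a \emph{deterministic} constant that the conditional expectation approximates in $L^2$, and to show that the only large quantity involved, $\lptyp\log(\eps/\eps')$, enters solely through differences that cancel. Write $S=S_{z,w}$, $U=U_z^{S}$, $\rho=\confrad(z;U)$ and $\CF=\sigma(U)$, and recall $r=\min(|z-w|,\confrad(z;D))$; $S$ and $\rho$ are $\CF$-measurable, and I set $Y:=\E[J^\cap_{z,\eps}-J^\cap_{z,\eps'}\giv\CF]$, so that the left-hand side of \eqref{eq::exp_bound} is $\Var(Y)\le\E[(Y+\lptyp\log(\eps/\eps'))^2]$. By the \hyperref[cor::onefourth]{Koebe quarter theorem} $\rho\le 4r$ (the upper bound in the Lemma preceding Lemma~\ref{lem::first_split}), so writing $\rho=re^{-\eta}$ we have $\eta\ge-\log4$; and since by \cite[Lemma~\ref{I-lem::annulus-loop}]{extremes} there is a loop contained in $B(z,r)$ surrounding $B(z,r2^{-k})$ (which then separates $z$ from $w$) except with probability exponentially small in $k$, we get $U\supseteq B(z,r2^{-k})$ on that event, hence $\P[\eta>t]\le Ce^{-ct}$. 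With $\CG:=\{\rho\ge 4\eps\}$ this gives $\P[\CG^c]\le C(\eps/r)^{c}$, and $(\eps/\rho)^{c}=(\eps/r)^{c}e^{c\eta}$ has a finite moment of any fixed order once $c$ is made small.

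The heart of the matter is a one-variable estimate for the \emph{reference function} $F(s):=\E[J^\cap_{0,s}]$ of a $\CLE_\kappa$ in $\D$ ($0<s<1$): I claim $F(s)=\lptyp\log(1/s)+f(s)$ where $f$ is bounded, $f(0^+)$ exists, and $|f(s)-f(0^+)|\le Cs^{c}$. To see this, let $(T_i)_{i\ge1}$ be the i.i.d.\ increments of $-\log\confrad$ along the loops of that $\CLE_\kappa$ surrounding $0$ (so $\lptyp=1/\E[T_1]$; as used in Section~\ref{sec::tail} the $T_i$ have exponential moments, and their law has a positive density on $(0,\infty)$ as required by Lemma~\ref{lem::coalesce}), and let $\sigma_M=\min\{n\ge0:T_1+\dots+T_n\ge M\}$. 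By the \hyperref[cor::onefourth]{Koebe quarter theorem}, $\Loop_0^k$ meets $B(0,s)$ whenever $\confrad(0;U_0^k)\le s$ and only if $\confrad(0;U_0^k)<4s$, so $\sigma_{\log(1/(4s))}\le J^\cap_{0,s}\le\sigma_{\log(1/s)}$; thus $J^\cap_{0,s}$ differs from $\sigma_{\log(1/s)}$ by a nonpositive error whose conditional law, given the renewal state near scale $s$, has uniformly bounded exponential moments and equilibrates exponentially fast as $s\downarrow0$. On the other hand, Lemma~\ref{lem::coalesce} applied to $\sum T_i$ shows the overshoot of that renewal across a level $M$ converges in total variation to a limit at rate $e^{-cM}$, whence $\E[\sigma_M]=\lptyp M+g(M)$ with $|g(M)-g(\infty)|\le Ce^{-cM}$. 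Combining these gives the claimed form of $F$, and in particular $F(s)-F(s')=\lptyp\log(s'/s)+O(s^{c})$ for $0<s'\le s<1$.

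On $\CG$ the estimate for $Y$ now follows. By the renewal (domain Markov) property, conditionally on $\CF$ the loops of $\Gamma$ inside $U$ form a $\CLE_\kappa$ in $U$; since $\rho\ge4\eps$, none of $\Loop_z^1,\dots,\Loop_z^{S}$ meet $B(z,\eps)\supseteq B(z,\eps')$, so $J^\cap_{z,\eps}-S$ and $J^\cap_{z,\eps'}-S$ are the first-hitting indices of $B(z,\eps)$ and $B(z,\eps')$ for that $\CLE_\kappa(U)$. Mapping $U$ conformally to $\D$ with $z\mapsto0$ and using the \hyperref[thm::distortion]{Koebe distortion theorem} to sandwich the image of $B(z,\eps)$ (resp.\ $B(z,\eps')$) between the concentric balls $B(0,(\eps/\rho)(1\pm O(\eps/\rho)))$ (resp.\ $B(0,(\eps'/\rho)(1\pm O(\eps'/\rho)))$), together with the bound $F(s)-F(s')=\lptyp\log(s'/s)+O(s^c)$ from the previous step (at $s=\eps/\rho$, $s'=\eps'/\rho$, where $\log(s'/s)=-\log(\eps/\eps')$), one gets $|Y-(F(\eps/\rho)-F(\eps'/\rho))|\le C(\eps/\rho)^{c}$ and hence $|Y+\lptyp\log(\eps/\eps')|\le C(\eps/\rho)^{c}=C(\eps/r)^{c}e^{c\eta}$ on $\CG$. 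On $\CG^c$ only a crude bound is needed: decomposing $J^\cap_{z,\eps}-J^\cap_{z,\eps'}$ through the stopping indices $J^\subset_{z,\eps}$ and $S$ and applying the strong Markov property of $\CLE_\kappa$, the Koebe theorems and Corollary~\ref{cor::loop_contain_stoch_dom} exactly as in the proof of Theorem~\ref{thm::surround_two_point_bound}, the terms $\lptyp\log\confrad(z;D)$ and $\lptyp\log\rho$ cancel and one is left with $|Y+\lptyp\log(\eps/\eps')|\le C(1+\eta)$ there (the residual $\eta$ arising from the possibility $\rho\ll\eps$). Since $\{\rho<4\eps\}\subseteq\{\eta>\log(r/(4\eps))\}$ and $\eta$ has exponential tails, $\E[(Y+\lptyp\log(\eps/\eps'))^2\one_{\CG^c}]\le C\E[(1+\eta)^2\one_{\eta>\log(r/(4\eps))}]\le C(\eps/r)^{c}$ after shrinking $c$.

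Putting the two regimes together,
\[
  \Var(Y)\ \le\ \E[(Y+\lptyp\log(\eps/\eps'))^2]\ \le\ C(\eps/r)^{2c}\,\E[e^{2c\eta}]+C(\eps/r)^{c}\ \le\ C_3\,(\eps/r)^{c},
\]
which is \eqref{eq::exp_bound}. I expect the main obstacle to be the analysis of the reference function $F$ — upgrading the merely bounded error term of \eqref{eqn::contain_mean_cr_bound} in Corollary~\ref{cor::two_point_moments} to one that converges at an exponential rate, so that the large quantity $\lptyp\log(\eps/\eps')$ survives only through the differences $F(\eps/\rho)-F(\eps'/\rho)$ (and its analogue on $\CG^c$) and therefore does not contaminate the $(\eps/r)^c$ bound even when $\eps'\ll\eps$. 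That exponential convergence of the renewal overshoot is exactly the content of Lemma~\ref{lem::coalesce}, and tracking the various bounded-but-equilibrating error terms ($J^\cap_{0,s}$ versus $\sigma_{\log(1/s)}$, the non-round image $\varphi(B(z,\eps))$ versus a concentric ball, and the $\CG^c$ decomposition) is the part that will require the most care.
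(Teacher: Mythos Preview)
Your approach differs genuinely from the paper's. The paper couples $\Gamma$ to an independent copy $\wt\Gamma$ via Lemma~\ref{lem::coalesce}: writing the left side of \eqref{eq::exp_bound} as (half of) $\E[\Delta^2]$ with $\Delta=Y-\wt Y$, it expresses $\Delta$ through a coalescence index $K$ for the two log-conformal-radius sequences and shows that $J^\cap_{z,\hat\eps}-K$ and $\wt J^\cap_{z,\hat\eps}-\wt K$ agree up to low distortion (Lemma~\ref{lem::mean_loops_lcr}) whenever $\confrad(z;U_z^K)$ is large compared to $\hat\eps$, and that their difference has uniformly bounded fourth moment otherwise. You instead seek a deterministic constant, $-\lptyp\log(\eps/\eps')$, that $Y$ approximates, via a sharpening of \eqref{eqn::contain_mean_cr_bound}: the claim that $F(s)-\lptyp\log(1/s)$ converges to a limit at rate $s^c$ can indeed be extracted from the full-plane coupling of Theorem~\ref{thm::full-plane} (this is the mechanism behind Lemma~\ref{lem::mean_loops_inr}), and on $\CG$ your argument is sound. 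What your route buys is a clean identification of the constant $Y$ is fluctuating around; what the paper's coupling buys is that the bad-event analysis never requires conditioning on ``inside'' information.

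The gap is on $\CG^c$. When $\inrad(z;U_z^S)<\eps$ your bound $|Y+\lptyp\log(\eps/\eps')|\le C(1+\eta)$ reduces to controlling $\E[S-J^\cap_{z,\eps}\giv U_z^S]$. The tools you invoke --- Corollary~\ref{cor::loop_contain_stoch_dom}, the strong Markov property, the decomposition through $J^\subset_{z,\eps}$ --- are all adapted to the outward-in filtration $\sigma(\Loop_z^1,\dots,\Loop_z^k)$; they bound $J^\subset_{z,r}-J^\cap_{z,r}$ given the \emph{past}. Here, however, you condition on $U_z^S$ alone, which is future information from the viewpoint of the index $J^\cap_{z,\eps}\le S$, and nothing you cite shows that the geometric-tail domination survives this conditioning. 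Knowing that $U_z^S$ is anomalously small could bias the outer configuration toward having many loops that hit $B(z,\eps)$ while still surrounding $w$, and your sketch (``the terms $\lptyp\log\confrad(z;D)$ and $\lptyp\log\rho$ cancel'') does not address this. The paper's coupling sidesteps the issue: $K$ is a stopping time for the renewal process, and on the event $\confrad(z;U_z^K)<\hat\eps$ one has $K\ge J^\cap_{z,\hat\eps}$, so $K-J^\cap_{z,\hat\eps}$ is bounded by a forward-in-time coalescence tail --- no reverse conditioning is needed.
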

\begin{proof}
  We construct a coupling between three $\CLE_\kappa$'s, $\Gamma$,
  $\wt{\Gamma}$, and $\acute\Gamma$, on the domain $D$.
  Let $S=S_{z,w}$, $\wt{S}=\wt{S}_{z,w}$,
  and $\acute S=\acute S_{z,w}$ denote the three corresponding stopping times.
  We take $\Gamma$ and $\acute\Gamma$ to be independent.
  On $D\setminus \acute{U}_z^{\acute S}$, we take $\wt{\Gamma}$ to be identical to $\acute\Gamma$.
  In particular, $\wt S=\acute S$ and $\wt{U}_z^{\wt S} = \acute{U}_z^{\acute S}$.  Within
  $\wt{U}_z^{\wt S}$, we couple $\wt\Gamma$ to $\Gamma$ as follows.
  We sample so that the sequences
  \[
  \left\{-\log \confrad\left(z; U_{z}^{S+k}\right) \right\}_{k\in
  \N}\quad \text{and}\quad \left\{-\log \confrad\left(z;
    \wt{U}_{z}^{\wt{S}+k}\right)\right\}_{k\in \N}
  \]
  are coupled as in Lemma~\ref{lem::coalesce}. Define
  \[
  K = \min\left\{k \geq S \,:\confrad\left(z;
      U_{z}^k\right)=\confrad\left(z;
      \wt{U}_{z}^{\wt{k}}\right)\text{ for some }\wt{k} \geq \wt{S}\right\}\,,
  \]
  and let $\wt{K}$ be the value of $\wt{k}$ for which the conformal radius
  equality is realized. Let $\psi:U_z^K\to \wt{U}_z^{\wt{K}}$ be the unique
  conformal map with $\psi(z)=z$ and $\psi'(z)>0$.  We take
  $\wt{\Gamma}$ restricted to
  $\wt{U}_z^{\wt{K}}$
  to be given by the image under $\psi$ of the restriction of
  $\Gamma$ to
  $U_z^K$.

  Since $|\log \confrad(z;U_z^S) - \log r|$ and $|\log \confrad(z;\wt{U}_z^{\wt{S}}) - \log r|$
  have exponential tails,
  and since the coupling time from Lemma~\ref{lem::coalesce} has exponential tails,
  each of $K-S$, $\wt{K}-\wt{S}$, and
  $|\log \confrad(z;U_z^K) - \log r| = |\log \confrad(z;\wt{U}_z^{\wt{K}}) - \log r|$
  have exponential tails, with parameters depending only on $\kappa$.

  Let
  \[
  \Delta\colonequals   \E[J_{z,\eps}^\cap - J_{z,\eps'}^\cap\,\given U_z^S]
  - \E[\wt{J}_{z,\eps}^\cap - \wt{J}_{z,\eps'}^\cap\,\given \wt{U}_z^{\wt{S}}]\,.
  \]
  In the above coupling $U_z^S$ and $\wt{U}_z^{\wt{S}}$ are independent, so we have
  \[
  \E[J_{z,\eps}^\cap - J_{z,\eps'}^\cap\,\given U_z^S] - \E[J_{z,\eps}^\cap
  - J_{z,\eps'}^\cap] =
  \E[\Delta \,\given U_z^S]\,.
  \]
  Therefore, the left-hand
  side of \eqref{eq::exp_bound} is equal to $\E[(\E[\Delta|U_z^S])^2]$. Jensen's inequality applied to the inner expectation yields
  \[
  \E[(\E[\Delta|U_z^S])^2] \leq \E[\E[\Delta^2\given U_z^S]] = \E[\Delta^2]\,.
  \]

  We can also write $\Delta$ as

  \begin{align*}
    \Delta&=\E\big[J_{z,\eps}^\cap - J_{z,\eps'}^\cap -
    \wt{J}_{z,\eps}^\cap +
    \wt{J}_{z,\eps'}^\cap\,\given U_z^S,\wt{U}_z^{\wt{S}}\big] \\
    &= \E\!\left[ J_{z,\eps}^\cap -K - \wt{J}_{z,\eps}^\cap +\wt{K}
      \given U_z^S,\wt{U}_z^{\wt{S}}\right]
      - \E\!\left[ J_{z,\eps'}^\cap - K - \wt{J}_{z,\eps'}^\cap +\wt{K}
          \given U_z^S,\wt{U}_z^{\wt{S}}\right]\,.
  \end{align*}

  and then use the inequality $(a+b)^2 \leq 2 (a^2+b^2)$ for $a,b\in\R$ to bound
  \[ \Delta^2 \leq 2 Y_\eps + 2 Y_{\eps'}\,, \]
where for $\hat\eps\leq\eps$ we define
\[
  Y_{\hat\eps} \colonequals \E\!\left[ J_{z,\hat\eps}^\cap - K -
      \wt{J}_{z,\hat\eps}^\cap + \wt{K} \,\given U_z^S,\wt{U}_z^{\wt{S}}\right]^2\,.
\]

  We define the event
  \[
    A = \{ \confrad(z;U_z^K) \geq \sqrt{r \eps}\}\,.
  \]
Then
\begin{align*}
\E[Y_{\hat\eps}\,\one_A]
&=  \E\!\left[\E\!\left[ J_{z,\hat{\eps}}^\cap -K - \wt{J}_{z,\hat{\eps}}^\cap +\wt{K} \,\given U_z^S,\wt{U}_z^{\wt{S}}\right]^2 \one_A \right]\\
 &\leq \E\!\left[\E\!\left[ (J_{z,\hat{\eps}}^\cap - K - \wt{J}_{z,\hat{\eps}}^\cap +\wt{K})^2\,\one_A  \,\big|\, U_z^S,\wt{U}_z^{\wt{S}} \right]\right]\\
 &= \E\!\left[ (J_{z,\hat{\eps}}^\cap - K - \wt{J}_{z,\hat{\eps}}^\cap +\wt{K})^2\,\one_A\right]\\
 &\leq \const\times(\eps/r)^c
  \end{align*}
  where the last inequality follows from Lemma~\ref{lem::mean_loops_lcr},
 for some $c>0$ and for suitably large $r/\eps$.

  Next we apply Cauchy-Schwarz to find that
  \[
  \E[Y_{\hat\eps}\one_{A^c}] \leq \sqrt{\E[Y_{\hat\eps}^2]\P[A^c]}.
   \]
  Lemma~\ref{lem::coalesce} and the construction of the coupling between
  $\Gamma$ and $\wt{\Gamma}$ imply that $\P[A^c] \leq \text{const} \times
  (\eps/r)^c$ for some $c>0$. It therefore suffices to show that
  $\E[Y_{\hat\eps}^2]\leq C$ for some constant $C$ which does not depend on
  $\eps$ or $\eps'$.  By Jensen's inequality, it suffices to show that
  there exists $C$ such that
  \begin{equation} \label{eq::bounded_fourth}
  \E[(J_{z,\hat\eps}^\cap - K - \wt{J}_{z,\hat\eps}^\cap + \wt{K})^4]
    \leq C\,.   \end{equation}

  To prove \eqref{eq::bounded_fourth}, we consider the event $B=\{
  \confrad(z;U_z^K)\geq \eps\}$.
  By Lemma~\ref{lem::mean_loops_lcr},
  \[\E[(J_{z,\hat\eps}^\cap - K - \wt{J}_{z,\hat\eps}^\cap + \wt{K})^4 \one_{B}] \leq \const\]
  where the constant depends only on $\kappa$.

  Using $(a+b)^4\leq 8(a^4+b^4)$ for $a,b\in\R$, and the fact that
  $J_{z,\hat\eps}^\cap - K$ and $\wt{J}_{z,\hat\eps}^\cap - \wt{K}$
  are equidistributed, we have
  \[ \E[(J_{z,\hat\eps}^\cap - K - \wt{J}_{z,\hat\eps}^\cap + \wt{K})^4 \,\one_{B^c}] \leq 16\, \E[(J_{z,\hat\eps}^\cap - K)^4\,\one_{B^c}]\,.\]
On the event $B^c$, we have $K\geq J_{z,\hat\eps}^\cap$.  Conditional on this, $K-J_{z,\hat\eps}^\cap$
has exponentially decaying tails, so the above fourth moment is bounded by a constant (depending on $\kappa$),
which completes the proof.
\end{proof}

\begin{lemma} \label{lem::cov_bound}
  Suppose $0<\eps_1(z)\leq\eps$ and $0<\eps_2(z)\leq\eps$ on a compact subset $K\subset D$ of the domain $D$.
  Then there is some $c>0$ (depending on $\kappa$) and $C_0>0$ (depending on $\kappa$, $D$, and $K$) for which
    \begin{equation}
    \label{eq::cov_bound}
    \iint\limits_{K \times K} |\cov(\Loopcount_z(\eps_1(z)) - \Loopcount_z(\eps_2(z)),
    \Loopcount_w(\eps_1(w)) -
    \Loopcount_w(\eps_2(w)))| \, dz\,dw \leq C_0 \eps^c\,.
  \end{equation}
\end{lemma}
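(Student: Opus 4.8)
The plan is to exploit a conditional independence that holds once the CLE has separated $z$ from $w$, and to feed it into the second--moment estimate Lemma~\ref{lem::first_split2}. Fix distinct $z,w\in K$, write $X_z\colonequals\Loopcount_z(\eps_1(z))-\Loopcount_z(\eps_2(z))$ and $X_w$ analogously, and recall $S_{z,w}$ and the $\sigma$-algebra $\Sigma_{z,w}$ from \eqref{eq::sigma}. First I would record three facts. A loop surrounding $z$ but not $w$ and a loop surrounding $w$ but not $z$ cannot be nested, so the regions $U_z^{S_{z,w}}$ and $U_w^{S_{z,w}}$ are disjoint. Since a loop surrounds $B(z,r)$ exactly when it surrounds $z$ and is disjoint from $B(z,r)$, and since $\inrad(z;U_z^k)$ is nonincreasing in $k$, we have $\Loopcount_z(r)=J^\cap_{z,r}-1$ exactly; hence $X_z$ is a function of $\{\Loop_z^k:k\le S_{z,w}\}$ — which is $\Sigma_{z,w}$-measurable — together with the restriction of $\Gamma$ to $U_z^{S_{z,w}}$, and likewise for $X_w$. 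By the spatial Markov property of nested CLE, conditionally on $\Sigma_{z,w}$ the restrictions of $\Gamma$ to the disjoint domains $U_z^{S_{z,w}}$ and $U_w^{S_{z,w}}$ are independent CLEs in those domains. Therefore $X_z$ and $X_w$ are conditionally independent given $\Sigma_{z,w}$, so $\Cov(X_z,X_w)=\Cov(\E[X_z\given\Sigma_{z,w}],\E[X_w\given\Sigma_{z,w}])$, and Cauchy--Schwarz gives $|\Cov(X_z,X_w)|\le\sqrt{\Var(\E[X_z\given\Sigma_{z,w}])\,\Var(\E[X_w\given\Sigma_{z,w}])}$. Everything reduces to bounding $\Var(\E[X_z\given\Sigma_{z,w}])$, uniformly over the choice of scales.

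In the off-diagonal regime $|z-w|>\eps$ (I would assume $\eps<\dist(K,\partial D)$, so that $r_z\colonequals\min(|z-w|,\confrad(z;D))\ge c_K|z-w|>\eps$ for a constant $c_K=c_K(D,K)$), I would use the law of total variance with respect to $\sigma(U_z^{S_{z,w}})\subseteq\Sigma_{z,w}$: $\Var(\E[X_z\given\Sigma_{z,w}])=\Var(\E[X_z\given U_z^{S_{z,w}}])+\E[\Var(\E[X_z\given\Sigma_{z,w}]\mid U_z^{S_{z,w}})]$. Taking (without loss of generality) $\eps_1(z)\le\eps_2(z)$, so $X_z=J^\cap_{z,\eps_1(z)}-J^\cap_{z,\eps_2(z)}$, Lemma~\ref{lem::first_split2} with $(\eps,\eps',r)=(\eps_2(z),\eps_1(z),r_z)$ bounds the first term by $C_3(\eps/r_z)^c$ — and this bound does not degrade as $\eps_1(z)\to0$, which is precisely the uniformity we need. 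The second term vanishes off the $\sigma(U_z^{S_{z,w}})$-measurable event $\mathcal B_z\colonequals\{\inrad(z;U_z^{S_{z,w}})<\eps_2(z)\}$, since off $\mathcal B_z$ the variable $X_z$ is a functional of $\Gamma|_{U_z^{S_{z,w}}}$ and hence $\E[X_z\given\Sigma_{z,w}]=\E[X_z\given U_z^{S_{z,w}}]$. On $\mathcal B_z$, Corollary~\ref{cor::onefourth} gives $\confrad(z;U_z^{S_{z,w}})<4\eps<4r_z$, so the exponential left tail of $\confrad(z;U_z^{S_{z,w}})/r_z$ (cf.\ the lemma just before Lemma~\ref{lem::first_split} and the proof of Lemma~\ref{lem::first_split2}, which rests on \cite[Lemma~\ref{I-lem::annulus-loop}]{extremes} and the Koebe quarter theorem) yields $\P[\mathcal B_z]=O((\eps/r_z)^{c'})$; and on $\mathcal B_z$ the difference $\E[X_z\given\Sigma_{z,w}]-\E[X_z\given U_z^{S_{z,w}}]$ is, up to a deterministic $O(1)$, the deviation from its mean of the number of loops around $z$ lying between scales $\eps_2(z)$ and $\confrad(z;U_z^{S_{z,w}})$, because the $\lptyp\log$ contributions of the scales $\eps_1(z),\eps_2(z)$ and of $\confrad(z;U_z^{S_{z,w}})$ cancel exactly as in Lemma~\ref{lem::first_split} (using Corollary~\ref{cor::two_point_moments}). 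That fluctuation has conditional second moment $O(\log(\eps_2(z)/\confrad(z;U_z^{S_{z,w}}))\vee 1)$, and integrating it against $\one_{\mathcal B_z}$ — partitioning $\mathcal B_z$ according to whether $J^\cap_{z,\eps_1(z)}\le S_{z,w}$, each piece carrying probability $O((\eps_i(z)/r_z)^{c'})$, which absorbs the logarithm via $t^{c'}\log(1/t)\le Ct^{c'/2}$ for $t=\eps_i(z)/r_z\le\eps/r_z$ — gives $O((\eps/r_z)^{c'/2})$, uniformly in the scales. Hence $\Var(\E[X_z\given\Sigma_{z,w}])=O((\eps/|z-w|)^{c''})$, and after shrinking $c''$ below $2$, $\iint_{(K\times K)\cap\{|z-w|>\eps\}}|\Cov(X_z,X_w)|\,dz\,dw\le C\eps^{c''}\iint_{K\times K,\,|z-w|>\eps}|z-w|^{-c''}\,dz\,dw=O(\eps^{c''})$, with constant depending only on $\kappa$, $D$, $K$.

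In the diagonal regime $|z-w|\le\eps$, Lemma~\ref{lem::first_split2} is vacuous, but I would carry out the same expansion of $\E[X_z\given\Sigma_{z,w}]$ with $r=\min(|z-w|,\confrad(z;D))$: now $\confrad(z;U_z^{S_{z,w}})$ is comparable to $|z-w|$ with exponential tails, and after splitting the loop sequence around $z$ at the indices $S_{z,w}$ and $J^\cap_{z,\eps_i(z)}$ and invoking Corollary~\ref{cor::two_point_moments}, every term carrying a scale $\eps_i(z)$ (always with coefficient $\lptyp$) cancels, as does the $\confrad(z;U_z^{S_{z,w}})$-dependence, leaving renewal fluctuations over an interval of length $O(\log(\eps/|z-w|))$ plus exponentially integrable overshoots. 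Thus $\Var(\E[X_z\given\Sigma_{z,w}])=O((\log(\eps/|z-w|)+1)^2)$ uniformly in the scales, and since $\int_0^\eps(\log(\eps/\rho)+1)^2\rho\,d\rho=O(\eps^2)$ we get $\iint_{(K\times K)\cap\{|z-w|\le\eps\}}|\Cov(X_z,X_w)|\,dz\,dw=O(\area(K)\,\eps^2)$. Summing the two regimes proves \eqref{eq::cov_bound} with $c=c''$.

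The hard part will be the uniformity over all measurable $\eps_1,\eps_2\colon K\to(0,\eps]$: because $\eps_i(z)$ may be arbitrarily small, both $\E[X_z]$ and $\Var(X_z)$ can be of order $\log(1/\eps_i(z))$, so a naive Cauchy--Schwarz is useless and one must genuinely use that the $\lptyp\log(\mathrm{scale})$ terms in $\E[X_z\given\Sigma_{z,w}]$ cancel (the phenomenon of Lemma~\ref{lem::first_split}, quantified by Lemma~\ref{lem::first_split2}), so that the surviving $\eps_i(z)$-dependence of $\E[X_z\given\Sigma_{z,w}]-\E[X_z]$ lives only on events of probability a power of $\eps_i(z)/|z-w|$, which kills the logarithm. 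The residual effort is the somewhat delicate case analysis — according to where $J^\cap_{z,\eps_1(z)}$ and $J^\cap_{z,\eps_2(z)}$ fall relative to $S_{z,w}$, and the relative sizes of $\eps_i(z)$, $|z-w|$, $\confrad(z;U_z^{S_{z,w}})$ — that makes the cancellation and the tail bounds precise; this is routine given Lemmas~\ref{lem::near_far}, \ref{lem::first_split}, \ref{lem::first_split2}, \ref{lem::firsthitting} and Corollary~\ref{cor::two_point_moments}.
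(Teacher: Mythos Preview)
Your approach is essentially the paper's: exploit conditional independence of $X_z$ and $X_w$ given $\Sigma_{z,w}$, apply Cauchy--Schwarz to reduce to bounding $\E\big[\E[\rc{X}_z\mid\Sigma_{z,w}]^2\big]$, and split into the regimes $|z-w|>\eps$ and $|z-w|\le\eps$. The two arguments differ only in the bookkeeping. In the far regime the paper invokes Lemma~\ref{lem::first_split2} directly for the quantity $\E\big[\E[\rc{Y}_z\mid\Sigma_{z,w}]^2\big]$; you instead insert a law-of-total-variance step with respect to $\sigma(U_z^{S_{z,w}})\subseteq\Sigma_{z,w}$ and isolate the event $\mathcal B_z=\{\inrad(z;U_z^{S_{z,w}})<\eps_2(z)\}$, which is arguably more careful since Lemma~\ref{lem::first_split2} is literally stated for conditioning on $U_z^{S_{z,w}}$, and the two conditional expectations agree only off $\mathcal B_z$. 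In the near regime the paper makes the decomposition you sketch (``splitting at $S_{z,w}$'') explicit as $Y_z=Y^{(1)}_{z,w}+Y^{(2)}_{z,w}$, bounding $\E[(Y^{(1)})^2]$ via Theorem~\ref{thm::surround_two_point_bound} and $\E\big[\E[\rc{Y}^{(2)}\mid\Sigma_{z,w}]^2\big]\le C$ via Lemma~\ref{lem::first_split}; this yields exactly your claimed $O((\log(\eps/|z-w|)+1)^2)$, but with the cancellations made concrete rather than asserted.
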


\begin{proof}
  For a random variable $X$, we let $\rc{X}$ denote
  \begin{equation}
    \label{eqn::loopcount_bar_def}
    \rc{X} = X - \E[X]\,.
  \end{equation}
  We let $Y_z$ denote
  \begin{equation}
   Y_{z}\colonequals {J}_{z,\,\eps_1(z)}^\cap-{J}_{z,\,\eps_2(z)}^\cap\,.
  \end{equation}
  Recalling that $J_{z,r}^{\cap}=\Loopcount_z(r)+1$, we see that
  \[ \E[\rc{Y}_z \rc{Y}_w] = \cov(\Loopcount_z(\eps_1(z)) - \Loopcount_z(\eps_2(z)),
    \Loopcount_w(\eps_1(w)) - \Loopcount_w(\eps_2(w))) \,,\]
   so we need to bound $\big|\E[\rc{Y}_z \rc{Y}_w]\big|$.

   We treat two subsets of $K\times K$ separately: (1) the near regime
   $\{(z,w) \,:|z-w|\leq\eps\}$, and (2) the far regime
   $\{(z,w)\,:\eps<|z-w|\}$.

For the near regime, we first write \[Y_z = Y_{z,w}^{(1)} +
Y_{z,w}^{(2)}\,,\] where $Y_{z,w}^{(1)}$ counts those loops surrounding
$B(z,\min(\eps_1(z),\eps_2(z)))$ and intersecting
$B(z,\max(\eps_1(z),\eps_2(z)))$ with index smaller than $S_{z,w}$, and
$Y_{z,w}^{(2)}$ counts those loops with index at least $S_{z,w}$. Then
$\Sigma_{z,w}$ determines $Y_{z,w}^{(1)}$ and $Y_{w,z}^{(1)}$, and
conditional on $\Sigma_{z,w}$, $Y_{z,w}^{(2)}$ and $Y_{w,z}^{(2)}$ are
independent (recall that $\Sigma_{z,w}$ was defined in \eqref{eq::sigma}).
Thus $Y_{z,w}^{(i)}$ and $Y_{w,z}^{(j)}$ are conditionally independent
(given $\Sigma_{z,w}$) for $i,j\in\{1,2\}$.

   Observe that
   \begin{equation}\label{eq::sum-of-4}
     \big|\E[\rc{Y}_z\rc{Y}_w]\big| \leq \sum_{i,j\in\{1,2\}} \big|\E[\rc{Y}_{z,w}^{(i)}\rc{Y}_{w,z}^{(j)}]\big|\,.
   \end{equation}
   For $i,j\in\{1,2\}$,
   \begin{align}
     \left|\E\big[\rc{Y}_{z,w}^{(i)}\rc{Y}_{w,z}^{(j)}\big]\right|
     &= \left|\E\left[\E\big[\rc{Y}_{z,w}^{(i)}\rc{Y}_{w,z}^{(j)}\given\Sigma_{z,w}\big]\right]\right| \notag\\
     &= \left|\E\left[\E\big[\rc{Y}_{z,w}^{(i)}\given\Sigma_{z,w}\big] \, \E\big[\rc{Y}_{w,z}^{(j)}\given\Sigma_{z,w}\big]\right]\right| \notag\\
     &\leq \E\left[\E\big[\rc{Y}_{z,w}^{(i)}\given\Sigma_{z,w}\big]^2\right]^{1/2} \, \E\left[\E\big[\rc{Y}_{w,z}^{(j)}\given\Sigma_{z,w}\big]^2\right]^{1/2}\,. \label{eq::conditional-CS}    \end{align}

For the index $i=1$, we write
\begin{align*}
  \E\left[\E\big[\rc{Y}_{z,w}^{(1)}\given\Sigma_{z,w}\big]^2\right] = \E\left[\big(\rc{Y}_{z,w}^{(1)}\big)^2\right] \leq \E\left[(Y_{z,w}^{(1)})^2\right]
  &= \E\!\left[\E\big[(Y_{z,w}^{(1)})^2\big]\,\big|\, U_z^{J_{z,\eps}^\cap} \right]\,.
\end{align*}
But
\[
  Y_{z,w}^{(1)} \leq 1+\Loopcount_{z,w}\left(\Gamma|_{U_z^{J_{z,\eps}^\cap}}\right)\,.
\]
By Theorem~\ref{thm::surround_two_point_bound},
$\E[(1+\Loopcount_{z,w}(\Gamma|_U))^2] \leq\const+\const\times G_{U}(z,w)^2$, where
$G_U$ denotes the Green's function for the Laplacian in the domain $U$.
By the Koebe distortion theorem, the Green's function is in turn bounded by
$G_U(z,w) \leq \const+\const\times\max(0,\log(\confrad(z;U)/|z-w|))$.
Therefore,
  \[
  \E\left[\big(\rc{Y}_{z,w}^{(1)}\big)^2\right]
   \leq \E\left[O\left(1+\log^2\frac{|z-w|}{\confrad\big(z;U_z^{J_{z,\eps}^\cap}\big)}\right)\right]\,.
  \]
  By Lemma~\ref{lem::firsthitting}, $-\log \confrad\big(z;U_z^{J_{z,\eps}^\cap}\big) = -\log \eps + X$
  for some random variable $X$ with exponentially decaying tails. It
  follows that
  \begin{equation}
  \E\left[\E\big[\rc{Y}_{z,w}^{(1)}\given\Sigma_{z,w}\big]^2\right] =
   \E\left[\big(\rc{Y}_{z,w}^{(1)}\big)^2\right] = O\left(1+\log^2\frac{|z-w|}{\eps}\right)\,. \label{eq::Y1-bound}
  \end{equation}

  For the index $i=2$, we express $Y_{z,w}^{(2)}$ in terms of $J_{z,\eps_1(z)}$ and $J_{z,\eps_2(z)}$ and use Lemma~\ref{lem::first_split} twice (once with
  $\eps_1(z)$ and once with $\eps_2(z)$ playing the role of $\eps$ in the
  lemma statement) and subtract to write
\begin{align}
\E\big[\rc{Y}_{z,w}^{(2)}\given \Sigma_{z,w}\big] =
\E\big[\rc{Y}_{z,w}^{(2)}\given U_z^{S_{z,w}}\big]  &\leq \const \notag\\
 \E\left[\E\big[\rc{Y}_{z,w}^{(2)}\given
    \Sigma_{z,w}\big]^2\right] &\leq C \,.\label{eq::Y2-bound}
\end{align}
for some constant $C$ depending only on $\kappa$.

Combining \eqref{eq::sum-of-4}, \eqref{eq::conditional-CS}, \eqref{eq::Y1-bound}, and \eqref{eq::Y2-bound},
we obtain
\begin{align*}
\big|\E[\rc{Y}_z \rc{Y}_w]\big| &\leq
\const+\const\times\log^2\frac{\eps}{|z-w|}\notag,
\end{align*}
which implies
\begin{align}
\iint\limits_{\substack{K\times K\\ |z-w|\leq \eps}} \big|\E[\rc{Y}_z
\rc{Y}_w]\big|\,dz\,dw &\leq
\const\times\area(K)\times \eps^2\,. \label{eq::intermediate-regime}
\end{align}

For the far regime, we again condition on $\Sigma_{z,w}$, the loops up to and including the first
ones separating $z$ from $w$, and use Cauchy-Schwarz, as in \eqref{eq::conditional-CS},
but without first expressing $Y_z$ and $Y_w$ as sums:
\begin{equation}
 \left|\E\big[\rc{Y}_z\rc{Y}_w\big]\right|
  \leq \E\left[\E\big[\rc{Y}_z\given\Sigma_{z,w}\big]^2\right]^{1/2} \, \E\left[\E\big[\rc{Y}_w\given\Sigma_{z,w}\big]^2\right]^{1/2}\,.
  \label{eq::conditional-CS-2}
\end{equation}
  By Lemma~\ref{lem::first_split2}, we have
  \begin{equation}
    \label{eq::just_z}
    \E\big[
      \E[\rc{Y}_z\given\Sigma_{z,w}]^2\big] \leq C \left(\frac{\eps}{\min(|z-w|,\confrad(z;D))}\right)^c\,.
  \end{equation}
  Integrating over $\{(z,w)\in K\times K\,:\,\eps<|z-w|\}$ gives
  \eqref{eq::cov_bound}.
\end{proof}

\section{Properties of Sobolev spaces}
\label{sec::sobolev_spaces}

\makeatletter{}In this section we provide an overview of the distribution theory and
Sobolev space theory required for the proof of
Theorem~\ref{thm::existence_weighted}. We refer the reader to
\cite{tao-epsilon} or \cite{TAYLOR_PDE} for a more detailed introduction.

Fix a positive integer $d$. Recall that the Schwartz space $\Sc(\R^d)$ is
defined to be the set of smooth, complex-valued functions on $\R^d$
whose derivatives of all orders decay faster than any polynomial at
infinity. If $\beta = (\beta_1,\beta_2,\ldots,\beta_d)$ is a multi-index,
then the partial differentiation operator $\partial^\beta$ is defined by
$\partial^\beta=\partial_{x_1}^{\beta_1} \partial_{x_2}^{\beta_2}\cdots \partial_{x_d}^{\beta_d}$.
We equip $\Sc(\R^d)$ with the topology generated by the family of seminorms
\[
\left\{\raisebox{2pt}{$\displaystyle\|\phi\|_{n,\beta} \colonequals \sup_{x\in
      \R^d} |x|^n |\partial^\beta \phi(x)| \,:\,n \geq 0, \: \beta\text{ is a
      multi-index}$} \right\}\,.
\]
The space $\Sc'(\R^d)$ of tempered distributions is defined to be the space
of continuous linear functionals on $\Sc(\R^d)$.  We write the evaluation
of $f\in \Sc'(\R^d)$ on $\phi \in \Sc(\R^d)$ using the notation
$\ang{f,\phi}$.  For any Schwartz function $g\in\Sc(\R^d)$ there is an
associated continuous linear functional $\phi\mapsto \int_{\R^d}
g(x)\phi(x)\,dx$ in $\Sc'(\R^d)$, and $\Sc(\R^d)$ is a dense subset of
$\Sc'(\R^d)$ with respect to the weak* topology.

For $\phi\in \Sc(\R^d)$, its Fourier transform $\wh{\phi}$ is defined by
\[
\wh{\phi}(\xi) = \int_{\R^d} e^{-2\pi i x\cdot \xi}\phi(x)\,dx \quad \text{for
}\xi\in \R^d\,.
\]
Since $\phi\in \Sc(\R^d)$ implies $\wh{\phi}\in \Sc(\R^d)$
\cite[Section~1.13]{tao-epsilon} and since
$\ang{\wh{\phi}_1,\phi_2}=\iint\phi_1(x) e^{-2\pi i x\cdot y}
\phi_2(y)\,dx\,dy =\ang{\phi_1,\wh{\phi}_2}$ for all
$\phi_1,\phi_2\in\Sc(\R^d)$, we may define the Fourier transform $\wh{f}$
of a tempered distribution $f\in\Sc'(\R^d)$ by setting
$\ang{\wh{f},\phi}\colonequals \ang{f,\wh\phi}$ for each
$\phi\in\Sc(\R^d)$.

For $x\in \R^d$, we define $\ang{x}\colonequals (1+|x|^2)^{1/2}$. For
 $s\in \R$, define $H^s(\R^d) \subset \Sc'(\R^d)$ to be the set of
  functionals $f$ for which there exists $R^s_f\in L^2(\R^d)$ such that for
  all $\phi\in \Sc(\R^d)$,
\begin{equation} \label{eq::defHs}
\ang{\wh{f},\phi} = \int_{\R^d} R^s_f(\xi) \phi(\xi) \ang{\xi}^{-s}\,d\xi\,.
\end{equation}
Equipped with the inner product
\begin{equation} \label{eq::ft}
\ang{f,g}_{H^s(\R^d)} \colonequals \int_{\R^d} R^s_f(\xi) \overline{R^s_g(\xi)} \,d\xi\,,
\end{equation}
$H^s(\R^d)$ is a Hilbert space.  (The space $H^s(\R^d)$ is the same
as the Sobolev space denoted $W^{s,2}(\R^d)$ in the literature.)

Recall that the support of a function $f:\R^d \to \C$ is defined to the
closure of the set of points where $f$ is nonzero. Define $\T=[-\pi,\pi]$
with endpoints identified, so that $\T^d$, the $d$-dimensional torus, is a
compact manifold. If $M$ is a manifold (such as $\R^d$ or $\T^d$), we
denote by $C_c^\infty(M)$ the space of smooth, compactly supported
functions on $M$. We define the topology of $C_c^\infty(M)$ so that $\psi_n
\to \psi$ if and only if there exists a compact set $K\subset M$ on which
each $\psi_n$ is supported and $\partial^\alpha \psi_n \to \partial^\alpha
\psi$ uniformly, for all multi-indices $\alpha$ \cite{tao-epsilon}. We
write $C_c^\infty(M)'$ for the space of continuous linear functionals on
$C_c^\infty(M)$, and we call elements of $C_c^\infty(M)'$
\textit{distributions\/} on $M$. For $f\in C_c^\infty(\T^d)'$ and $k\in
\Z^d$, we define the Fourier coefficient $\wh{f}(k)$ by evaluating $f$ on
the element $x\mapsto e^{-ik\cdot x}$ of $C_c^\infty(\T^d)$. For
distributions $f$ and $g$ on $\T^d$, we define an inner product with
Fourier coefficients $\wh{f}(k)$ and $\wh{g}(k)$:
\begin{equation} \label{eq::fc}
\ang{f,g}_{H^s(\T^d)} \colonequals \sum_{k\in \Z^d} \ang{k}^{2s}\wh{f}(k) \overline{\wh{g}(k)}\,.
\end{equation}
If $f\in \Sc'(\R^d)$ is supported in $(-\pi,\pi)^d$, i.e.\ vanishes on functions which are supported in the complement of $(-\pi,\pi)^d$, then $f$ can
be thought of as a distribution on $\T^d$, and the norms corresponding to
the inner products in \eqref{eq::ft} and \eqref{eq::fc} are equivalent
\cite{TAYLOR_PDE} for such distributions $f$.

Note that $H^{-s}(\R^d)$ can be identified with the dual of $H^s(\R^d)$: we
associate with $f\in H^{-s}(\R^d)$ the functional $g\mapsto \ang{f,g}$
defined for $g\in H^{s}(\R^d)$ by
\[
\ang{f,g} \colonequals \int_{\R^d} R^{-s}_f(\xi) \overline{R^s_g(\xi)}\,d\xi\,.
\]
This notation is justified by the fact that when $f$ and $g$ are in
  $L^2(\R^d)$, this is the same as the $L^2(\R^d)$ inner product of $f$ and
  $g$.  By Cauchy-Schwarz, $g\mapsto \ang{f,g}$ is a bounded linear
functional on $H^s(\R^d)$. Observe that the operator topology on the
  dual $H^s(\R^d)$ coincides with the norm topology of $H^{-s}(\R^d)$ under
  this identification.

It will be convenient to work with local versions of the Sobolev spaces
$H^s(\R^d)$. If $h\in \Sc'(\R^d)$ and $\psi\in C_c^\infty(\R^d)$,
we define the product $\psi h \in \Sc'(\R^d)$ by $\ang{\psi h,f} =
\ang{h,\psi f}$. Furthermore, if $h\in H^{s}(\R^d)$, then $\psi h\in H^s(\R^d)$
as well \cite[Lemma~4.3.16]{ban-crainic}.  For $h\in C_c^\infty(D)'$, we
say that $h \in \Hloc^{s}(D)$ if $\psi h \in H^{s}(\R^d)$ for
every $\psi \in C_c^\infty(D)$. We equip $\Hloc^{s}(D)$ with a
topology generated by the seminorms $\|\psi\cdot\|_{H^{s}(\R^d)}$, which
implies that $h_n \to h$ in $\Hloc^{s}(D)$ if and only if $\psi
h_n \to \psi h$ in $H^{s}(\R^d)$ for all $\psi \in C_c^\infty(D)$.

The following proposition provides sufficient conditions for proving
  almost sure convergence in $\Hloc^{-d-\delta}(\R^d)$.

  \begin{proposition} \label{prop::convergence_Hminusd} Let $D\subset \R^d$
    be an open set, let $\delta>0$, and suppose that $(f_n)_{n\in \N}$ is a
    sequence of random measurable functions defined on $D$.  Suppose further
    that for every compact set $K\subset D$,
\[
\int_K \E\big[|f_n(x)|^2\big]\,dx < \infty
\]
 and there exist a summable
    sequence $(\bound_n)_{n\in \N}$ of positive real numbers such that for
    all $n\in \N$, we have
\begin{equation} \label{eq::cov_decay}
\iint_{K\times K} \big| \E[(f_{n+1}(x)-f_{n}(x)) (f_{n+1}(y)-f_n(y))] \big|
\,dx\,dy \leq \bound_n^3\,.
\end{equation}
Then there exists $f\in \Hloc^{-d-\delta}(\R^d)$ supported on the closure
of $D$ such that $f_n \to f$ in $\Hloc^{-d-\delta}(D)$ almost surely.
\end{proposition}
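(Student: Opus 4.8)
The plan is to reduce, via a countable exhaustion of $D$, to showing that for each fixed $\psi\in C_c^\infty(D)$ the random functions $\psi f_n$ converge almost surely in the Hilbert space $H^{-d-\delta}(\R^d)$, and to obtain that from a Borel--Cantelli-type argument after bounding $\E\big[\|\psi(f_{n+1}-f_n)\|_{H^{-d-\delta}(\R^d)}^2\big]$ by the covariance integral appearing in \eqref{eq::cov_decay}. To this end, fix $\psi\in C_c^\infty(D)$, put $K=\supp\psi$, and let $(\bound_n)$ be the summable sequence supplied by the hypothesis for this compact set. The first hypothesis forces each $f_n$ to lie in $L^2_{\loc}(D)$ almost surely, so $g_n\colonequals\psi\,(f_{n+1}-f_n)$ is almost surely a well-defined element of $L^2(\R^d)\subset H^{-d-\delta}(\R^d)$ supported in $K$.

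For the key estimate, recall from \eqref{eq::defHs}--\eqref{eq::ft} that $\|g\|_{H^{-d-\delta}(\R^d)}^2=\int_{\R^d}\ang{\xi}^{-2(d+\delta)}|\wh g(\xi)|^2\,d\xi$ whenever $g\in L^2(\R^d)$, hence by Tonelli
\[
\E\big[\|g_n\|_{H^{-d-\delta}(\R^d)}^2\big]=\int_{\R^d}\ang{\xi}^{-2(d+\delta)}\,\E\big[|\wh{g_n}(\xi)|^2\big]\,d\xi\,.
\]
Since $g_n$ is real-valued, $\E\big[|\wh{g_n}(\xi)|^2\big]=\iint\E[g_n(x)g_n(y)]\,e^{-2\pi i(x-y)\cdot\xi}\,dx\,dy$ (the interchange of expectation and integration being justified by $\int_K\E[|f_n|^2]<\infty$), so uniformly in $\xi$,
\begin{align*}
\E\big[|\wh{g_n}(\xi)|^2\big] &\le \iint|\E[g_n(x)g_n(y)]|\,dx\,dy\\
&\le \|\psi\|_\infty^2\iint_{K\times K}\big|\E[(f_{n+1}(x)-f_n(x))(f_{n+1}(y)-f_n(y))]\big|\,dx\,dy\\
&\le \|\psi\|_\infty^2\,\bound_n^3\,.
\end{align*}
Since $2(d+\delta)>d$, the weight $\ang{\xi}^{-2(d+\delta)}$ is integrable on $\R^d$, so combining the last two displays gives $\E\big[\|g_n\|_{H^{-d-\delta}(\R^d)}^2\big]\le C_\psi\,\bound_n^3$ with $C_\psi\colonequals\|\psi\|_\infty^2\int_{\R^d}\ang{\xi}^{-2(d+\delta)}\,d\xi<\infty$.

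By Cauchy--Schwarz, $\E\big[\|g_n\|_{H^{-d-\delta}(\R^d)}\big]\le C_\psi^{1/2}\bound_n^{3/2}$. Since $(\bound_n)$ is summable it tends to $0$, so $\bound_n^{3/2}\le\bound_n$ for all large $n$ and $\sum_n\bound_n^{3/2}<\infty$; therefore $\E\big[\sum_n\|g_n\|_{H^{-d-\delta}(\R^d)}\big]<\infty$, and hence $\sum_n\|g_n\|_{H^{-d-\delta}(\R^d)}<\infty$ almost surely. On this almost sure event the identity $\psi f_n=\psi f_1+\sum_{m=1}^{n-1}g_m$ exhibits $\psi f_n$ as an absolutely convergent series in $H^{-d-\delta}(\R^d)$, with some limit $F_\psi$, necessarily supported in $K$.

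To produce a single limit, fix compacts $K_1\subset\interior K_2\subset K_2\subset\cdots$ exhausting $D$ together with $\psi_j\in C_c^\infty(D)$ satisfying $\psi_j\equiv1$ on $K_j$ and $\supp\psi_j\subset\interior K_{j+1}$, so that $\psi_j\psi_{j+1}=\psi_j$ and $\bigcup_j\{\psi_j\equiv1\}=D$. On the intersection of the countably many almost sure events from the previous step, $\psi_j f_n\to F_{\psi_j}$ in $H^{-d-\delta}(\R^d)$ for all $j$; letting $n\to\infty$ in $\psi_j f_n=\psi_j(\psi_{j+1}f_n)$, and using that multiplication by $\psi_j$ is continuous on $H^{-d-\delta}(\R^d)$ (cf.\ \cite[Lemma~4.3.16]{ban-crainic}), we get $F_{\psi_j}=\psi_j F_{\psi_{j+1}}$; thus the $F_{\psi_j}$ are consistent and define an $f\in\Hloc^{-d-\delta}(D)$ with $\psi_j f=F_{\psi_j}$. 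For arbitrary $\psi\in C_c^\infty(D)$ one has $\supp\psi\subset K_j$, hence $\psi\psi_j=\psi$, for all large $j$, so $\psi f_n=\psi(\psi_j f_n)\to\psi F_{\psi_j}=\psi f$ in $H^{-d-\delta}(\R^d)$; this is exactly the convergence $f_n\to f$ in $\Hloc^{-d-\delta}(D)$. Since the $f_n$, extended by zero, vanish off $D$ and each $F_{\psi_j}$ is supported in $D$, the limit $f$ is supported on $\overline D$ and so defines an element of $\Hloc^{-d-\delta}(\R^d)$; its measurability as an $\Hloc^{-d-\delta}$-valued random variable is automatic, being an almost sure limit of measurable maps. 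I expect the only substantive step to be the displays bounding the expected negative-order Sobolev norm by the covariance integral: that is where the cube in \eqref{eq::cov_decay} is essential --- it yields the summable exponent $3/2$ after Cauchy--Schwarz --- and where the index $-d-\delta$ is used, through integrability of $\ang{\xi}^{-2(d+\delta)}$; everything else is routine Fubini and exhaustion bookkeeping.
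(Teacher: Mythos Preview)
Your proof is correct and takes a genuinely different route from the paper's. The paper rescales so that $\supp\psi\subset(-\pi,\pi)^d$, works with Fourier \emph{series} on $\T^d$, applies Markov's inequality to each coefficient to get $\P\big[|\widehat{\psi f_{n+1}-\psi f_n}(k)|\ge a_n\ang{k}^{(d+\delta)/2}\big]\le\|\psi\|_\infty^2\,a_n\ang{k}^{-d-\delta}$, and then invokes Borel--Cantelli over the doubly indexed family $(n,k)$; this yields, for $n$ beyond a random threshold, the deterministic-looking bound $\|\psi(f_{n+1}-f_n)\|_{H^{-d-\delta}}\le C a_n$, whence the sequence is Cauchy. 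Your argument instead stays on $\R^d$, integrates the uniform bound on $\E|\widehat{g_n}(\xi)|^2$ against the integrable weight $\ang{\xi}^{-2(d+\delta)}$ to control $\E\big[\|g_n\|_{H^{-d-\delta}}^2\big]$ directly, and then uses the $L^1$ trick $\E\big[\sum_n\|g_n\|\big]<\infty$ (via $\sum a_n^{3/2}<\infty$) to get almost sure absolute convergence of the telescoping series. This is shorter and avoids both the torus detour and the mode-by-mode Borel--Cantelli; the paper's route, on the other hand, gives the slightly sharper eventual bound $\|g_n\|\lesssim a_n$ rather than summability of $\|g_n\|$ in expectation. Your exhaustion-and-consistency argument is essentially a reproof of the paper's Lemma~\ref{lem::Hsloc}, and the minor point about upgrading $f$ from $\Hloc^{-d-\delta}(D)$ to an element of $\Hloc^{-d-\delta}(\R^d)$ supported on $\overline D$ is glossed over in the paper as well.
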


Before proving Proposition~\ref{prop::convergence_Hminusd}, we prove the
following lemma. Recall that a sequence $(K_n)_{n\in \N}$ of compact sets
is called a \textit{compact exhaustion\/} of $D$ if $K_n\subset K_{n+1}
\subset D$ for all $n\in \N$ and $D=\bigcup_{n\in \N} K_n$.

\begin{lemma} \label{lem::Hsloc} Let $s>0$, let $D\subset \R^d$ be an open
  set, suppose that $(K_j)_{j\in \N}$ is a compact exhaustion of $D$,
  and let $(f_n)_{n \in \N}$ be a sequence of elements of
  $H^{-s}(\R^d)$. Suppose further that $(\psi_j)_{j\in \N}$ satisfies
  $\psi_j \in C_c^\infty(D)$ and $\left.\psi_j\right|_{K_j}=1$ for all
  $j\in \N$. If for every $j$ there exists $f^{\psi_j} \in H^{-s}(\R^d)$
  such that $\psi_j f_n \to f^{\psi_j}$ as $n\to\infty$ in $H^{-s}(\R^d)$,
  then there exists $f\in \Hloc^{-s}(D)$ such that $f_n \to f$ in
  $\Hloc^{-s}(D)$.
\end{lemma}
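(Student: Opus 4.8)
The plan is to glue the limits $f^{\psi_j}$ along the exhaustion to define $f$, and then verify the two conclusions ($f\in\Hloc^{-s}(D)$ and $f_n\to f$) by the standard device of multiplying by a cutoff that is identically $1$ on the support of whichever test function is in play. Throughout I use the elementary fact that, $(K_j)$ being a compact exhaustion, every compact subset of $D$ lies in $K_j$ for $j$ large, and hence (since $\psi_j\equiv 1$ on $K_j$) that for every $\phi\in C_c^\infty(D)$ there is a $j_0$ with $\psi_j\phi=\phi$ for all $j\geq j_0$.

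First I would define $f$. For $\phi\in C_c^\infty(D)$ and $j\geq j_0$ as above, $\ang{\psi_j f_n,\phi}=\ang{f_n,\psi_j\phi}=\ang{f_n,\phi}$ for every $n$, so the assumed convergence $\psi_j f_n\to f^{\psi_j}$ in $H^{-s}(\R^d)$ forces $\ang{f_n,\phi}\to\ang{f^{\psi_j},\phi}$; in particular $\lim_n\ang{f_n,\phi}$ exists and does not depend on the choice of $j\geq j_0$. Set $\ang{f,\phi}\colonequals\lim_n\ang{f_n,\phi}$ for $\phi\in C_c^\infty(D)$; this is linear in $\phi$, and since any $C_c^\infty(D)$-convergent sequence (supported in a common compact $L\subset K_j$) also converges in $\Sc(\R^d)$ while $f^{\psi_j}\in\Sc'(\R^d)$, the functional $f$ is continuous on $C_c^\infty(D)$, i.e.\ $f\in C_c^\infty(D)'$.

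Next I would show $f\in\Hloc^{-s}(D)$, i.e.\ $\psi f\in H^{-s}(\R^d)$ for each $\psi\in C_c^\infty(D)$. Choose $j$ with $\psi_j\equiv1$ on $\supp\psi$, so $\psi_j\psi=\psi$. For any $\phi\in C_c^\infty(\R^d)$ the function $\psi\phi$ lies in $C_c^\infty(D)$ and is supported where $\psi_j=1$, whence $\ang{\psi f,\phi}=\ang{f,\psi\phi}=\ang{f^{\psi_j},\psi\phi}=\ang{\psi f^{\psi_j},\phi}$; thus $\psi f=\psi f^{\psi_j}$. Since $f^{\psi_j}\in H^{-s}(\R^d)$, the multiplication property recalled earlier (from \cite[Lemma~4.3.16]{ban-crainic}) gives $\psi f^{\psi_j}\in H^{-s}(\R^d)$, hence $\psi f\in H^{-s}(\R^d)$. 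For the convergence, with the same $j$ one checks directly from the definitions that $\psi f_n=\psi(\psi_j f_n)$ and $\psi f=\psi f^{\psi_j}$; since multiplication by the fixed function $\psi\in C_c^\infty(\R^d)$ is a \emph{bounded} linear operator on $H^{-s}(\R^d)$, the hypothesis $\psi_j f_n\to f^{\psi_j}$ in $H^{-s}(\R^d)$ propagates to $\psi(\psi_j f_n)\to\psi f^{\psi_j}$, i.e.\ $\psi f_n\to\psi f$ in $H^{-s}(\R^d)$. As $\psi\in C_c^\infty(D)$ was arbitrary, $f_n\to f$ in $\Hloc^{-s}(D)$.

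The only step that is more than bookkeeping with the cutoffs $\psi_j$ and the exhaustion is the boundedness of $g\mapsto\psi g$ on $H^{-s}(\R^d)$, which is what actually drives the final convergence display: the cited lemma shows this map sends $H^{-s}(\R^d)$ into itself, and its continuity then follows either from a direct estimate on the Fourier side or from the closed graph theorem. Everything else reduces to the observation that a test function (or its product with a fixed cutoff) is always supported in the region where some $\psi_j$ equals $1$.
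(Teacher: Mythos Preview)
Your proof is correct and follows essentially the same strategy as the paper's: pick $j$ so that $\psi_j\equiv 1$ on the support of whatever test function or cutoff is in play, and reduce everything to the assumed convergence $\psi_j f_n\to f^{\psi_j}$. The only noteworthy difference is in how the convergence $\psi f_n\to\psi f$ is obtained for an arbitrary $\psi\in C_c^\infty(D)$. The paper argues via duality that $(\psi f_n)_n$ is Cauchy in $H^{-s}(\R^d)$ (writing $\ang{\psi(f_n-f_m),g}=\ang{\psi_j(f_n-f_m),\psi g}$ and taking a supremum over $\|g\|_{H^s}\leq 1$), then invokes completeness to produce a limit $f^\psi$ and defines $f$ from these. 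You instead invoke directly that multiplication by $\psi$ is a bounded operator on $H^{-s}(\R^d)$ and push the convergence $\psi_j f_n\to f^{\psi_j}$ through it. These are equivalent: the paper's supremum step tacitly uses that $g\mapsto\psi g$ is bounded on $H^s$, which is the dual statement to what you cite. Your route is slightly more streamlined once one is willing to quote boundedness of the multiplication operator, and your explicit verification that $f$ is a continuous functional on $C_c^\infty(D)$ fills in a step the paper leaves implicit.
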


\begin{proof}
  We claim that for all $\psi \in C_c^\infty(D)$, the sequence $\psi f_n$
  is Cauchy in $H^{-s}(\R^d)$. We choose $j$ large enough that $\supp\psi\subset K_j$.
  For all $g\in H^s(\R^d)$,
  \[
  |\ang{\psi f_n,g} - \ang{\psi f_m,g}| = |\ang{\psi_j(f_n-f_m),\psi g}|\,.
  \]
  By hypothesis $\psi_j f_n$ converges in $H^{-s}(\R^d)$ as $n\to\infty$,
  so we may take the supremum over $\{g\,:\,\|g\|_{H^s(\R^d)}\leq 1\}$ of
  both sides to conclude $\|\psi f_n-\psi f_m\|_{H^{-s}(\R^d)}\to 0$ as
  $\min(m,n)\to\infty$. Since $H^{-s}(\R^d)$ is complete, it follows that
  for every $\psi \in C_c^\infty(D)$, there exists $f^\psi \in
  H^{-s}(\R^d)$ such that $\psi f_n \to f^\psi$ in $H^{-s}(\R^d)$.

  We define a linear functional $f$ on $C_c^\infty(D)$ as follows.
  For $g\in C_c^\infty(D)$, set
  \begin{equation} \label{eq::hdef}
  \ang{f,g} \colonequals \ang{f^{\psi},g}\,,
  \end{equation}
  where $\psi$ is a smooth compactly supported function which is
  identically equal to 1 on the support of $g$. To see that this definition
  does not depend on the choice of $\psi$, suppose that $\psi_1\in
  C_c^\infty(D)$ and $\psi_2\in C_c^\infty(D)$ are both equal to 1 on the
  support of $g$. Then we have
  \[
  \ang{f^{\psi_1},g} - \ang{f^{\psi_2},g} = \lim_{n\to\infty}\ang{(\psi_1-\psi_2)f_n,g}
  = 0\,,
  \]
  as desired. From the definition in \eqref{eq::hdef}, $f$ inherits
  linearity from $f^\psi$ and thus defines a linear functional on
  $C_c^\infty(D)$.  Furthermore, $f\in \Hloc^{-s}(D)$ since $\psi f =
  f^\psi \in H^{-s}(R^d)$ for all $\psi\in C_c^\infty(D)$. Finally, $f_n \to
  f$ in $\Hloc^{-s}(D)$ since $\psi f_n \to \psi f=f^\psi$ in
  $H^{-s}(\R^d)$.
\end{proof}

\begin{proof}[Proof of Proposition~\ref{prop::convergence_Hminusd}]
  Fix $\psi\in C_c^\infty(D)$. Let $D_\psi$ be a bounded open set
  containing the support $\psi$ and whose closure is contained in
  $D$. Since $D_\psi$ is bounded, we may scale and translate it so that it
  is contained in $(-\pi,\pi)^d$.  We will calculate the Fourier
  coefficients of $\psi (f_{n+1}-f_n)$ in $(-\pi,\pi)^d$, identifying it with
  $\T^d$. By Fubini's theorem, we have for all $k\in \Z^d$
\begin{align}
\label{eqn::diff_bound}
\E|&\widehat{\psi f_{n+1}-\psi f_n}(k)|^2  \\ \nonumber &= \E \left[\left( \int_D
    f_{n+1}(x)\psi(x) e^{-ik\cdot x} dx - \int_D f_{n}(x)\psi(x)e^{-ik\cdot x} dx
  \right)^2\right] \\ \nonumber &\leq \|\psi \|^2_{L^\infty(\R^d)}
\iint\limits_{D_\psi\times D_\psi} \big|\E[ (f_{n+1}(x) - f_n(x))(f_{n+1}(y) -
f_n(y))]\big| \, dx \,dy \\ \nonumber
&\leq \,\|\psi\|^2_{L^\infty(\R^d)}\,\bound_n^3\,,
\end{align}
by \eqref{eq::cov_decay}.  By Markov's inequality,
\eqref{eqn::diff_bound} implies
\[
\P\left[|\widehat{\psi f_{n+1}-\psi f_{n}}(k)| \geq \bound_n
  \ang{k}^{d/2+\delta/2}\right]
\leq \|\psi\|^2_{L^\infty(\R^d)}\,\ang{k}^{-d-\delta}\bound_n\,.
\]
The right-hand side is summable in $k$ and $n$, so by the Borel-Cantelli
lemma, the event on the left-hand side occurs for at most finitely many
pairs $(n,k)$, almost surely.  Therefore, for sufficiently large $n_0$,
this event does not occur for any $n\geq n_0$. For these values of $n$, we
have
\begin{align*}
\|\psi f_n - \psi f_{n+1}\|^2_{H^{-d-\delta}(\T^d)}
&= \sum_{k\in \Z^d} |\widehat{\psi f_n - \psi f_{n+1}}(k)|^2 \ang{k}^{-2(d+\delta)} \\
&\leq \sum_{k\in \Z^d} \bound_n^2\ang{k}^{d+\delta}\ang{k}^{-2d-2\delta} \\
&= O(\bound_n^2/\delta)\,,
\end{align*}
Applying the triangle inequality, we find that for $m,n\geq n_0$
\begin{equation} \label{eq::cauchy}
\| \psi f_m - \psi f_n \|_{H^{-d-\delta}(\T^d)} =O\left(\delta^{-1/2}\sum_{j=m}^{n-1}\bound_j\right)\,.
\end{equation}
Recall that the $H^{-d-\delta}(\T^d)$ and $H^{-d-\delta}(\R^d)$ norms are
equivalent for functions supported in $(-\pi,\pi)^d$ (see the discussion
following \eqref{eq::fc}).  The sequence $(\bound_n)_{n\in \N}$ is summable
by hypothesis, so \eqref{eq::cauchy} shows that $(\psi f_n)_{n\in \N}$ is
almost surely Cauchy in $H^{-d-\delta}(\R^d)$.  Since $H^{-d-\delta}(\R^d)$
is complete, this implies that with probability 1 there exists $h^\psi \in
H^{-d-\delta}(\R^d)$ to which $\psi f_n$ converges in the operator topology
on $H^{-d-\delta}(\R^d)$.

By assumption $f_n\in \Hloc^0(\R^d)$, so $f_n\in H^{-d-\delta}(\R^d)$.  We may then
apply Lemma~\ref{lem::Hsloc} to obtain a limiting random variable $f\in
\Hloc^{-d-\delta}(\R^d)$ to which $(f_n)_{n\in \N}$ converges in
$\Hloc^{-d-\delta}(\R^d)$.
\end{proof}

\section{\texorpdfstring{Convergence to limiting field}{Convergence to limiting field}}
\label{sec::weighted_loop_distribution}

\makeatletter{}

We have most of the ingredients in place to prove the convergence of the
centered $\eps$-nesting fields, but we need one more lemma.

\begin{lemma} \label{lem::monotone_plus}
  Fix $C>0$, $\alpha>0$, and $L\in \R$. Suppose that $F, F_1,$ and $F_2$ are
  real-valued functions on $(0,\infty)$ such that
  \begin{enumerate}[(i)]
  \item $F_1$ is nondecreasing on $(0,\infty)$,
  \item $|F_2(x+\delta)-F_2(x)|\leq C \max(\delta^\alpha,e^{-\alpha x})$ for
    all $x>0$ and $\delta>0$,
  \item $F=F_1+F_2$, and
  \item For all $\delta>0$, $F(n\delta) \to L$ as $n\to\infty$ through the
    positive integers.
  \end{enumerate}
  Then $F(x) \to L$ as $x\to\infty$.
\end{lemma}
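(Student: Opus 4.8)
The plan is to show that the hypotheses force $F$ to be asymptotically nondecreasing up to a controlled error, and then leverage the convergence along every arithmetic progression $n\delta$ to pin down the full limit. The key difficulty is that $F_1$ alone need not converge (only $F=F_1+F_2$ is controlled along arithmetic progressions), so I cannot simply argue monotone convergence of $F_1$; I must use the progressions to trap $F(x)$ between values $F(n\delta)$ and $F((n+1)\delta)$ with $\delta$ small.

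First I would fix $\eta>0$ and aim to show $|F(x)-L|\leq\eta$ for all large $x$. Choose $\delta>0$ small enough that $C\delta^\alpha<\eta/4$ (this controls the $F_2$-oscillation over any interval of length $\le\delta$, once $x$ is large enough that $Ce^{-\alpha x}<\eta/4$ too). Given such a $\delta$, hypothesis (iv) gives $N$ such that $|F(n\delta)-L|<\eta/4$ for all $n\geq N$. Now take any $x\geq N\delta$ large enough that $Ce^{-\alpha x'}<\eta/4$ for all $x'\geq x$, and let $n=\lfloor x/\delta\rfloor$, so $n\delta\leq x<(n+1)\delta$ and $n\geq N$. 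Monotonicity of $F_1$ gives $F_1(n\delta)\leq F_1(x)\leq F_1((n+1)\delta)$, hence
\[
F(x)=F_1(x)+F_2(x)\leq F_1((n+1)\delta)+F_2(x)=F((n+1)\delta)+\bigl(F_2(x)-F_2((n+1)\delta)\bigr),
\]
and similarly $F(x)\geq F(n\delta)+\bigl(F_2(x)-F_2(n\delta)\bigr)$. By hypothesis (ii), $|F_2(x)-F_2((n+1)\delta)|\leq C\max(\delta^\alpha,e^{-\alpha x})<\eta/2$ and likewise for the lower bound, so $F(n\delta)-\eta/2\leq F(x)\leq F((n+1)\delta)+\eta/2$. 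Combining with $|F((n+1)\delta)-L|,|F(n\delta)-L|<\eta/4$ yields $|F(x)-L|<\eta$.

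The only subtlety worth flagging is the order of quantifiers: $\delta$ is chosen first (depending on $\eta$), then $N$ (depending on $\delta$), then the threshold on $x$ (depending on $\eta$ and $\delta$), and the estimate on $|F_2(x)-F_2(n\delta)|$ uses that the argument separation is at most $\delta$ \emph{and} that $x$ is large, so $\max(\delta^\alpha,e^{-\alpha x})=\delta^\alpha$ (or is in any case $<\eta/2C$). Since $\eta>0$ was arbitrary, $F(x)\to L$ as $x\to\infty$. I do not expect any serious obstacle here — the argument is a clean interpolation between the arithmetic-progression control and the modulus-of-continuity bound on $F_2$, with monotonicity of $F_1$ bridging consecutive progression points.
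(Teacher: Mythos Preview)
Your proposal is correct and follows essentially the same approach as the paper: choose $\delta$ small so that $C\delta^\alpha$ is small, use (iv) to control $F$ at the grid points $n\delta$, and use monotonicity of $F_1$ together with the modulus-of-continuity bound (ii) on $F_2$ to interpolate between consecutive grid points. The paper organizes the estimate slightly differently (it first bounds $|F_1(a+\delta)-F_1(a)|$ via $|\Delta F|+|\Delta F_2|$ and then uses monotonicity, whereas you sandwich $F(x)$ directly between $F(n\delta)$ and $F((n+1)\delta)$ up to $F_2$-corrections), but the content is the same; one minor quantifier slip to tidy is that your $F_2$ lower-bound estimate is applied at $n\delta\le x$, so the threshold on $x$ should be taken large enough that $Ce^{-\alpha(x-\delta)}<\eta/4$.
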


\begin{proof}
  Let $\eps>0$, and choose $\delta>0$ so that $C\delta^\alpha < \eps$. Choose
  $x_0$ large enough that $C e^{-\alpha x_0}<\eps$ and $|F(n\delta)-L|< \eps$
  for all $n>x_0/\delta$. Fix $x>x_0$, and define $a=\delta \lfloor
  x/\delta \rfloor$. For $u\in \{F,F_1,F_2\}$, we write $\Delta
  u=u(a+\delta)-u(a)$. Observe that $|\Delta F_2|\leq \eps$ by (ii). By
  (iii) and (iv), this implies
  \[
  |\Delta F_1| = |\Delta F - \Delta F_2| \leq |\Delta F| + |\Delta F_2| < 3\eps\,.
  \] Since $F_1$ is monotone, we get $|F_1(x)-F_1(a)|< 3\eps$. Furthermore,
  (ii) implies $|F_2(x)-F_2(a)|< \eps$. It follows that
  \[
  |F(x)-L| \leq |F_1(x)-F_1(a)|+|F_2(x)-F_2(a)|+|F(a)-L| < 5\eps\,.
  \]
  Since $x>x_0$ and $\eps>0$ were arbitrary, this concludes the proof.
\end{proof}

\begin{theorem} \label{thm::almost_sure_norm} Let $h_\eps(z)$ be the
  centered weighted nesting of a $\CLE_\kappa$ around the ball $B(z,\eps)$,
  defined in \eqref{eq::h_eps}.  Suppose $0<a<1$.  Then $(h_{a^n})_{n\in
    \N}$ almost surely converges in $\Hloc^{-2-\delta}(D)$.
\end{theorem}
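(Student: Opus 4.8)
The plan is to apply Proposition~\ref{prop::convergence_Hminusd} with $d=2$ to the sequence $f_n \colonequals h_{a^n}$, which immediately gives almost sure convergence in $\Hloc^{-2-\delta}(D)$. To do this I must verify the two hypotheses of that proposition. First, for each compact $K\subset D$ I need $\int_K \E[|h_{a^n}(z)|^2]\,dz<\infty$; this follows because $h_{a^n}(z) = \SLoopcount_z(a^n)-\E[\SLoopcount_z(a^n)]$, and $\E[\SLoopcount_z(\eps)^2] = \Var[\xi]\E[\Loopcount_z(\eps)] + \E[\xi]^2\E[\Loopcount_z(\eps)^2]$, both of which are controlled on $K$ by Corollary~\ref{cor::two_point_moments} (the moments of $\Loopcount_z(\eps)$ grow like a power of $\log(\confrad(z;D)/\eps)$, which is integrable over $K$ once $a^n < \dist(K,\partial D)$; for the finitely many initial $n$ the bound is trivial since $\Loopcount_z(\eps)$ is a.s.\ finite and the relevant moment is still integrable).

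The substantive hypothesis is the covariance-decay bound \eqref{eq::cov_decay}: I need a summable sequence $(\bound_n)$ with
\[
\iint_{K\times K}\big|\E[(h_{a^{n+1}}(z)-h_{a^n}(z))(h_{a^{n+1}}(w)-h_{a^n}(w))]\big|\,dz\,dw \leq \bound_n^3.
\]
This is exactly where Theorem~\ref{thm::var-field-diff} is applied: taking $\eps_1(z) = a^{n+1}$ and $\eps_2(z) = a^n$, and $\eps = a^n$, that theorem gives the left-hand side is bounded by $C_0 (a^n)^c = C_0 (a^c)^n$ for constants $c>0$ (depending on $\kappa$) and $C_0$ (depending on $\kappa$, $D$, $K$, and the weight distribution). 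Setting $\bound_n \colonequals (C_0 (a^c)^n)^{1/3}$, this is a geometric sequence with ratio $a^{c/3}<1$, hence summable. Both hypotheses are verified, so Proposition~\ref{prop::convergence_Hminusd} yields an $\Hloc^{-2-\delta}(D)$-valued random variable $h$ with $h_{a^n}\to h$ almost surely in $\Hloc^{-2-\delta}(D)$.

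I do not expect a genuine obstacle here, since all the analytic work has been isolated in Theorem~\ref{thm::var-field-diff} and Proposition~\ref{prop::convergence_Hminusd}; the only point requiring a little care is matching the hypothesis $0<\eps_1(z)\leq\eps$ and $0<\eps_2(z)\leq\eps$ of Theorem~\ref{thm::var-field-diff} on the compact set $K$, which holds with $\eps=a^n$ since $a^{n+1}<a^n$. (One must also note the constant $c$ from Theorem~\ref{thm::var-field-diff} may be shrunk if needed so that the resulting $\bound_n$ is summable, though here it already is.) The mild subtlety worth a sentence in the writeup is that Proposition~\ref{prop::convergence_Hminusd} is stated for $D\subset\R^d$ open and a sequence of random measurable functions on $D$; since $h_{a^n}(z)$ is a genuine (a.s.\ finite, measurable) function of $z$ for each fixed $n$, this applies verbatim with $d=2$ after identifying $\C$ with $\R^2$.
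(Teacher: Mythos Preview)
Your proposal is correct and follows exactly the same approach as the paper: the paper's proof is the single sentence ``Immediate from Theorem~\ref{thm::var-field-diff} and Proposition~\ref{prop::convergence_Hminusd},'' and your writeup simply unpacks the verification of the hypotheses of Proposition~\ref{prop::convergence_Hminusd} (square-integrability on compacts via Corollary~\ref{cor::two_point_moments}, and the covariance-decay bound via Theorem~\ref{thm::var-field-diff} with $\eps=a^n$ giving a geometric and hence summable $\bound_n$).
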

\begin{proof}
Immediate from Theorem~\ref{thm::var-field-diff} and Proposition~\ref{prop::convergence_Hminusd}.
\end{proof}

\begin{proof}[Proof of Theorem~\ref{thm::existence_weighted}]
  We claim that for all $g\in C_c^\infty(D)$, we have $\ang{h_\eps,g}\to
  \ang{h,g}$ almost surely. Suppose first that the loop weights are almost
  surely nonnegative and that $g\in C_c^\infty(D)$ is a nonnegative test
  function. Define $F(x)\colonequals \ang{h_{e^{-x}},g}$, $F_1(x)\colonequals
  \ang{S_z(e^{-x}),g}$, and $F_2(x)\colonequals-\ang{\E[S_z(e^{-x})],g}$.
  We apply Lemma~\ref{lem::monotone_plus} with $\alpha$ as given in
  Lemma~\ref{lem::mean_loops_inr}, which implies
\begin{equation} \label{eq::limit_positive}
\lim_{\eps\to 0} \ang{h_\eps,g} = \ang{h,g} \quad \text{for} \quad g\in
C_c^\infty(D), g\geq 0\,.
\end{equation}

For arbitrary $g\in C_c^\infty(D)$, we choose $\tilde{g}\in C_c^\infty(D)$
so that $\tilde{g}$ and $g+\tilde{g}$ are both nonnegative. Applying
\eqref{eq::limit_positive} to $\tilde{g}$ and $g+\tilde{g}$, we see that
  \begin{equation} \label{eq::limit_compact}
    \lim_{\eps\to 0} \ang{h_\eps,g} = \ang{h,g} \quad \text{for} \quad g\in
    C_c^\infty(D)\,.
  \end{equation}
  Finally, consider loop weights which are not necessarily
  nonnegative. Define loop weights
  $\xi^{\pm}_\Loop=(\xi_\Loop)^{\pm}$, where $x^+=\max(0,x)$ and
  $x^-=\max(0,-x)$ denote the positive and negative parts of $x\in
  \R$. Define $h^{\pm}$ to be the weighted nesting fields associated with
  the weights $\xi^{\pm}_\Loop$ (associated with the same $\CLE$). Then $\ang{h^{\pm}_{\eps},g}\to \ang{h^{\pm},g}$
  almost surely, and
  \[
  \ang{h_\eps,g} = \ang{h_\eps^+,g}-\ang{h_\eps^-,g} \to \ang{h^+,g}-\ang{h^-,g} = \ang{h,g}\,,
  \]
  which concludes the proof that $\ang{h_\eps,g} \to \ang{h,g}$ almost surely.

  To see that the field $h$ is measurable with respect to the
  $\sigma$-algebra $\Sigma$ generated by the $\CLE_\kappa$ and the weights
  $(\xi_\CL)_{\Loop\in\Gamma}$, note that there exists a countable
    dense subset $\mathcal{F}$ of $C_c^\infty(D)$
    \cite[Exercise~1.13.6]{tao-epsilon}. Observe that $h_{2^{-n}}$ is
  $\Sigma$-measurable and $h$ is determined by the values
  $\{h_{2^{-n}}(g)\,:n\in \N,g\in \mathcal{F}\}$. Since $h$ is an almost
  sure limit of $h_{2^{-n}}$, we conclude that $h$ is also $\Sigma$-measurable.

  To establish conformal invariance, let $z\in D$ and $\eps>0$ and define
  the sets of loops
\begin{align*}
  \Xi_1 &= \text{loops surrounding }B(\varphi(z),\eps |\varphi'(z)|),\text{ and} \\
  \Xi_2 &= \text{loops surrounding }\varphi(B(z,\eps)) \\
  \Xi_3 &= \Xi_1 \Delta\,\Xi_2\,,
\end{align*}
where $\Delta$ denotes the symmetric difference of two sets.
Since either $\Xi_1\subset\Xi_2$ or $\Xi_2\subset\Xi_1$,
\begin{equation*}
h_\eps(z) - \acute{h}_{\eps|\varphi'(z)|}(\varphi(z))
 = \pm\sum_{\xi \in \Xi_3} \xi_\CL\,.
\end{equation*}
Multiplying by $g\in C_c^\infty(D)$, integrating over $D$, and taking $\eps
\to 0$, we see that by Lemma~\ref{lem::mean_loops_lcr} and the
  finiteness of $\E[|\xi_\CL|]$, the sum on the right-hand side goes to 0
in $L^1$ and hence in probability as $\eps \to 0$. Furthermore, we claim
that
\begin{equation*}
  \int_D \left[\acute{h}_{\eps|\varphi'(z)|}(\varphi(z)) -
  \acute{h}_{\eps}(\varphi(z))\right]g(z)\,dz \to 0
\end{equation*}
in probability as $\eps \to 0$. To see this, we write the difference in
square brackets as
\[
\acute{h}_{\eps|\varphi'(z)|}(\varphi(z)) - \acute{h}_{C\eps}(\varphi(z)) +
\acute{h}_{C\eps}(\varphi(z)) - \acute{h}_{\eps}(\varphi(z)),
\]
where $C$ is an upper bound for $|\varphi'(z)|$ as $z$ ranges over the
support of $g$.  Note that $\int_D
\left[\acute{h}_{C\eps}(\varphi(z)) -
  \acute{h}_{\eps}(\varphi(z))\right]g(z)\,dz \to 0$ in probability because
for all $0<\eps'<\eps$ and $\psi \in C_c^\infty(D)$, we have
\begin{align*}
\E&\|\psi h_\eps - \psi h_{\eps'}\|^2_{H^{-d-\delta}(\T^d)}
= \sum_{k\in \Z^d} \E|\widehat{\psi h_\eps - \psi h_{\eps'}}(k)|^2
\ang{k}^{-2(d+\delta)} \\
&\leq \sum_{k\in \Z^d} \|\psi \|^2_{L^\infty(\R^d)}
\iint_{D_\psi^2} \big|\E[ (h_\eps(x) - h_{\eps'}(x))(h_\eps(y) -
h_{\eps'}(y))]\big| \, dx \,dy \ang{k}^{-2(d+\delta)} \\
&\leq \eps^{\Omega(1)}/\delta;
\end{align*}
see \eqref{eqn::diff_bound} for more details. The same calculation along
with Theorem~\ref{thm::var-field-diff} show that
\enlargethispage{6pt}
\[
\int_D
\left[\acute{h}_{C\eps}(\varphi(z)) -
  \acute{h}_{\eps|\varphi'(z)|}(\varphi(z))\right]g(z)\,dz \to 0,
\]
in probability. It follows that $\ang{h,g} = \ang{\acute{h}\circ\varphi,
  g}$ for all $g\in C_c^\infty(D)$, as desired.
\end{proof}

\section{Step nesting} \label{sec::step}
\makeatletter{}In this section we prove Theorem~\ref{thm::existence_weighted_alt}.
Suppose that $D$ is a proper simply connected domain, and let $\Gamma$ be a
$\CLE_\kappa$ in $D$. Let $\mu$ be a probability measure with finite second
moment and zero mean, and define
\[
\mathfrak{h}_n(z) = \sum_{k=1}^n \xi_{\Loop_k(z)}, \quad n\in \N\,.
\]
We call $(\mathfrak{h}_n)_{n\in \N}$ the \textit{step nesting sequence\/}
associated with $\Gamma$ and $(\xi_\Loop)_{\Loop\in\Gamma}$.

\begin{lemma}\label{lem::Nzw}
  For each $\kappa\in(8/3,8)$ there are positive constants $c_1$, $c_2$, and $c_3$
  (depending on $\kappa$) such that for any simply connected proper
  domain $D\subsetneq\C$ and points $z,w\in D$, for a $\CLE_\kappa$ in $D$,
  \[\Pr\left[\Loopcount_{z,w}\geq c_1 \log \frac{\confrad(z;D)}{|z-w|}+c_2 j+c_3\right]\leq\exp[-j]\,.\]
\end{lemma}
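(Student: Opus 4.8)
Write $x=\log\frac{\confrad(z;D)}{|z-w|}$. The key structural fact is that $\Loopcount_{z,w}=S_{z,w}-1$, where $S_{z,w}=\min\{k\ge 1:w\notin U_z^k\}$ is the first-separation index: every loop of $\Gamma$ surrounding $z$ is some $\Loop_z^k$, and it surrounds $w$ exactly when $w\in U_z^k$, i.e.\ when $k<S_{z,w}$. So it suffices to bound the upper tail of $S_{z,w}$. The plan is to sandwich $S_{z,w}$ between a hitting time of the conformal-radius random walk of $\Gamma$ at $z$ and the containment index $J^{\subset}_{z,|z-w|}$ of \eqref{eq::Jcapsubset}, and then to apply Corollaries~\ref{cor::onefourth} and~\ref{cor::loop_contain_stoch_dom} together with an elementary large-deviations estimate.

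For the upper half of the sandwich: if $\Loop_z^j\subseteq B(z,|z-w|)$ then its bounded complementary component $U_z^j$ also lies in $B(z,|z-w|)$ (a point outside the open ball has winding number $0$ about any curve contained in the ball), and since $w\notin B(z,|z-w|)$ this forces $w\notin U_z^j$; hence $S_{z,w}\le J^{\subset}_{z,|z-w|}$. For the lower half, introduce $S_k=\log\confrad(z;D)-\log\confrad(z;U_z^k)=\sum_{i=1}^k T_i$, whose increments $T_i>0$ are i.i.d.\ with $\E[T_1]=1/\lptyp\in(0,\infty)$ (the renewal structure already used in the proof of Theorem~\ref{thm::surround_two_point_bound}), and set $\tau_t=\inf\{n\ge 0:S_n\ge t\}$. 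If $\confrad(z;U_z^j)<|z-w|$ then Corollary~\ref{cor::onefourth} gives $\dist(z,\Loop_z^j)=\inrad(z;U_z^j)\le\confrad(z;U_z^j)<|z-w|$, so $\Loop_z^j$ meets $B(z,|z-w|)$; consequently $J^{\cap}_{z,|z-w|}\le\min\{j\ge 1:S_j>x\}\le\tau_{x^+}+1$, with $x^+=\max(x,0)$, the last step using $T_i>0$. Finally Corollary~\ref{cor::loop_contain_stoch_dom} says $J^{\subset}_{z,|z-w|}-J^{\cap}_{z,|z-w|}$ is stochastically dominated by $2\wt{N}$ with $\wt{N}$ geometric of parameter $p(\kappa)$. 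Assembling these,
\[
\Loopcount_{z,w}=S_{z,w}-1\ \le\ J^{\subset}_{z,|z-w|}-1\ \le\ \tau_{x^+}+\bigl(J^{\subset}_{z,|z-w|}-J^{\cap}_{z,|z-w|}\bigr).
\]

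It remains to take tails and add the two thresholds. A Chernoff bound applied to $-T_1$ — for which only the trivial inequality $\E[e^{-T_1}]<1$ is needed, not a positive exponential moment of $T_1$ — gives $\P[\tau_{x^+}\ge n]=\P[S_{n-1}<x^+]\le e^{x^+}(\E[e^{-T_1}])^{n-1}=e^{x^+}e^{-\beta(n-1)}$ with $\beta=-\log\E[e^{-T_1}]\in(0,\infty)$, while $\P[2\wt{N}\ge n]\le e^{-\kappa_0(n/2-1)}$ with $\kappa_0=-\log(1-p)>0$. Choosing $n$ linear in $x^+$ and $j$ with $\kappa$-dependent coefficients so that each of these probabilities is at most $\tfrac12 e^{-j}$, a union bound applied to the displayed inequality yields $\P[\Loopcount_{z,w}\ge c_1 x^+ + c_2 j + c_3]\le e^{-j}$ for suitable positive $c_i=c_i(\kappa)$. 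When $\confrad(z;D)\ge|z-w|$ we have $x^+=x$, which is the asserted bound; when $\confrad(z;D)<|z-w|$ the chain above already forces $\confrad(z;U_z^1)<|z-w|$, hence $J^{\cap}_{z,|z-w|}=1$ and $\Loopcount_{z,w}\le 2\wt{N}$, so the bound holds with $x^+=0$ in place of the negative quantity $x$.

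The one thing that needs care is the necessity of routing through $J^{\subset}_{z,|z-w|}$ rather than comparing ``$\Loop_z^j$ surrounds $w$'' directly with the conformal-radius walk: a loop enclosing both $z$ and $w$ may come arbitrarily close to $z$ while still surrounding $w$, so its conformal radius from $z$ carries no lower bound in terms of $|z-w|$. Passing instead through the containment time $J^{\subset}_{z,|z-w|}$, which Corollary~\ref{cor::loop_contain_stoch_dom} controls in terms of the intersection time $J^{\cap}_{z,|z-w|}$, resolves this; once the sandwich is in place the rest is the routine lower-tail large deviations of a sum of i.i.d.\ positive increments.
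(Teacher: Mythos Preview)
Your proof is correct and follows essentially the same route as the paper's: a Chernoff bound at $\lambda=-1$ on the i.i.d.\ log-conformal-radius increments to control the hitting time and hence $J^{\cap}_{z,|z-w|}$, the inequality $\Loopcount_{z,w}<J^{\subset}_{z,|z-w|}$, and Corollary~\ref{cor::loop_contain_stoch_dom} for the geometric tail of $J^{\subset}-J^{\cap}$. Your version is simply more explicit than the paper's compressed argument --- you spell out the first-separation index $S_{z,w}$, the Koebe step linking $S_j>x$ to $J^{\cap}\le j$, the case $x<0$, and the reason one must route through $J^{\subset}$ rather than compare conformal radii directly --- but the skeleton is identical.
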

\begin{proof}
  Let $X_i$ be i.i.d.\ copies of the log conformal radius distribution,
  and let $T_\ell=\sum_{i=1}^\ell X_i$.  Then
\begin{align*}
\Pr[T_\ell\leq t] &\leq \E[e^{-X}]^\ell e^{t} \\
\Pr[T_\ell\leq \log(\confrad(z;D)/|z-w|)]
 &\leq \E[e^{-X}]^\ell \frac{\confrad(z;D)}{|z-w|}\,.
\end{align*}
If $T_\ell > \log(\confrad(z;D)/|z-w|)$, then $J_{z,|z-w|}^\cap\leq\ell$.
But $\Loopcount_{z,w}< J_{z,|z-w|}^\subset$,
and by Corollary~\ref{cor::loop_contain_stoch_dom},
$J_{z,|z-w|}^\subset-J_{z,|z-w|}^\cap$ has exponential tails.
\end{proof}

\begin{proof}[Proof of Theorem~\ref{thm::existence_weighted_alt}]
  We check that \eqref{eq::cov_decay} holds with $f_n = \mathfrak{h}_n$.
  Writing out each difference as a sum of loop weights and using the
  linearity of expectation, we calculate for $0\leq m\leq n$ and $z,w\in
  D$,
  \begin{align*}
    \E[(\mathfrak{h}_m(z)-\mathfrak{h}_n(z))(\mathfrak{h}_m(w)-\mathfrak{h}_n(w))]
&= \sigma^2 \sum_{k=m+1}^n \P[\Loopcount_{z,w}\geq k]\,.
  \end{align*}

  Let $\delta(z)$ be the value for which
  $c_1\log(\confrad(z;D)/\delta(z))+c_3=k$, where $c_1$ and $c_3$ are as in
  Lemma~\ref{lem::Nzw}.  Let $K$ be compact, and let $\delta=\max_{z\in
    K}\delta(z)$.  Then
  \begin{equation} \label{eq::delta-k-K-D}
    \delta \leq \exp[-\Theta(k)] \times \sup_{z\in K} \dist(z,\partial D)
  \end{equation}
  and
  \begin{equation} \label{eq::KtimesK}
    \iint\limits_{\substack{K\times K\\|z-w|\geq\delta}} \Pr[\Loopcount_{z,w}\geq k] \,dz\,dw
    \leq \exp(-k) \times\area(K)^2\,.
  \end{equation}
  The integral of $\P[\Loopcount_{z,w}\geq k]$ over $z,w$ which are
  closer than $\delta$ is controlled by virtue of the small volume of the domain of
  integration:
  \begin{equation} \label{eq::Delta_n}
    \iint\limits_{\substack{K\times K\\|z-w|\leq \delta}}\P[\Loopcount_{z,w}\geq k]
    \,dz\,dw \leq \delta^2\times\area(K)\,.
  \end{equation}
  Putting together \eqref{eq::delta-k-K-D}, \eqref{eq::KtimesK} and \eqref{eq::Delta_n} establishes
  \begin{align} \label{eq::twoterms}
    \iint\limits_{K\times K}\P\left[\Loopcount_{z,w}\geq k\right]\,dz\,dw \leq
    \exp[-\Theta(k)]\times C_{K,D}
  \end{align}
  as $k\to\infty$.

  Having proved \eqref{eq::twoterms}, we may appeal to
  Proposition~\ref{prop::convergence_Hminusd} and conclude that
  $\mathfrak{h}_n$ converges almost surely to a limiting random variable
  $\mathfrak{h}$ taking values in $\Hloc^{-2-\delta}(D)$.

  Since each $\mathfrak{h}_n$ is determined by $\Gamma$ and
  $(\xi_\Loop)_{\Loop\in \Gamma}$, the same is true of
  $\mathfrak{h}$. Similarly, for each $n\in \N$, $\mathfrak{h}_n$ inherits
  conformal invariance from the underlying $\CLE_\kappa$. It follows that
  $\mathfrak{h}$ is conformally invariant as well.
\end{proof}

The following proposition shows that if the weight distribution $\mu$ has
zero mean, then the step nesting field $\mathfrak{h}$ and the usual nesting
field $h$ are equal.

\begin{proposition} \label{prop::fieldsequal} Suppose that $D \subsetneq
  \C$ is a simply connected domain, and let $\mu$ be a probability measure
  with finite second moment and zero mean. Let $\Gamma$ be a $\CLE_\kappa$
  in $D$, and let $(\xi_\Loop)_{\Loop\in \Gamma}$ be an
  i.i.d.\ sequence of $\mu$-distributed random variables. The weighted
  nesting field $h=h(\Gamma,(\xi_\Loop)_{\Loop\in \Gamma})$ from
  Theorem~\ref{thm::existence_weighted} and the step nesting field
  $\mathfrak{h}=\mathfrak{h}(\Gamma,(\xi_\Loop)_{\Loop\in \Gamma})$ from Theorems
  \ref{thm::existence_weighted} and \ref{thm::existence_weighted_alt} are
  almost surely equal.
\end{proposition}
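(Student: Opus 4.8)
Here is the plan. Since $\mu$ has zero mean, $\E[\SLoopcount_z(\eps)]=0$, so the centered $\eps$-ball field is simply $h_\eps(z)=\SLoopcount_z(\eps)=\sum_{k=1}^{\Loopcount_z(\eps)}\xi_{\Loop_z^k}=\mathfrak h_{\Loopcount_z(\eps)}(z)$; that is, the $\eps$-ball nesting field is the step nesting sequence read off at the random, $z$-dependent index $\Loopcount_z(\eps)$ (the number of loops surrounding $B(z,\eps)$). Fix a countable dense family $\mathcal F\subset C_c^\infty(D)$ of real-valued test functions; since $h$ and $\mathfrak h$ are continuous linear functionals on $C_c^\infty(D)$, it suffices to show $\ang{h,g}=\ang{\mathfrak h,g}$ almost surely for each fixed $g\in\mathcal F$ and then intersect over $g$. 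Fix such a $g$, let $K=\supp g$, and fix $a\in(0,1)$. By Theorem~\ref{thm::existence_weighted}, $\ang{h_{a^m},g}\to\ang{h,g}$ almost surely as $m\to\infty$, and by Theorem~\ref{thm::existence_weighted_alt}, $\ang{\mathfrak h_n,g}\to\ang{\mathfrak h,g}$ almost surely as $n\to\infty$. Choosing $n=n(m)\colonequals\lfloor\tfrac12\lptyp m\log(1/a)\rfloor\to\infty$, it is therefore enough to prove that $\E[(\ang{h_{a^m},g}-\ang{\mathfrak h_{n(m)},g})^2]\to0$, since then $\ang{h_{a^m},g}-\ang{\mathfrak h_{n(m)},g}\to0$ in probability while also converging almost surely to $\ang{h,g}-\ang{\mathfrak h,g}$, forcing the latter to vanish.

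Write $\eps=a^m$, $n=n(m)$, and $D_{\eps,n}(z)\colonequals h_\eps(z)-\mathfrak h_n(z)$, which is a signed sum of the weights $\xi_{\Loop_z^k}$ over indices $k$ lying strictly between $\min(n,\Loopcount_z(\eps))$ and $\max(n,\Loopcount_z(\eps))$; let $A_z$ denote the corresponding set of loops (a $\Gamma$-measurable random set, with $|A_z|=|\Loopcount_z(\eps)-n|$). Because the weights are i.i.d.\ with mean zero and variance $\sigma^2$, conditioning on $\Gamma$ gives $|\E[D_{\eps,n}(z)D_{\eps,n}(w)\given\Gamma]|\le\sigma^2\,|A_z\cap A_w|$, whence by Fubini
\[
 \E\big[(\ang{D_{\eps,n},g})^2\big]\le\sigma^2\|g\|_{L^\infty}^2\iint_{K\times K}\E\big[|A_z\cap A_w|\big]\,dz\,dw .
\]
A loop in $A_z\cap A_w$ surrounds both $z$ and $w$; since the loops surrounding both points form an initial segment of each of the nested sequences $(\Loop_z^k)_k$ and $(\Loop_w^k)_k$, with matching indices, such a loop carries a common index $k\le\Loopcount_{z,w}$. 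Hence on the event $E_{z,w}\colonequals\{n<\min(\Loopcount_z(\eps),\Loopcount_w(\eps))\}$ we have $|A_z\cap A_w|\le\max(0,\Loopcount_{z,w}-n)$, while on $E_{z,w}^c$ we use only $|A_z\cap A_w|\le\Loopcount_{z,w}$.

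The complementary event is rare. Up to an additive $O(1)$, $\Loopcount_z(\eps)$ is the renewal count $\tau_{\log(1/\eps)+O(1)}$ of the i.i.d.\ log-conformal-radius increments of the loops surrounding $z$, so the Chernoff bound $\P[\tau_x\le k]\le e^{-\lambda x+k\Lambda_\kappa(\lambda)}$ for small $\lambda>0$, together with \eqref{eqn::contain_mean_cr_bound} and the fact that $\inrad(z;D)$ is bounded below on $K$, yields $\P[\Loopcount_z(\eps)<n]\le C\eps^{c}$ uniformly in $z\in K$ once $n\le\tfrac12\lptyp\log(1/\eps)$ and $\eps$ is small. Using $\E[\Loopcount_{z,w}^2]=O((1+\log\tfrac1{|z-w|})^2)$ (a consequence of the tail bound in Lemma~\ref{lem::Nzw}, or of Theorem~\ref{thm::surround_two_point_bound}) and Cauchy--Schwarz, the contribution of $E_{z,w}^c$ to the double integral is $O(\eps^{c/2})\to0$. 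For the main term, Lemma~\ref{lem::Nzw} gives $\P[\Loopcount_{z,w}\ge k]\le\exp(-(k-m_{z,w})/c_2)$ with $m_{z,w}=c_1\log(\confrad(z;D)/|z-w|)+c_3$, so $\E[\max(0,\Loopcount_{z,w}-n)]=\sum_{k>n}\P[\Loopcount_{z,w}\ge k]$. Splitting $K\times K$ at a threshold $\rho_n$ of order $e^{-n/(2c_1)}$ chosen so that $m_{z,w}\le n$ whenever $z,w\in K$ and $|z-w|\ge\rho_n$: on $\{|z-w|\ge\rho_n\}$ this expectation is $O(e^{-(n-m_{z,w})/c_2})$, whose integral over this region is, by an elementary estimate, $O(\max(1,n)e^{-n/c_2})$ when $c_1/c_2\le2$ and $O(e^{-2n/c_1})$ when $c_1/c_2>2$, in either case $\to0$ and uniformly in $\eps$; on $\{|z-w|<\rho_n\}$ one bounds $\E[\max(0,\Loopcount_{z,w}-n)]\le\E[\Loopcount_{z,w}]=O(\log\tfrac1{|z-w|}+1)$ and integrates against the small volume of this region to get $O(\rho_n^2(1+\log\tfrac1{\rho_n}))\to0$. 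Combining these bounds shows $\E[(\ang{D_{\eps,n},g})^2]\to0$, which completes the argument.

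I expect the main obstacle to be the double-integral estimate for $\E[|A_z\cap A_w|]$: the combinatorial identification showing that loops in $A_z\cap A_w$ surround both $z$ and $w$ (hence $|A_z\cap A_w|\le\max(0,\Loopcount_{z,w}-n)$ on the dominant event) must be made carefully, and integrating the co-nesting tail bound of Lemma~\ref{lem::Nzw} requires handling both the logarithmic singularity near the diagonal and the a priori unknown value of the ratio $c_1/c_2$. The concentration estimate $\P[\Loopcount_z(\eps)<n]=O(\eps^c)$ used to dispatch the complementary event is a routine large-deviation bound for the renewal process of log-conformal-radius increments.
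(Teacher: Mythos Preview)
Your proof is correct, but it takes a genuinely different route from the paper's.

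Both arguments rest on the same combinatorial observation: a loop contributing to the cross term in $\E[(h_\eps(z)-\mathfrak h_n(z))(h_\eps(w)-\mathfrak h_n(w))]$ must surround both $z$ and $w$, so the number of such loops is at most $\Loopcount_{z,w}$. Where the approaches diverge is in how the limits $\eps\to0$ and $n\to\infty$ are handled. The paper keeps $\eps$ and $n$ decoupled: it applies Fatou's lemma to pass from $\E[(\ang{h,g}-\ang{\mathfrak h,g})^2]$ to a $\limsup_\eps\limsup_n$ of the double integral, then invokes the reverse Fatou lemma (with the integrable dominating function $\E[\Loopcount_{z,w}]\,|g(z)g(w)|$ supplied by Theorem~\ref{thm::surround_two_point_bound}) to move these limits inside the integral, where the integrand tends to $0$ pointwise for $z\neq w$. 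This avoids any quantitative tail input beyond the first-moment bound on $\Loopcount_{z,w}$. You instead tie $n$ to $\eps$ via $n(m)\approx\tfrac12\lptyp m\log(1/a)$ and establish $L^2$ convergence directly, which forces you to import the exponential tail bound of Lemma~\ref{lem::Nzw}, a Chernoff bound for the renewal process governing $\Loopcount_z(\eps)$, and a careful case split on $c_1/c_2$ when integrating near and far from the diagonal. Your route is more laborious but delivers an explicit rate; the paper's is shorter and uses only soft dominated-convergence reasoning.
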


\begin{proof}
  Let $g\in C_c^\infty(D)$, $\eps>0$ and $n\in \N$. By Fubini's theorem, we
  have
  \begin{align}\label{eq::twofields}
    \E[(&\ang{h_\eps,g}-\ang{\mathfrak{h}_n,g})^2] \\ \nonumber &= \iint_{D\times D}
    \E[(h_\eps(z) - \mathfrak{h}_n(z))(h_\eps(w) -
    \mathfrak{h}_n(w))]\,g(z)g(w)\,dz\,dw\,.
  \end{align}
  Applying the same technique as in \eqref{eq::h_diff}, we find that the
  expectation on the right-hand side of \eqref{eq::twofields} is bounded by
  $\sigma^2$ times the expectation of the number $N_{z,w}(n,\eps)$ of loops
  $\Loop$ satisfying both of the following conditions:
  \begin{enumerate}
  \item $\Loop$ surrounds $B_z(\eps)$ or $\Loop$ is among the $n$ outermost
    loops surrounding~$z$, but not both.
  \item $\Loop$ surrounds $B_w(\eps)$ or $\Loop$ is among the $n$ outermost
    loops surrounding~$w$, but not both.
  \end{enumerate}
  Using Fatou's lemma and \eqref{eq::twofields}, we find that
\begin{align*}
  \E[(\ang{h,g}-\ang{\mathfrak{h},g})^2] &= \E\left[\lim_{\eps\to
      0}\lim_{n\to\infty} (\ang{h_\eps,g}-\ang{\mathfrak{h_n},g})^2\right] \\
  &\leq \liminf_{\eps\to 0}\liminf_{n\to\infty} \E[ (\ang{h_\eps,g}-\ang{\mathfrak{h_n},g})^2] \\
  &\leq \liminf_{\eps\to 0}\liminf_{n\to\infty} \iint_{D\times D}\E[N_{z,w}(n,\eps)]\,g(z)g(w) \,dz\,dw \\
  &\leq \limsup_{\eps\to 0}\limsup_{n\to\infty} \iint_{D\times D}\E[N_{z,w}(n,\eps)]\,g(z)g(w) \,dz\,dw\,. 
\end{align*}
If $z\neq w$, then $\Loopcount_{z,w}<\infty$ almost surely,
so $\E[N_{z,w}(n,\eps)]$ tends to 0 as $\eps \to 0$ and
$n\to\infty$. Furthermore, the observation $N_{z,w}(n,\eps) \leq
\Loopcount_{z,w}$ implies by Theorem~\ref{thm::surround_two_point_bound}
that $\E[N_{z,w}(n,\eps)]$ is bounded by $\lptyp\log|z-w|^{-1}+\const$
independently of $n$ and $\eps$.  Since
$(z,w)\mapsto\E[N_{z,w}(n,\eps)]g(z)g(w)$ is dominated by the integrable
function $(\lptyp \log|z-w|^{-1}+\const)g(w)g(w)$, we may apply the reverse
Fatou lemma to obtain
\begin{align*}
  \E[(\ang{h,g}-\ang{\mathfrak{h},g})^2] &\leq \iint_{D\times D}
  \limsup_{\eps\to
    0}\limsup_{n\to\infty} \E[N_{z,w}(n,\eps)]\,g(z)g(w) \,dz\,dw \\  &= 0\,,
\end{align*}
which implies   \begin{equation}
    \label{eq::equal_as}
    \ang{h,g}=\ang{\mathfrak{h},g}
  \end{equation}
  almost surely.  The space $C_c^\infty(\C)$ is separable
  \cite[Exercise~1.13.6]{tao-epsilon}, which implies that $\C_c^\infty(D)$
  is also separable.  To see this, consider a given countable dense subset
  of $C_c^\infty(\C)$.  Any sufficiently small neighborhood of a point in
  $C_c^\infty(D)$ is open in $C_c^\infty(\C)$, and therefore intersects the
  countable dense set.  Therefore, we may apply \eqref{eq::equal_as} to a
  countable dense subset of $C_c^\infty(D)$ to conclude that $h = \mathfrak{h}$
  almost surely.
\end{proof}

\section{Further questions}
\label{sec::questions}

\makeatletter{}\newcounter{question}
	\setcounter{question}{1}

\newcommand{\ques}{\thequestion\refstepcounter{question}}

\smallskip
$\ $\newline

\noindent{\bf Question \ques.}  Suppose that $h$ is the nesting field associated with a $\CLE_\kappa$ process and weight distribution $\mu$.  For each $\eps > 0$ and $z \in D$, let $A_z(\eps)$ be the average of $h$ on the disk $B(z,\eps)$.  Is it true that the set of extremes of $A_z(\eps)$, i.e., points where either $A_z(\eps)$ has unusually slow or fast growth as $\eps \to 0$, is the same as that for $\SLoopcount_z(\eps)$?
\smallskip

\noindent{\bf Question \ques\hypertarget{ques::deterministic}.}  When $\kappa=4$ and $\mu$ is the Bernoulli distribution, the nesting field $h$ is a GFF on $D$.  In this case, it follows from \cite{MS_CLE} that the underlying $\CLE_4$ is a deterministic function of $h$.  Does a similar statement hold for $\kappa \in (8/3,4]$?  For $\kappa \in (4,8)$, we do not expect this to hold because we do not believe that it is possible to determine the outermost loops of such a $\CLE_\kappa$ given the union of the outermost loops as a random \textit{set}.  Nevertheless, is the union of all loops, viewed as a subset of $D$ and its prime ends, determined by the (weighted) nesting field?  \smallskip

\noindent{\bf Question \ques.}  When $\kappa=4$ and $\mu$ is the Bernoulli distribution, then the nesting field is a Gaussian process (in particular, a GFF).  We do not expect this to hold with the Bernoulli distribution for any $\kappa\neq 4$.  Do there exist other values of $\kappa \in (8/3,8)$ and weight distributions $\mu$ such that the corresponding nesting field is also Gaussian?
\smallskip

\noindent{\bf Question \ques.}  Does the nesting field in general satisfy a spatial Markov property which is similar to that of the GFF?  Is there a type of Markovian characterization for the nesting field which is analogous to that for $\CLE$ \cite{SW_CLE,SHE_CLE}?
The existence of a spatial Markov property for the nesting field is natural in view of the conjectured convergence of discrete models which possess a spatial Markov property to $\CLE_\kappa$.

\noindent{\bf Question \ques.} There are several discrete loop models which are known to converge to CLE.  Do their nesting fields converge to the nesting field of CLE?

\appendix

\section{Rapid convergence to full-plane CLE} \label{sec:converge-fullplane}

In this appendix we prove that $\CLE_\kappa$ (for $8/3<\kappa<8$) in
large domains rapidly converges to a full-plane version of
$\CLE_\kappa$.  This was proved in \cite{kemppainen-werner} when
$\kappa\leq 4$ using the loop-soup characterization of CLE.

For a collection $\Gamma$ of nested noncrossing loops in $\C$, let
$\Gamma|_{B(z,r)^+}$ denote the collection of loops in $\Gamma$ which
are in the connected component of
$\C\setminus\{\CL\in\Gamma:\text{$\CL$ surrounds $B(z,r)$}\}$
containing $z$.  If $\Gamma$ is a $\CLE_\kappa$ in a proper simply
connected domain containing $B(z,r)$, then $\Gamma|_{B(z,r)^+} =
\Gamma|_{U_z^{J^\cap_{z,r}-1}}$.

\begin{theorem}
  \label{thm::full-plane}
  For $\kappa\in(8/3,8)$ there is a unique measure on nested noncrossing
  loops in $\C$, ``full-plane $\CLE_\kappa$'', to which $\CLE_\kappa$'s on
  large domains $D$ rapidly converge in the following sense.  There are
  constants $C >0$ and $\alpha>0$ (depending on $\kappa$) such that for any
  $z\in\C$, $r>0$, and simply connected proper domain~$D$ containing
  $B(z,r)$, a full-plane $\CLE_\kappa$ $\Gamma_\C$ and a $\CLE_\kappa$
  $\Gamma_D$ on $D$ can be coupled so that with probability at least $1-C
  (r/\dist(z,\partial D))^{\alpha}$, there is a conformal map $\varphi$
  from $\Gamma_\C|_{B(z,r)^+}$ to $\Gamma_D|_{B(z,r)^+}$ which has low
  distortion in the sense that $|\varphi'(z)-1| < C (r/\dist(z,\partial
  D))^{\alpha}$ on $\Gamma_\C|_{B(z,r)^+}$.  Full-plane $\CLE_\kappa$ is
  invariant under scalings, translations, and rotations.
\end{theorem}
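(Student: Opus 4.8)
The plan is to realize full-plane $\CLE_\kappa$ as the limit of $\CLE_\kappa$ along an increasing exhaustion $D_n\uparrow\C$, and to read off the quantitative coupling from a coalescence estimate for the renewal process formed by the conformal radii of the loops surrounding a fixed point. It suffices to establish the following two-domain statement: if $B(z,r)\subset D'\subset D''$ are proper simply connected domains, then $\CLE_\kappa$ on $D'$ and on $D''$ can be coupled so that, outside an event of probability $O\big((r/\dist(z,\partial D'))^{\alpha}\big)$, there is a conformal map carrying $\Gamma_{D'}|_{B(z,r)^+}$ onto $\Gamma_{D''}|_{B(z,r)^+}$, fixing $z$ and with derivative $1$ at $z$; by the Koebe distortion theorem such a map has low distortion on the scale of $B(z,r)$. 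The theorem will then follow by taking $D'' = D_n$ and $n\to\infty$.

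For the two-domain statement I would use the renewal property of $\CLE_\kappa$ viewed from $z$: conditionally on the outermost loops $\Loop_z^1,\dots,\Loop_z^k$ surrounding $z$, the restriction of $\Gamma$ to $U_z^k$ is a $\CLE_\kappa$ in $U_z^k$. Hence the data $\big(T_k,\,\text{shape of }\Loop_z^k\text{ in the uniformization of }U_z^{k-1}\big)_{k\ge1}$, with $T_k\colonequals\log\confrad(z;U_z^{k-1})-\log\confrad(z;U_z^{k})$, are i.i.d.\ in $k$. The increments $T_k$ are nonnegative, have positive mean, possess an exponential moment (finiteness of $\Lambda_\kappa(\lambda)$ for small $\lambda>0$, as in Section~\ref{sec::tail}), and have a law with a positive density on $(0,\infty)$, so Lemma~\ref{lem::coalesce} applies to the renewal processes built from the $T_k$ with starting points $-\log\confrad(z;D')$ and $-\log\confrad(z;D'')$. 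Using Corollary~\ref{cor::onefourth} to pass between conformal radius and inradius, one couples these processes so that, outside an event of probability $O\big((r/\dist(z,\partial D'))^{\alpha}\big)$, they coalesce: there are indices $k',k''$ and $\rho\ge\sqrt{r\,\dist(z,\partial D')}$ with $\confrad(z;U_z^{k'}(\Gamma_{D'}))=\confrad(z;U_z^{k''}(\Gamma_{D''}))=\rho$. On this event, and again by the renewal property, we may further require that the loops of $\Gamma_{D'}$ and of $\Gamma_{D''}$ interior to the two coalescence loops agree in their uniformizations; the resulting conformal map $\varphi\colon U_z^{k'}(\Gamma_{D'})\to U_z^{k''}(\Gamma_{D''})$ with $\varphi(z)=z$ and $\varphi'(z)>0$ then satisfies $\varphi'(z)=\rho/\rho=1$ and carries $\Gamma_{D'}|_{B(z,r)^+}$ onto $\Gamma_{D''}|_{B(z,r)^+}$. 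Since, except on a further event of probability $O\big((r/\rho)^{c}\big)$, the region $U_z^{J^\cap_{z,r}-1}$ has conformal radius $O(r)$ from $z$ and the loops of $\Gamma|_{B(z,r)^+}$ stay on the scale of $B(z,r)$ — by Lemma~\ref{lem::firsthitting} applied to the $T_k$ together with the Koebe quarter theorem — the Koebe distortion theorem gives $|\varphi'-1|=O\big(\sqrt{r/\rho}\big)=O\big((r/\dist(z,\partial D'))^{c}\big)$ there.

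To finish, fix an exhaustion, say $D_n=B(0,2^n)$. The two-domain bound shows that for every $z$ and $r$ the laws of $\Gamma_{D_n}|_{B(z,r)^+}$ converge, and the couplings can be chained so that the $\Gamma_{D_n}$ converge almost surely; since the restrictions $\Gamma|_{B(z,r)^+}$ are consistent under shrinking $B(z,r)$ and their union over rational $z,r$ exhausts $\C$, the limit is a well-defined random nested noncrossing loop configuration in $\C$. Its law, full-plane $\CLE_\kappa$, is the unique measure to which $\CLE_\kappa$'s on large domains converge in the stated sense, and the two-domain bound with $D''=D_n$, $n\to\infty$, yields the asserted coupling with $\CLE_\kappa$ on an arbitrary $D$. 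Invariance under a similarity $T$ of $\C$ is immediate from the conformal invariance of $\CLE_\kappa$: $T(\Gamma_{D_n})$ is a $\CLE_\kappa$ on $T(D_n)$, and $(T(D_n))_n$ is again an exhaustion of $\C$, so $T(\Gamma_{D_n})$ and $\Gamma_{T(D_n)}$ have the same limit. The main obstacle is the passage from the one-dimensional coalescence of conformal radii supplied by Lemma~\ref{lem::coalesce} to a genuine geometric coupling of the two loop ensembles near $B(z,r)$ together with the distortion estimate; concretely, one must check that the conformal-radius increment $T_1$ has an absolutely continuous law (so that Lemma~\ref{lem::coalesce} applies) and control the extent of the innermost loop surrounding $B(z,r)$ so that the Koebe estimate delivers low distortion throughout $\Gamma_\C|_{B(z,r)^+}$.
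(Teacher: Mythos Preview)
Your outline follows essentially the same strategy as the paper: couple the log-conformal-radius renewal processes via Lemma~\ref{lem::coalesce}, transport one CLE to the other by the conformal map between the coalescence domains, and pass to the full-plane limit along an exhaustion. The structure is right and the invariance/uniqueness arguments are fine.

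There is, however, a genuine gap at the sentence ``and carries $\Gamma_{D'}|_{B(z,r)^+}$ onto $\Gamma_{D''}|_{B(z,r)^+}$.'' The map $\varphi$ carries $\Gamma_{D'}$ restricted to $U_z^{k'}$ to $\Gamma_{D''}$ restricted to $U_z^{k''}$, but $\Gamma|_{B(z,r)^+}$ is by definition the restriction to $U_z^{J^\cap_{z,r}-1}$, and nothing you have said rules out that some loop $\CL$ of $\Gamma_{D'}$ surrounds $B(z,r)$ while $\varphi(\CL)$ merely intersects it (or vice versa). This happens precisely when a loop has inradius within $O\big((r/\dist(z,\partial D'))^{\alpha}\big)$ of $r$, and you need to show that this is itself an event of probability $O\big((r/\dist(z,\partial D'))^{\alpha'}\big)$. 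That is not obvious: one only knows a priori that the expected number of loops with inradius in any unit interval of log-scale is bounded, which gives nothing useful here. The paper handles this by a randomization trick: it introduces a third domain $D_3=e^u D_1$ with $u$ uniform on $(0,1)$, observes that $e^u\Gamma_1$ trivially has no loop with inradius exponentially close to $1$ except with exponentially small probability, and then couples $\Gamma_1$ back to the CLE on $D_3$ to transfer this property to $\Gamma_1$ itself. Your closing caveat about ``controlling the extent of the innermost loop surrounding $B(z,r)$'' is a different (and more easily handled) issue; the boundary-matching problem above is the step you are actually missing.
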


For $\kappa\leq 4$ Kemppainen and Werner showed that full-plane $\CLE_\kappa$
is also invariant under the map $z\mapsto 1/z$ \cite{kemppainen-werner}.

\begin{proof}
  We first prove for that $x>0$, the stated estimates hold for $z=0$ and
  $r=1$, with $\C$ and $D$ replaced by any two proper simply connected
  domains $D_1$ and $D_2$ which both contain the ball $B(0,e^x)$.

  For $i\in\{1,2\}$, let $\Gamma_i$ denote the $\CLE_\kappa$ on $D_i$.  Let
  $\pp^{(i)}_j=-\log\confrad(0,\CL_0^j(\Gamma_i))$.  Note that
  $\pp^{(i)}_0\leq-x$ for $i\in\{1,2\}$. Furthermore,
  $\{\pp^{(i)}_{j+1}-\pp^{(i)}_j\}_{j\in\N}$ is an i.i.d.\ positive
  sequence whose terms have a continuous distribution with exponential
  tails \cite{SSW}. Therefore, by Lemma~\ref{lem::coalesce}, there is
  a stationary point process $\pp^{(0)}$ on $\R$ with i.i.d.\ increments
  from this same distribution, and the sequences $\pp^{(1)}$ and
  $\pp^{(2)}$ can be coupled to $\pp^{(0)}$ so that
  $\pp^{(i)}\cap(-\frac34 x,\infty) = \pp^{(0)}\cap(-\frac34 x,\infty)$ for
  $i\in\{1,2\}$, except with probability exponentially small in $x$.

  Let
  \begin{equation} \label{eq::defa}
    a\colonequals \min\left(\pp^{(0)}\cap\left(-\frac34x,\infty\right)\right).
  \end{equation}
By Lemma~\ref{lem::firsthitting},
$a\in(-\frac34 x,-\frac12 x)$ except with probability exponentially small in $x$.

We shall couple the two $\CLE_\kappa$ processes $\Gamma_1$ and $\Gamma_2$
as follows.  First we generate the random point process $\pp^{(0)}$.  Then
we sample the negative log conformal radii of the loops of $\Gamma_1$ and
$\Gamma_2$ surrounding $0$, so as to maximize the probability that these
coincide with $\pp^{(0)}$ on $(-\frac34x,\infty)$.  If either $\pp^{(1)}$
or $\pp^{(2)}$ does not coincide with $\pp^{(0)}$ on $(-\frac34x,\infty)$,
then we may complete the construction of $\Gamma_1$ and $\Gamma_2$
independently. Otherwise, we construct $\Gamma_1$ and $\Gamma_2$ up to and
including the loop with conformal radius $e^{-a}$, where $a$ is defined in
\eqref{eq::defa}. Let $\CL_a^{(i)}$ denote the loop of $\Gamma_i$ whose
negative log conformal radius (seen from $0$) is $a$, and let $U_a^{(i)}$
denote the connected component of $\C\setminus\CL_a^{(i)}$ containing $0$.
Then we sample a random $\CLE_\kappa$ $\Gamma_\D$ of the unit disk $\D$
which is independent of $a$ and the portions of $\Gamma_1$ and $\Gamma_2$
constructed thus far.  (We can either take $\Gamma_\D$ to be independent of
$\pp^{(0)}$, or so that the negative log conformal radii of $\Gamma_\D$'s
loops surrounding $0$ coincide with $(\pp^{(0)}-a)\cap(0,\infty)$.)  Then
we let $\psi^{(i)}$ be the conformal map from $\D$ to $U_a^{(i)}$ with
  $\psi^{(i)}(0)=0$ and $(\psi^{(i)})'(0)>0$, and set the restriction of
$\Gamma_i$ to $U_a^{(i)}$ to be $\psi^{(i)}(\Gamma_\D)$.  If there are any
bounded connected components of $\C\setminus\CL_a^{(i)}$ other than
$U_a^{(i)}$, then we generate the restriction of $\Gamma_i$ to these
components independently of everything else generated thus far.  The
resulting loop processes $\Gamma_i$ are distributed according to the
conformal loop ensemble on $D_i$, and have been coupled to be similar near
$0$.

  Let $\psi = \psi^{(2)} \circ (\psi^{(1)})^{-1}$ be the conformal map
  from $U_a^{(1)}$ to $U_a^{(2)}$ for which $\psi(0)=0$ and
  $\psi'(0)=1$.  Assuming
  $a \in (-\frac34 x,-\frac12 x)$ and $a\in\lambda^{(1)}$ and $a\in\lambda^{(2)}$,
   the Koebe distortion theorem implies that on $B(0,e^{x/4})$, $|\psi'-1|$ is
  exponentially small in $x$.

By \cite[Lemma~\ref{I-lem::annulus-loop}]{extremes}, except with probability exponentially small in $x$,
both $\Gamma_1$ and $\Gamma_2$ contain a loop surrounding $B(0,1)$ which is
contained in $B(0,e^{x/4})$.
  It is possible that $\psi$ maps a loop of $\Gamma_1$ surrounding $\D$
  to a loop of $\Gamma_2$ intersecting $\D$ or vice versa.  But since
  $\psi$ has exponentially low distortion, any such loop would have to have
  inradius exponentially close to~$1$.  The expected number of loops of
  $\Gamma_1$ with negative log conformal radius between $-\log4$ and $1$ is
  bounded by a constant, so by the Koebe quarter theorem, the expected
  number of loops of $\Gamma_1$ with inradius between $1/e$ and $1$ is
  bounded by a constant.  Let $D_3 = e^u D_1$ be a third domain, where $u$
  is independent of everything else and uniformly distributed on $(0,1)$.
  It is evident that $e^u \Gamma_1$ has no loop with inradius exponentially
  (in $x$) close to~$1$, except with probability exponentially in $x$.  On
  the other hand, we can couple a $\CLE_\kappa$ on $D_3$ to $\Gamma_1$ in
  the same manner that we did for domain~$D_2$, and deduce that $\Gamma_1$
  must also have no loop with inradius exponentially close to~$1$, except
  with probability exponentially small in $x$.
We conclude that it is exponentially unlikely
  for there to be a loop of $\Gamma_1$ surrounding $B(0,1)$ which $\psi$
  maps to a loop of $\Gamma_2$ intersecting $B(0,1)$ or vice versa.  Thus
  $\psi(\Gamma_1|_{B(z,1)^+})=\Gamma_2|_{B(z,1)^+}$,
  except with probability exponentially small in $x$.

  The corresponding estimates for general $z$ and $r$ and domains
  $D_1$ and $D_2$ containing $B(z,r)$ follows from the conformal
  invariance of $\CLE_\kappa$.

  Given the above estimates for any two proper simply connected domains
  which contain a sufficiently large ball around the origin, it is not
  difficult to take a limit.  For some sufficiently large constant $k_0$
  (which depends on $\kappa$), we let $\Gamma_k$ be a $\CLE_\kappa$ in the
  domain~$B(0,e^k)$, where $k\geq k_0$ is an integer.  For each $k$, we
  couple $\Gamma_{k+1}$ and $\Gamma_k$ as described above.  With
  probability $1$ all but finitely many of the couplings have that
  $\Gamma_{k+1}|_{B(0,e^{k/2})^+}=\psi_k(\Gamma_k|_{B(0,e^{k/2})^+})$
  for a low-distortion conformal map $\psi_k$, so suppose that this is
  the case for all $k\geq\ell$.  The conformal maps $\psi_k$ (for
  $k\geq\ell$) approach the identity map sufficiently rapidly that for each
  $m\geq\ell$, the infinite composition $\cdots\circ\psi_{m+1}\circ\psi_m$
  is well defined and converges uniformly on compact subsets to a limiting
  conformal map with distortion exponentially small in $m$.  We define
  $\Gamma_\C|_{B(0,e^{m/2})^+}$ to be the image of $\Gamma_m|_{B(0,e^{m/2})^+}$
  under this infinite composition.  These satisfy the consistency condition
  $\Gamma_\C|_{B(0,\exp(m_1/2))^+} \subseteq \Gamma_\C|_{B(0,\exp(m_2/2))^+}$ for
  $m_2\geq m_1\geq\ell$, so then we define $\Gamma_\C = \bigcup_{m\geq\ell}
  \Gamma_\C|_{B(0,e^{m/2})^+}$.  For any other proper simply connected domain
  $D$ containing a sufficiently large ball around the origin, we couple
  $\Gamma_D$ to $\Gamma_{\lfloor \log\dist(0,\partial D)\rfloor}$ as
  described above, and with high probability it will be close to
  $\Gamma_\C$ in the sense described in the theorem.

  It is evident from this construction of $\Gamma_\C$ that it is
  rotationally invariant around $0$.  Next we check that $\Gamma_\C$ is
  invariant under transformations of the form $z\mapsto A z +C$ where
  $A,C\in\C$ and $A\neq 0$.  Note that $A \Gamma_\C+C$ restricted to a ball
  $B(0,r)$ is arbitrarily well approximated by CLE on $B(C,A 2^k)$ for
  sufficiently large $k$.  But by the coupling for simply connected proper
  domains, the CLEs on $B(C,A 2^k)$ and $B(0,2^k)$ restricted to $B(0,r)$
  approximate each other arbitrarily well for sufficiently large $k$, and
  by construction, $\Gamma_\C$ restricted to $B(0,r)$ is arbitrarily well
  approximated by CLE on $B(0,2^k)$ restricted to $B(0,r)$ when $k$ is
  sufficiently large.  Thus full-plane CLE is invariant under affine
  transformations.

\enlargethispage{24pt}
  Finally, if there were more than one loop measure that approximates CLE
  on simply connected proper domains in the sense of the theorem, then for
  a sufficiently large ball the measures would be different within the
  ball. Since for some sufficiently large proper simply connected domain
  $D$, CLE on $D$ restricted to the ball would be well-approximated by both
  measures, we conclude that full-plane CLE is unique.
\end{proof}

\section*{Notation}
\label{sec::notation}

\makeatletter{}We use the following notation.

\begin{itemize}
\item $D$ is a simply connected proper domain in $\C$, i.e.\ $\varnothing\subsetneq D\subsetneq\C$ (p. \pageref{not::domD})
\item $\Gamma$ denotes a $\CLE_\kappa$ process on $D$
  (p. \pageref{not::CLE})
\item $\Loopcount_z(\eps)$ is the number of loops of $\Gamma$ which
  surround $B(z,\eps)$ (p.\ \pageref{not::Loopcount})
\item $\Loop_z^j$ is the $j$th loop of $\Gamma$ which surrounds $z$
  (p. \pageref{not::loop})
\item $U_z^j$ is the connected component of $D \setminus \Loop_z^j$ which
  contains $z$ (p. \pageref{not::loop})
\item $\SLoopcount_z(\eps)$ is the sum of the loop weights over the loops
  of $\Gamma$ which surround $B(z,\eps)$
  (\eqref{eqn::normalized_loop_count} on p.\
  \pageref{eqn::normalized_loop_count})
\item $\mu$ is the weight distribution on the loops (p.\ \pageref{not::mu})
\item $G_D(z,w)$ is the Green's function for the Dirichlet Laplacian
  $-\Delta$ on $D$ (p. \pageref{not::greens})
\item  $h_z(\eps) = \SLoopcount_z(\eps) - \E[ \SLoopcount_z(\eps) ]$ (p.\ \pageref{not::h_eps})
\item $\Loopcount_{z,w}(\eps)$ is the number of loops of $\Gamma$ which
  surround both $B(z,\eps)$ and $B(w,\eps)$ (p.\ \pageref{not::Loopcount_zw})
\item $J_{z,r}^\cap$ is the index of the first loop of $\Loop_z$ which
  intersects $B(z,r)$ (\eqref{eq::Jcapsubset} on p.\ \pageref{not::Jcapsubset}).
\item $J_{z,r}^\subset$ is the index of the first loop of $\Loop_z$ which
  is contained in $B(z,r)$ (\eqref{eq::Jcapsubset} on p.\
  \pageref{not::Jcapsubset}).
\item $\Gamma_z(\eps)$ is the set of loops of $\Gamma$ which surround
  $B(z,\eps)$ (p.\ \pageref{not::Gamma_z})
\item $\rc{X}=X-\E[X]$ for any integrable random variable $X$ (p.\
  \pageref{eqn::loopcount_bar_def})
\item $G_D^{\kappa,\eps}(z,w)$ is the expected number of CLE loops
  surrounding $z$ and $w$ but not neither $B(z,\eps)$ nor $B(w,\eps)$ (p.\
  \pageref{not::GDke})
\end{itemize}

\section*{Acknowledgements}

Both JM and SSW thank the hospitality of the Theory Group at Microsoft Research, where part of the research for this work was completed.  JM's work was partially supported by DMS-1204894 and SSW's work was partially supported by an NSF Graduate Research Fellowship, award No.~1122374.

\makeatletter
\def\@rst #1 #2other{#1}
\renewcommand\MR[1]{\relax\ifhmode\unskip\spacefactor3000 \space\fi
  \MRhref{\expandafter\@rst #1 other}{#1}}
\newcommand{\MRhref}[2]{\href{http://www.ams.org/mathscinet-getitem?mr=#1}{MR#1}}
\makeatother


\begin{thebibliography}{MWW14}

\bibitem[BC12]{ban-crainic}
E.~v.~d. Ban and M.~Crainic.
\newblock {\textit{Analysis on Manifolds: lecture notes for the 2009/2010
  Master Class}}, 2012.
\newblock \url{http://www.staff.science.uu.nl/~crain101/AS-2013/main.pdf}.

\bibitem[BDC12]{BDC}
V.~Beffara and H.~Duminil-Copin.
\newblock The self-dual point of the two-dimensional random-cluster model is
  critical for {$q\geq 1$}.
\newblock {\em Probab.\ Theory Related Fields},
  \href{http://dx.doi.org/10.1007/s00440-011-0353-8}{153(3-4):511--542}, 2012.
\newblock \arXiv{1006.5073}. \MR{2948685}

\bibitem[DPRZ01]{DPRZ}
A.~Dembo, Y.~Peres, J.~Rosen, and O.~Zeitouni.
\newblock Thick points for planar {B}rownian motion and the {E}rd{\H
  o}s-{T}aylor conjecture on random walk.
\newblock {\em Acta Math.},
  \href{http://dx.doi.org/10.1007/BF02401841}{186(2):239--270}, 2001.
\newblock \arxiv{math/0107191}. \MR{1846031 (2002k:60106)}

\bibitem[FK72]{FK}
C.~M. Fortuin and P.~W. Kasteleyn.
\newblock On the random-cluster model. {I}. {I}ntroduction and relation to
  other models.
\newblock {\em Physica}, 57:536--564, 1972. \MR{0359655 (50 \#12107)}

\bibitem[Gut09]{MR2489436}
A.~Gut.
\newblock {\em Stopped random walks}.
\newblock Springer Series in Operations Research and Financial Engineering.
  Springer, second edition, 2009. \MR{2489436 (2010f:60131)}

\bibitem[KN04]{KN04}
W.~Kager and B.~Nienhuis.
\newblock A guide to stochastic {L}\"owner evolution and its applications.
\newblock {\em J. Statist.\ Phys.},
  \href{http://dx.doi.org/10.1023/B:JOSS.0000028058.87266.be}{115(5-6):1149--1229},
  2004.
\newblock \arxiv{math-ph/0312056}. \MR{2065722 (2005f:82037)}

\bibitem[KW14]{kemppainen-werner}
A.~Kemppainen and W.~Werner.
\newblock The nested simple conformal loop ensembles in the {R}iemann sphere.
\newblock 2014.
\newblock \arXiv{1402.2433}.

\bibitem[Law05]{LAW05}
G.~F. Lawler.
\newblock {\em Conformally invariant processes in the plane}.
\newblock Mathematical Surveys and Monographs \#114. American Mathematical
  Society, 2005. \MR{2129588 (2006i:60003)}

\bibitem[MS14]{MS_CLE}
J.~Miller and S.~Sheffield.
\newblock {CLE}(4) and the {Gaussian} free field.
\newblock 2014.

\bibitem[MWW14]{extremes}
J.~Miller, S.~S. Watson, and D.~Wilson.
\newblock Extreme nesting in the conformal loop ensemble.
\newblock {\em Ann.\ Probab.}, to appear, 2014.
\newblock \arXiv{1401.0217}.

\bibitem[RS05]{RS05}
S.~Rohde and O.~Schramm.
\newblock Basic properties of {SLE}.
\newblock {\em Ann.\ of Math.\ (2)},
  \href{http://dx.doi.org/10.4007/annals.2005.161.883}{161(2):883--924}, 2005.
\newblock \arxiv{math/0106036}. \MR{2153402 (2006f:60093)}

\bibitem[She09]{SHE_CLE}
S.~Sheffield.
\newblock Exploration trees and conformal loop ensembles.
\newblock {\em Duke Math.\ J.},
  \href{http://dx.doi.org/10.1215/00127094-2009-007}{147(1):79--129}, 2009.
\newblock \arxiv{math/0609167}. \MR{2494457 (2010g:60184)}

\bibitem[SSW09]{SSW}
O.~Schramm, S.~Sheffield, and D.~B. Wilson.
\newblock Conformal radii for conformal loop ensembles.
\newblock {\em Comm.\ Math.\ Phys.},
  \href{http://dx.doi.org/10.1007/s00220-009-0731-6}{288(1):43--53}, 2009.
\newblock \arxiv{math/0611687}. \MR{2491617 (2010g:60218)}

\bibitem[SW12]{SW_CLE}
S.~Sheffield and W.~Werner.
\newblock Conformal loop ensembles: the {M}arkovian characterization and the
  loop-soup construction.
\newblock {\em Ann.\ of Math.\ (2)},
  \href{http://dx.doi.org/10.4007/annals.2012.176.3.8}{176(3):1827--1917},
  2012.
\newblock \arXiv{1006.2374}. \MR{2979861}

\bibitem[Tao10]{tao-epsilon}
T.~Tao.
\newblock {\em An epsilon of room, {I}: {R}eal analysis}.
\newblock Graduate Studies in Mathematics \#117. American Mathematical Society,
  2010.
\newblock Pages from year three of a mathematical blog. \MR{2760403
  (2012b:42002)}

\bibitem[Tay11]{TAYLOR_PDE}
M.~E. Taylor.
\newblock {\em Partial differential equations {I}. {B}asic theory}.
\newblock Applied Mathematical Sciences \#115. Springer, second edition, 2011.
  \MR{2744150 (2011m:35001)}

\bibitem[Wil91]{williams1991probability}
D.~Williams.
\newblock {\em Probability with {M}artingales}.
\newblock Cambridge Mathematical Textbooks. Cambridge University Press, 1991.
  \MR{1155402 (93d:60002)}

\end{thebibliography}

\end{document}